\documentclass[pdflatex,sn-mathphys]{sn-jnl}% Math and Physical Sciences Numbered Reference Style
\usepackage{graphicx}%
\usepackage{multirow}%
\usepackage{amsmath,amssymb,amsfonts}%
\usepackage{amsthm}%
\usepackage{mathrsfs}%
\usepackage[title]{appendix}%
\usepackage{xcolor}%
\usepackage{textcomp}%
\usepackage{manyfoot}%
\usepackage{booktabs}%
\usepackage{algorithm}%
\usepackage{algorithmicx}%
\usepackage{algpseudocode}%
\usepackage{listings}%
\theoremstyle{thmstyleone}%
\newtheorem{theorem}{Theorem}%  meant for continuous numbers
\newtheorem{proposition}[theorem]{Proposition}% 

\newtheorem{corollary}{Corollary}

\theoremstyle{thmstyletwo}%
\newtheorem{remark}{Remark}%

\theoremstyle{thmstylethree}%
\newtheorem{definition}{Definition}%

\begin{document}

\title[On higher-order differential equations for Laguerre-Hahn orthogonal polynomials]{On higher-order differential equations for Laguerre-Hahn orthogonal polynomials}

%%=============================================================%%
%% GivenName	-> \fnm{Joergen W.}
%% Particle	-> \spfx{van der} -> surname prefix
%% FamilyName	-> \sur{Ploeg}
%% Suffix	-> \sfx{IV}
%% \author*[1,2]{\fnm{Joergen W.} \spfx{van der} \sur{Ploeg} 
%%  \sfx{IV}}\email{iauthor@gmail.com}
%%=============================================================%%

\author[1]{\fnm{Mohamed} \sur{Khalfallah}}\email{mohamed.khalfallah@fsm.rnu.tn}

\author*[2]{\fnm{Zélia} \sur{da Rocha}}\email{mrdioh@fc.up.pt}

\affil[1]{\orgdiv{Department of Mathematics}, 
\orgname{Faculty of Sciences of Monastir, University of Monastir}, 
\orgaddress{
%\street{}, 
\city{Monastir}, 
\postcode{5019}, 
%\state{}, 
\country{Tunisia}}}

\affil[2]{\orgdiv{Departamento de Matemática, Centro de Matemática da Universidade do Porto (CMUP)}, 
\orgname{Faculdade de Ciências da Universidade do Porto}, 
\orgaddress{
\street{Rua do Campo Alegre n. 687}, 
\city{Porto}, 
\postcode{4169-007}, 
%\state{}, 
\country{Portugal}}}

%%==================================%%
%% Sample for unstructured abstract %%
%%==================================%%

\abstract{In this work, we present a constructive method to derive homogeneous linear differential equations of arbitrary order for Laguerre--Hahn orthogonal polynomials. The approach is based on a hierarchy of structure relations, built recursively from the fundamental structure relation of the Laguerre--Hahn forms. The fourth-order differential equation, which has been extensively studied in the literature, is naturally recovered as a particular case of this general framework. Special attention is devoted to the semiclassical case. The method is fully algorithmic and has been implemented in {\it Mathematica$^{\circledR}$}. We apply it to two Laguerre--Hahn families of class zero analogous to Hermite, providing explicit structure relations and differential equations of orders five and ten. We also examine the classical Hermite sequence, which is recovered as a limiting case of the first family.}

\keywords{Orthogonal polynomials, Laguerre-Hahn forms, higher-order differential equations, structure relations, Hermite polynomials, algorithms, symbolic computations}

%%\pacs[JEL Classification]{D8, H51}

\pacs[MSC 2020 Classification]{34, 33C45, 33D45, 42C05, 33F10, 68W30, 62-09, 33F05, 65D20, 68-04}

\maketitle

%%% ----------------------------------------------------------------------
%\maketitle
%%% ----------------------------------------------------------------------
%\tableofcontents%%%%%%%%%%%%%%%%%%%%%%%%%%%%%%%%
%\tableofcontents

%\newpage
%%%%%%%%%%%%%%%%%%%%%%%%%%%%%%%%%%%
\section{Introduction}
%%%%%%%%%%%%%%%%%%%%%%%%%%%%%%%%%%%

This work is a direct continuation and extension of our recent article \cite{Article-1-NA}, in which we developed a constructive method for deriving the fourth-order homogeneous linear differential equation satisfied by any Laguerre--Hahn orthogonal polynomial sequence.\\

The investigation of orthogonal polynomial sequences \(\{P_n\}_{n\ge 0}\) satisfying linear differential equations with polynomial coefficients of bounded degrees is a classical topic in the theory of special functions, with deep connections to spectral analysis, operator theory, and approximation theory \cite{Buendia-1988}. We consider differential equations of the form
\[
\sum_{k=0}^{N} f_k(x; n) \, y^{(k)}(x) = 0,
\]
where the coefficients \(f_k\) are polynomials in \(x\), possibly depending on the degree parameter \(n\), and \(N\) denotes the order of the equation. Among the most celebrated results in this area is the characterization of the classical families—Hermite, Laguerre, Jacobi, and Bessel—by second-order differential equations of the type
\[
\sigma(x) y''(x) + \tau(x) y'(x) + \lambda_n y(x) = 0,
\]
where \(\sigma\) and \(\tau\) are fixed polynomials with \(\deg \sigma \le 2\) and \(\deg \tau = 1\), and \(\lambda_n\) is a spectral parameter depending linearly on \(n\). This fundamental result goes back to Bochner \cite{Bochner-1929} and Hahn \cite{Hahn-1978, Hahn-1983}, and it completely classifies orthogonal polynomial sequences
satisfying a second-order differential equation with polynomial coefficients.\\

A natural and challenging extension of this theory is the study of orthogonal polynomials satisfying higher-order differential equations. For instance, it was proved in \cite{KWON-1996,Loureiro-2006} that an orthogonal polynomial sequence is classical if and only if it fulfils a certain differential equation of order $2k$, for $k\geq 1$.
It is worth mentioning that it was established in \cite{Hahn-1978} that whenever an orthogonal polynomial sequence satisfies a differential equation with polynomial coefficients, the order can always be reduced to the minimal cases \(N=2\) or \(N=4\). The second-order case being fully understood, the fourth-order case becomes the next frontier. This led to the introduction of the Laguerre--Hahn class, a broad family of orthogonal polynomials defined by means of a Riccati equation satisfied by the formal Stieltjes function $S$ \cite{Alaya-these-1996, Alaya-Maroni,Dini-these-1988,Dzoumba-these-1985,Magnus-1983, Maroni-1983, Maroni-1991},
\begin{equation}\label{SReq}
\Phi S' = B S^2 + C S + D, \qquad \Phi \not\neq 0,
\end{equation}
where \(\Phi, B, C, D\) are polynomials. This class was extensively developed by Maroni and his collaborators
 through a detailed algebraic and topological formalism  \cite{Alaya-these-1996,Alaya-Maroni,Bouakkaz-these,Bouakkaz-Maroni-1991,Dini-these-1988,Dzoumba-these-1985,Maroni-1991}. It occupies a prominent position in the theory of orthogonal polynomials, as it encompasses not only the classical and semiclassical families, but also many sequences obtained by association \cite{Askey-1984,Belmehdi-Ronveaux-1991,Bustoz-1982}, and by perturbations, such as co-recursive, co-dilated, and co-modified polynomials \cite{Letessier-1994,Ronveaux-1990,Wimp-1987}. Despite its richness and the variety of known examples, the general classification of Laguerre--Hahn orthogonal polynomials remains largely open.\\

For strict Laguerre--Hahn polynomials, i.e., non-semiclassical ones, the minimal order of a differential equation is four. 
However,  in contrast with the second-order case, such differential equations do not yet yield a complete characterization of the Laguerre-Hahn class, since the converse implication is still open in general. It is conjectured that every orthogonal polynomial sequence satisfying a fourth-order differential equation with polynomial coefficients of bounded degrees necessarily belongs to the Laguerre-Hahn class (see \cite{Magnus-1983}, \cite[Sect.III]{brezinski1985polynomes}).

Several representations of this fourth-order equation have been given over the years: as a determinant of order five in \cite{Dzoumba-these-1985}, as a determinant of order three in \cite{Dini-these-1988}, and in explicit polynomial form in \cite{Bouakkaz-these}. In our recent work \cite{Article-1-NA}, we developed a constructive method based on four structure relations and algebraic eliminations, yielding the fourth-order equation as a determinant of order four. This approach combines a compact representation of the differential equation with a systematic constructive procedure, leading to the symbolic algorithm  {\it 4oDELH} implemented in {\it Mathematica$^{\circledR}$}. This algorithm was applied to class-zero Laguerre--Hahn families analogous to Hermite, providing the explicit fourth-order equations for these families for the first time \cite{Article-1-NA}. 
Subsequently, in \cite{Article-2-Soummi}, a complete description of the class-zero Laguerre--Hahn families was established, updating and correcting the one previously presented in \cite{Bouakkaz-Maroni-1991}. Later, in \cite{Article-3-Soummi}, we developed a recursive procedure to compute the moments of all these canonical class-zero families.\\

The present paper addresses a natural question that has not been explored before: \emph{can the method of \cite{Article-1-NA} be extended to yield differential equations of arbitrary order \(N \ge 4\)?} We answer this question affirmatively. In doing so, we uncover a recursive structure that underlies the theory of Laguerre--Hahn orthogonal polynomials, revealing that the fourth-order equation is but the first step in a hierarchy of differential equations of all orders.\\

The main contribution of this work is the construction, for each integer \(i \ge 1\), of a hierarchy of structure relations of the form
\[
G_{0,i}(x;n) P_{n-1}^{(1)}(x) + G_{1,i}(x;n) P_n^{(1)}(x) + H_i(x;n) P_n(x)
= \sum_{k=0}^{i} M_{k,i}(x;n) P_{n+1}^{[k]}(x),
\]
where \(\{P_n^{(1)}\}_{n\ge 0}\) is the associated polynomial sequence of order one of the monic orthogonal sequence \(\{P_n\}_{n\ge 0}\), and \(P_{n+1}^{[k]}\) denotes the \(k\)-th derivative of \(P_{n+1}\). The coefficients are computed recursively from the data of the Stieltjes equation of Riccati type \eqref{SReq} and the recurrence coefficients of the sequence; this is stated in Theorem~\ref{th:0.1}. From this hierarchy, by selecting four relations with indices $i_1$, $i_2$,  $i_3$, and  $i_4$, such that \(1\leq i_1 < i_2 < i_3 < i_4 = N\), we derive a homogeneous linear differential equation of order \(N\). This result is stated in Theorem~\ref{th:0.2}.\\

This general framework yields several new results. For \(N=4\), with the natural choice \((i_1,i_2,i_3,i_4)=(1,2,3,4)\), we recover the fourth-order equation established in \cite{Article-1-NA}, thus validating the extension. In the semiclassical setting, corresponding to $B=0$ in equation \eqref{SReq}, the recurrence simplifies and yields $N-1$ distinct differential equations of arbitrary order \(N \ge 2\). For \(N>4\), the method offers multiple choices of indices, leading, for example, to four distinct equations of order  \(N=5\).

The method is fully algorithmic and has been implemented in {\it Mathematica$^{\circledR}$} as an extension of our previous algorithm, named {\it 4oDELH}; the new implementation is called {\it HoDELH} ({\it higher-order differential equation for Laguerre--Hahn}). The implementation allows the automatic computation of structure relations and differential equations of any prescribed order, together with the reduction of common factors in the coefficients of differential equations.\\

To illustrate the effectiveness of our approach, we apply it to the two Laguerre--Hahn families of class zero analogous to Hermite, for which we provide explicit structure relations and differential equations of prescribed orders. The classical Hermite sequence is recovered as a limiting case of the first family, and further results are also presented.\\

The paper is organized as follows. In Section~\ref{Section2}, we recall the necessary background on orthogonal polynomials, moment functionals, and Laguerre--Hahn forms. Section~\ref{Section3} contains the main theoretical results: the recurrence theorem for structure relations (Theorem~\ref{th:0.1}) and the general differential equation of order \(N\) (Theorem~\ref{th:0.2}). The semiclassical case is treated in Subsection~\ref{Section3.1}. In Section~\ref{Section4}, we present the algorithm {\it HoDELH}. Section~\ref{Section5} is devoted to applications to the Laguerre–
Hahn families of class zero analogous to Hermite, with explicit results for orders $N=5$ and $N=10$. Special attention is given to the classical Hermite case, where new results are presented. The final section contains some conclusions.

%%%%%%%%%%%%%%%%%%%%%%%%%%%%%%%%%%%
\section{Notation and basic background}\label{Section2} 
%%%%%%%%%%%%%%%%%%%%%%%%%%%%%%%%%%%

In this section, we present some basic definitions, notations, and results that are used throughout this paper.
%%%%%%%%%%%%%%%%%%%%%%%%%%%%%%%%%%%%%%%%%%%%%%%%%%%%%%%%%
\subsection{Basic tools}
Let $\mathcal{P}$ denote the vector space of polynomials with complex coefficients, and let $\mathcal{P'}$ be its algebraic dual space.  
The elements of $\mathcal{P'}$ will be referred to as \emph{forms} (or linear functionals).  
The pairing between $\mathcal{P}$ and $\mathcal{P'}$ is expressed through the duality brackets $\langle \cdot, \cdot \rangle$.  
For a form $u \in \mathcal{P'}$, the sequence of complex numbers $(u)_n,~ n \geq 0$, is called the \emph{moment sequence} of $u$ relative to the monomial basis $\{x^n\}_{n \geq 0}$.  
In particular, the $n$-th moment is given by $(u)_n := \langle u, x^n \rangle$,  so that $u$ is uniquely determined by the sequence of its moments.  

In the following, we shall refer to a sequence $\{P_n\}_{n \geq 0}$ as a \emph{polynomial sequence} (PS) if $\deg P_n = n$ for all $n \geq 0$. A \emph{monic polynomial sequence} (MPS) is a PS in which each polynomial has a leading coefficient equal to one.  
If ${\{P_{n}\}}_{n \geq 0}$ is a MPS, there exists a unique sequence
$\{u_n\}_{n\geq 0}$, $u_n\in\mathcal{P}^{\prime}$, called the dual sequence of $\{P_{n}\}_{n\geq 0}$, such that,
\begin{equation}\label{SucDual}
\langle u_{n},P_{m}\rangle=\delta_{n,m}, \quad n,m\geq 0.
\end{equation}
We say that a sequence of forms $\{v_n \}_{ n \geq 0}$ is normalised if and only if $\left( v_n  \right)_n = 1, $ $n \geq 0$, and if $n \geq 1$, then $\left( v_n  \right)_m = 0, $ $m=0, \ldots, n-1$. Thus, the dual sequence  $\{u_n\}_{n\geq 0}$ is normalised. The first form $u_0$ is called the canonical form of ${\{P_{n}\}}_{n \geq 0}$.

We now introduce some operations on $\mathcal{P'}$ following \cite{Maroni-1991}.  
For $c \in \mathbb{C},~ f,p \in \mathcal{P}$, and $u \in \mathcal{P'}$, we define
\begin{align*}
&\langle fu, p \rangle = \langle u, fp \rangle, \quad
\langle u', p \rangle = -\langle u, p' \rangle,  \\
&\langle (x-c)^{-1}u, p \rangle = \langle u, \theta_c p \rangle
= \left\langle u, \frac{p(x)-p(c)}{x-c} \right\rangle.
\end{align*}
Given $f \in \mathcal{P}$ and $u \in \mathcal{P'}$, the product $uf$ is defined by  
$$(u f)(x) := \left\langle u, \displaystyle\frac{x f(x)-\zeta f(\zeta)}{x-\zeta} \right\rangle .$$\\
This definition allows us to introduce the \emph{Cauchy product} of two forms $u,v \in \mathcal{P'}$ by 
\[
\langle uv, f \rangle := \langle u, v f \rangle, \quad f \in \mathcal{P}.
\]

In addition, we make use of the formal Stieltjes function associated with 
$u \in \mathcal{P}'$, defined by  \cite{Maroni-1991}
\[
S(u)(z) := - \sum_{n \geq 0} \frac{(u)_n}{z^{n+1}},
\]
which provides an alternative representation of the moment sequence $\{(u)_n\}_{n \geq 0}$.  
Since the moments uniquely determine $u$, the function $S(u)(z)$ does so as well.

\medskip

A linear functional $u$ is called \emph{regular} (or \emph{quasi-definite}) if there exists a sequence of polynomials  $\{P_n\}_{n \geq 0}$ such that \cite{Chihara-1978}
\begin{equation*}
\langle u, P_n P_m \rangle = r_n \, \delta_{n,m}, \quad n, m \geq 0,
\end{equation*}
where $\{r_n\}_{n \geq 0}$ is a sequence of nonzero complex numbers and $\delta_{n,m}$ denotes the Kronecker symbol.
The sequence $\{P_n\}_{n\geq 0}$ is then said orthogonal with respect to $u$. 
Then, necessarily, $\{P_n\}_{n \geq 0}$ is a PS, $u=(u)_0u_0$, and $\{P_n\}_{n \geq 0}$ and $u$ can be normalized. In the sequel, we shall consider that each $P_n(x)$ is monic, and $(u)_0=1$ (i.e. $u=u_0$).
Henceforth, a monic orthogonal
polynomial sequence $\{P_n\}_{n\geq 0}$ will be indicated as MOPS. 

It is well known that an MOPS is characterized by the following second-order linear recurrence relation and initial conditions,  \cite{Chihara-1978}
\begin{eqnarray}
&& P_0(x)=1\  ,\quad  P_1(x)=x-\beta_0, \label{ic_TTRR}\\
&& P_{n+2}(x)=(x-\beta_{n+1})P_{n+1}(x)-\gamma_{n+1}P_{n}(x),~~n\geq 0, \label{TTRR}
\end{eqnarray}
being $\{\beta_n\}_{n\geq 0}$ and $\{\gamma_{n+1}\}_{n\geq 0}$ sequences of complex numbers such that $\gamma_{n+1}\neq0$ for all $ n\geq 0$.

Let  $\{P_n^{(1)}\}_{n\geq0}$ be the associated polynomial sequence of order one of the MPS  $\{ P_n\}_{n\geq0}$ with respect to the canonical form $u=u_0$. It is well known that \cite{Chihara-1978}
$$
 P_n^{(1)}(x):=(u\theta_0P_{n+1})(x)=\bigg\langle  u,\frac{P_{n+1}(x)- P_{n+1}(\xi)}{x-\xi}  \bigg\rangle.
$$
The Stieltjes function of $u^{(1)}$ is expressed in terms of that of $u$ as  \cite{Maroni-1991} 
$$
\gamma_1 S\left(u^{(1)}\right)(z)=-\frac{1}{S(u)(z)}-\left(z-\beta_0\right).
$$
More generally, the sequence of associated polynomials of order $(r+1)$, $r\geq 1$, is  defined by recursion
$$
P_n^{(r+1)}(x)=\big(  P_n^{(r)} \big)^{(1)}(x),\quad  u_n^{(r+1)}=\big(  u_n^{(r)} \big)^{(1)},\quad n, r \geq 0.
$$

If $\{ P_n\}_{n\geq0}$ is a MOPS with respect to the form $u$, then for $r\in\mathbb{N}$, the associated sequence of polynomials of order $r$,  $\left\{ P_n^{(r)}\right\}_{n\geq0}$, is also orthogonal with respect to the form $u^{(r)}$ and satisfies the following recurrence relation 
\begin{eqnarray}
&&P_0^{(r)}(x)=1, \quad P_1^{(r)}(x)=x-\beta_0^{(r)},\label{ic_ASSTTRR}\\
&&P_{n+2}^{(r)}(x)=(x-\beta_{n+1}^{(r)})P_{n+1}^{(r)}(x)-\gamma_{n+1}^{(r)}P_n^{(r)}(x),\quad n\geq0,  \label{ASSTTRR}
\end{eqnarray}
where 
\begin{equation*}
\beta_{n}^{(r)}=\beta_{n+r}, \quad \gamma_{n+1}^{(r)}=\gamma_{n+1+r},\quad n\geq0.  
\end{equation*}

We recall the definition of the $r$-perturbed sequence $\{\widetilde{P}_n\}_{n\geq 0}$, for a fixed integer $r\geq 0$, associated with a MOPS $\{P_n\}_{n\geq 0}$, as introduced in \cite{Maroni-1991}. It is a MOPS satisfying the following second-order recurrence relation 
\[
\begin{array}{l}
\widetilde{P}_0(x)=1,\quad \widetilde{P}_1(x)=x-\widetilde{\beta}_0,\\[2pt]
\widetilde{P}_{n+2}(x)=(x-\widetilde{\beta}_{n+1})\widetilde{P}_{n+1}(x)-\widetilde{\gamma}_{n+1}\widetilde{P}_n(x),\quad n\geq0,
\end{array}
\]
with
\[
\begin{aligned}
&\widetilde{\beta}_0 = \beta_0 + \mu_0,\\[2pt]
&\widetilde{\beta}_n = \beta_n + \mu_n,\quad \mu_n\in\mathbb{C};\qquad 
\widetilde{\gamma}_n = \lambda_n\gamma_n,\quad \widetilde{\gamma}_n\in\mathbb{C}\setminus\{0\},\quad 1\leq n\leq r,\\[2pt]
&\widetilde{\beta}_n = \beta_n,\quad \widetilde{\gamma}_n = \gamma_n,\quad n\geq r+1.
\end{aligned}
\]
We assume that either $\mu_r\neq 0$ or $\lambda_n\neq 1$. The so-called co-recursive case corresponds to a perturbed case of order $0$. Using the notations $\mu:=(\mu_1,\ldots,\mu_r)$, $\lambda:=(\lambda_1,\ldots,\lambda_r)$, $r\geq 1$, we write
\[
\widetilde{P}_n(x)=P_n\left(\mu_0;\,{\mu\atop\lambda}\,;r;x\right),\qquad n\geq0,
\]
and the sequence $\{\widetilde{P}_n\}_{n\geq0}$ is orthogonal with respect to the perturbed form 
$$\widetilde{u}:=u\left(\mu_0;\,{\mu\atop\lambda}\,;r\right).$$

%%%%%%%%%%%%%%%%%%%%%%%%%%%%%%%%%%%
\subsection{Laguerre-Hahn forms}
%%%%%%%%%%%%%%%%%%%%%%%%%%%%%%%%%%%

\begin{definition} \cite{Dzoumba-these-1985,Magnus-1983,Maroni-1983}
A regular form $u$, with $(u)_0=1$, is said to be a Laguerre-Hahn form if its formal Stieltjes function satisfies the Riccati equation
\begin{equation}\label{Riccati}
A(z)S'(u)(z)=B(z)S^2(u)(z)+C(z)S(u)(z)+D(z),
\end{equation}
where $A$, $B$, $C$, and $D$ are polynomials.\\
The sequence \(\{P_n\}_{n\geq 0}\) orthogonal with respect to \( u \) is also called a Laguerre-Hahn sequence. 
\end{definition}

\begin{remark} \cite{Maroni-1991}
If $A=0$ identically, the form $u$ is classified as a second-degree form. 
If $A$ is not identically zero, it may be assumed, without loss of generality, 
that it is monic; and we let $A:=\Phi$. Under this normalization, the condition $B \neq 0$ 
characterizes $u$ as a strict Laguerre-Hahn form, whereas the case $B = 0$ 
corresponds to a semiclassical form.
\end{remark}

There are several characterizations of Laguerre-Hahn forms. Some of them are listed in the following result.
\begin{proposition} \cite{Alaya-Maroni,Bouakkaz-Maroni-1991,Dini-these-1988,Maroni-1991} 
Let $u$ be a regular and normalized form, i.e.,
 $(u)_0=1$, and let $\{P_n\}_{n\geq 0}$ be its corresponding MOPS. The following statements are equivalent
\begin{enumerate}
\item[(i)] $u$ is a Laguerre-Hahn form satisfying \eqref{Riccati} with $A=\Phi$.
\item[(ii)]  \cite{Dini-these-1988} $u$ satisfies the functional equation
\begin{equation}\label{Laguerre-Hahn-EF}
(\Phi u)'+\psi u+B(x^{-1}u^2)=0,
\end{equation}
where $\Phi$, $B$, $C$, and $D$ are the polynomials in \eqref{Riccati} and
\begin{align*}
&C=-\Phi'-\psi,\\
&D=-(u\theta_0\Phi)'-(u\theta_0\psi)-(u^2\theta_0^2B).
\end{align*}
\item[(iii)] \cite{Dini-these-1988} Each polynomial \(P_n, n \geq 0\), verifies the so-called structure relation
\begin{equation}\label{Dini_St_Rel}
\Phi(x)P_{n+1}^{\prime}(x) - B(x)P_n^{(1)}(x) = \sum_{\mu=n-s}^{n+d} \theta_{n,\mu} P_\mu(x), \quad n \geq s + 1,
\end{equation}
where \(\Phi\) and \(B\) are the polynomials defined in (i),  \(\{P_n^{(1)}\}_{n \geq 0}\) is the sequence of associated orthogonal polynomials of order 1 of \(\{P_n\}_{n \geq 0}\),  
$d=\max(t,q)$, $s=\max(p-1,d-2)$, being $t$, $p$, and $q$ the degrees of  $\Phi$, $\psi$, and $B$, respectively. 
\end{enumerate}
\end{proposition}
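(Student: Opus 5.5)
We prove the cycle (i) $\Rightarrow$ (ii) $\Rightarrow$ (iii) $\Rightarrow$ (i), working throughout with the Stieltjes function $S=S(u)$ and the dictionary between operations on forms and operations on formal series in $z^{-1}$. The facts used are standard (cf. \cite{Maroni-1991}). First,
\[
S(fu)=fS(u)+(u\theta_0 f), \qquad S(u')=S(u)'.
\]
Second, $S(x^{-1}u^2)$ equals $S(u)^2$ modulo a polynomial correction, since $S(uv)=-zS(u)S(v)$. Third, $S$ is injective: a form is zero exactly when its Stieltjes function vanishes.

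For (i) $\Leftrightarrow$ (ii), we apply $S$ to $(\Phi u)'+\psi u+B(x^{-1}u^2)$. The dictionary gives
\[
(\Phi S+u\theta_0\Phi)'+\psi S+u\theta_0\psi+\bigl(-BS^2-u^2\theta_0^2B\bigr).
\]
We expand this and group the terms as $\Phi S'-BS^2-CS-D$ with $C=-\Phi'-\psi$. The polynomial remainder is then exactly $-D$, with $D$ as stated. By injectivity, the functional equation holds if and only if the Riccati equation holds. The one point requiring care is the sign and normalisation of $S(x^{-1}u^2)$. We fix it once by comparing moments, using $\langle x^{-1}u^2,p\rangle=\langle u^2,\theta_0p\rangle$.

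For (ii) $\Rightarrow$ (iii), set $Q_n:=\Phi P_{n+1}'-BP_n^{(1)}$. This is a polynomial of degree at most $n+d$. We expand it in the basis $\{P_\mu\}$, with coefficients
\[
\theta_{n,\mu}=\langle u,Q_nP_\mu\rangle/r_\mu .
\]
We must show $\theta_{n,\mu}=0$ for $\mu<n-s$. To do this, we test the functional equation against $P_{n+1}P_\mu$.
\begin{itemize}
\item The term $(\Phi u)'$ gives $-\langle u,\Phi P_{n+1}'P_\mu\rangle-\langle u,\Phi P_{n+1}P_\mu'\rangle$. The second piece vanishes when $t+\mu-1<n+1$.
\item The term $\psi u$ gives $\langle u,\psi P_{n+1}P_\mu\rangle$, which vanishes when $p+\mu<n+1$.
\item The term $B(x^{-1}u^2)$ gives $\langle u,u\,\theta_0(BP_{n+1}P_\mu)\rangle$.
\end{itemize}
The key identity needed for the third term is that this last expression equals $-\langle u,BP_n^{(1)}P_\mu\rangle$, up to terms killed by orthogonality for small $\mu$. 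It follows from writing $u\theta_0(fg)=f\,(u\theta_0g)+(u\,g\,\theta_0f)$-type splittings together with $P_n^{(1)}=u\theta_0P_{n+1}$.

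Combining the three terms gives $\langle u,Q_nP_\mu\rangle=0$ for $\mu\le n-s-1$. Collecting the degree conditions from the three terms yields precisely $s=\max(p-1,d-2)$. This bookkeeping, and above all the treatment of the nonlinear Cauchy-product term, is the main obstacle. If the direct route is messy, we would instead work at the level of functions of the second kind. Set $q_{n+1}=S(P_{n+1}u)=P_{n+1}S+P_n^{(1)}$ and use the Riccati equation to compute $\Phi q_{n+1}'$. Orthogonality then becomes the statement that $q_{n+1}=O(z^{-n-2})$. The combination $\Phi P_{n+1}'-BP_n^{(1)}$ is recovered as the polynomial part of $\Phi q_{n+1}'-\dots$, and the bound on the range of $\mu$ follows from order counting at infinity.

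For (iii) $\Rightarrow$ (i), we reverse the last argument. Multiply the structure relation by $S$ and add the corresponding relation for $P_n^{(1)}$, obtained from the fact that $q_{n+1}$ has order $z^{-n-2}$. This produces a polynomial combination $\Phi S'-BS^2-CS$ which is $O(z^{-N})$ for a fixed degree budget and for all large $n$. Its polynomial part $D$ is therefore well defined, and this gives the Riccati equation. Equivalently, we may show that $\Phi u$, $u$ and $x^{-1}u^2$ are linearly related by a form annihilating all $P_m$ for $m$ large, hence one of finite type. That form is then absorbed by choosing $\psi$ suitably.
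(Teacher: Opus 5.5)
The paper quotes this proposition from the literature without proof. Your (i)$\Leftrightarrow$(ii) and (ii)$\Rightarrow$(iii) follow the standard route, but several points need correcting.

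\emph{Sign of the nonlinear term.} Since $S(x^{-1}v)=z^{-1}S(v)$ and $S(u^2)=-zS(u)^2$, you get $S(x^{-1}u^2)=-S(u)^2$ exactly, with no polynomial correction. The correction comes from $S(Bv)=BS(v)+(v\theta_0B)$ with $v=x^{-1}u^2$. The term $(x^{-1}u^2)\theta_0B=u^2\theta_0^2B$ enters with a \emph{plus} sign, and this is what produces the displayed $D$.

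\emph{Identifying $D$ in (i)$\Rightarrow$(ii).} Say explicitly that $S(w)\in z^{-1}\mathbb{C}[[z^{-1}]]$ equals a polynomial, so both vanish. This is what forces the Riccati $D$ to be the displayed one.

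\emph{Sign of the key identity in (ii)$\Rightarrow$(iii).} Use the splitting
\[
\frac{g(x)-g(\zeta)}{x-\zeta}=B(x)P_\mu(x)\frac{P_{n+1}(x)-P_{n+1}(\zeta)}{x-\zeta}+P_{n+1}(\zeta)\frac{(BP_\mu)(x)-(BP_\mu)(\zeta)}{x-\zeta},\qquad g=BP_{n+1}P_\mu .
\]
It gives $\langle u,u\,\theta_0(BP_{n+1}P_\mu)\rangle=+\langle u,BP_n^{(1)}P_\mu\rangle$ for $\mu\le n+1-q$. With your minus sign the three terms combine into $\langle u,(\Phi P'_{n+1}+BP_n^{(1)})P_\mu\rangle$, not $\langle u,Q_nP_\mu\rangle$. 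With the correct sign, your degree bookkeeping does give $\mu\le n-s-1$.

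The genuine gap is (iii)$\Rightarrow$(i). Your main route is not an argument:
\begin{itemize}
\item (iii) supplies only $\Phi$, $B$ and $s$, so the polynomial $C$ has no source.
\item The ``corresponding relation for $P_n^{(1)}$'' can only come from $q_{n+1}=O(z^{-n-2})$.
\item Expanding $\Phi q_{n+1}'-BSq_{n+1}$ with (iii) yields only $P_{n+1}(\Phi S'-BS^2)=R_n+O(z^{s-n-1})$, with an $n$-dependent polynomial $R_n$. An $n$-independent identity $\Phi S'-BS^2=CS+D$ does not follow from this without a further argument that you do not give.
\end{itemize}

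Your last sentence contains the right proof. It must use $(\Phi u)'$, not $\Phi u$, and it must be carried out. Put $w=(\Phi u)'+B(x^{-1}u^2)$. The splitting above with $P_\mu$ replaced by $1$ gives, for $n\ge q-1$,
\[
\langle w,P_{n+1}\rangle=-\bigl\langle u,\Phi P_{n+1}'-BP_n^{(1)}\bigr\rangle=-\theta_{n,0}.
\]
For $n\ge s+1$ the sum in (iii) starts at $\mu=n-s\ge 1$, so $\theta_{n,0}=0$. Such $n$ also satisfy $n\ge q-1$, since $s\ge d-2\ge q-2$. Hence $\langle w,P_m\rangle=0$ for $m\ge s+2$, and so $w=\sum_{m=0}^{s+1}\langle w,P_m\rangle u_m$. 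Regularity gives $u_m=\langle u,P_m^2\rangle^{-1}P_m u$. Thus $w=-\psi u$ with
\[
\psi=-\sum_{m=0}^{s+1}\frac{\langle w,P_m\rangle}{\langle u,P_m^2\rangle}P_m,\qquad \deg\psi\le s+1 .
\]
This is (ii), and (ii)$\Rightarrow$(i) closes the cycle.
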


%%%%%%%%%%%%%%%%%%%%%%%%%%%%%%%%%%
It is worth noting that the above functional equation \eqref{Laguerre-Hahn-EF} is not uniquely determined.
Indeed, if $u$ is a Laguerre--Hahn form and $\chi$ is an arbitrary polynomial, then $u$ also satisfies
\[
(\chi \Phi u)' + \bigl(\chi \psi - \chi' \Phi\bigr) u
+ (\chi B)\,(x^{-1}u^{2}) = 0.
\]
This observation motivates the following definition.
\begin{definition} \cite{Alaya-Maroni, Bouakkaz-Maroni-1991}
The class of a Laguerre-Hahn form $u$ is the non-negative integer number defined as
$$
s:=\min\max\big\{\deg{\psi}-1, \max\{\deg{\Phi}, \deg{B}\}-2\big\},
$$
where the minimum is taken among all polynomials $\Phi, \psi$ and $B$ such that $u$
satisfies \eqref{Laguerre-Hahn-EF}.
\end{definition}

Taking into account that the class of a Laguerre-Hahn form is crucial to state a hierarchy of such families, we need to give a criterion to characterize it.
\begin{proposition}\label{proposition-simplification} \cite{Alaya-Maroni,Bouakkaz-Maroni-1991}
Let $u$ be a Laguerre-Hahn form and let $\Phi$ and $\psi$ be non-zero polynomials 
such that \eqref{Laguerre-Hahn-EF} holds.
 Let
 \begin{equation}\label{s_class_LH}
 s=\max\big\{\deg{\psi}-1, \max\{\deg{\Phi}, \deg{B}\}-2\big\}.
 \end{equation} 
 Then $s$ is the class of $u$ if and only if
\begin{equation*}
\prod_{c\in\mathcal{Z}_{\Phi}}{\Big(|\Phi'(c)+\psi(c)|+|B(c)|+|\langle u, \theta_c^2\Phi+\theta_c\psi+u\theta_0\theta_c B\rangle|\Big)}\neq0,
\end{equation*}
where $\mathcal{Z}_{\Phi}$ denotes the set of zeros of $\Phi$.
\end{proposition}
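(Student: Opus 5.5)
The statement to prove is Proposition~\ref{proposition-simplification}. My plan is to reduce it to a divisibility question at the zeros of $\Phi$: the class drops below $s$ exactly when one can divide the functional equation \eqref{Laguerre-Hahn-EF} by a factor $x-c$ with $c\in\mathcal{Z}_{\Phi}$.

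\textbf{Reduction to a local criterion.} Let $c\in\mathcal{Z}_{\Phi}$ and write $\Phi=(x-c)\Phi_c$. I will show that $u$ also satisfies
\[
(\Phi_c u)'+\psi_c u+B_c(x^{-1}u^2)=0
\]
for some polynomials $\psi_c,B_c$, with $\deg\psi_c\le\deg\psi-1$ and $\deg B_c\le \deg B-1$, if and only if
\[
\Phi'(c)+\psi(c)=0,\qquad B(c)=0,\qquad \langle u,\theta_c^2\Phi+\theta_c\psi+u\theta_0\theta_c B\rangle=0.
\]
Any such reduced equation lowers $\max\{\deg\psi-1,\max\{\deg\Phi,\deg B\}-2\}$ by one. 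Conversely, if the class is smaller than $s$, the minimal triple $(\tilde\Phi,\tilde\psi,\tilde B)$ gives back the original one after multiplication by some polynomial $\chi$, via the identity preceding the definition of class. I will justify this with the standard uniqueness argument: two Laguerre--Hahn equations are compatible only through such a multiplier, which follows by comparing the Riccati equations \eqref{Riccati} for $S(u)$. This yields a zero $c$ of $\Phi$ at which the reduction is possible. So the class equals $s$ exactly when no $c$ admits a reduction, which is the nonvanishing of the product.

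\textbf{Derivation of the three conditions.} I will use the product rules $(x-c)(\theta_c f)=f-f(c)$ and $(x-c)\bigl((x-c)^{-1}v\bigr)=v-\langle v,1\rangle\delta_c$. Write
\[
\psi=(x-c)\psi_c+\psi(c),\qquad B=(x-c)B_c+B(c).
\]
Then the equation becomes
\[
(x-c)\Bigl[(\Phi_c u)'+(\psi_c+\Phi_c)u+B_c(x^{-1}u^2)\Bigr]+\bigl(\psi(c)+\Phi_c(c)\bigr)u+B(c)\,x^{-1}u^2=0,
\]
using $((x-c)w)'=(x-c)w'+w$. Here $\Phi_c(c)=\Phi'(c)$.

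The regularity of $u$ forces $\psi(c)+\Phi'(c)=0$ and $B(c)=0$ whenever the bracket is a multiple of $\delta_c$. This is the step I expect to be the main obstacle. A regular $u$ cannot be a combination of $\delta_c$ and a form annihilated by $x-c$. The term $x^{-1}u^2$, however, must be handled via the Stieltjes function: I will translate everything into the Riccati equation, where $S(x^{-1}u^2)$ involves $S(u)^2$. The claim then becomes that a nonzero combination $\alpha S+\beta S^2$ cannot be rational of low degree, which uses the non-rationality of $S$ for regular $u$ (a rational $S$ would force finitely many moments to generate a finite Hankel rank).

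Once both pointwise conditions hold, $(x-c)W=\lambda\delta_c$, so $W=\lambda\delta_c+\mu\,\cdot$(forms killed by $x-c$). This forces $W=\lambda'\delta_c$. Evaluating $\langle W,1\rangle$ shows that $\lambda'$ is exactly
\[
\langle u,\theta_c^2\Phi+\theta_c\psi+u\theta_0\theta_c B\rangle,
\]
computed by expanding $\theta_c$ applied to $\Phi,\psi,B$ and using the definition of the Cauchy product, $\langle x^{-1}u^2,f\rangle=\langle u,u\theta_0 f\rangle$. Vanishing of $\lambda'$ is then precisely the third condition, which gives the reduced equation. This completes the plan.
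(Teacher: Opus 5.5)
Your reduction relies twice on a uniqueness claim, and this is where the argument fails. In the converse you assert that every equation \eqref{Laguerre-Hahn-EF} satisfied by $u$ is a polynomial multiple of a minimal one, ``by comparing the Riccati equations''. Comparing $\Phi_jS'=B_jS^2+C_jS+D_j$, $j=1,2$, only gives
\[
(\Phi_2B_1-\Phi_1B_2)S^2+(\Phi_2C_1-\Phi_1C_2)S+(\Phi_2D_1-\Phi_1D_2)=0 .
\]
Non-rationality of $S$ settles this only when the coefficient of $S^2$ is already zero. To force all three coefficients to vanish, you need $S$ to satisfy no nontrivial quadratic equation over $\mathbb{C}[z]$, i.e.\ $u$ must not be a second-degree form. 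The ``only if'' half of your local criterion has the same problem. Subtracting $(x-c)$ times an arbitrary reduced equation from the original leaves $p\,u+q\,(x^{-1}u^2)=0$ with \emph{polynomial} $p,q$, not constants.

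Without that hypothesis the statement itself is false. Take the Chebyshev form of the second kind ($\beta_n=0$, $\gamma_{n+1}=1/4$). It satisfies $S^2+4zS+4=0$, i.e.\ $x^{-1}u^2=4xu$, and also $((x^2-1)u)'-3xu=0$. Adding $x\,(x^{-1}u^2-4xu)=0$ shows that $u$ satisfies \eqref{Laguerre-Hahn-EF} with $\Phi=x^2-1$, $B=x$, $\psi=-3x-4x^2$. Here $s=1$ while the class is $0$, yet $B(\pm1)\neq0$, so the product is nonzero. You must therefore assume, and actually use, that $u$ is not of second degree. The paper only quotes this proposition from the literature and does not flag this.

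Under that hypothesis your plan works, but two points need repair. First, the proportionality of the two quadruples holds over $\mathbb{C}(z)$, not $\mathbb{C}[z]$. To get a polynomial multiplier $\chi$, pass to the quadruple with coprime entries and check that multiplying by $\chi$ with $\deg\chi\ge1$ raises $s$ by at least $\deg\chi$; this shows the primitive quadruple is the minimal one. Then $x-c$ divides $\Phi,B,C,D$ for any root $c$ of $\chi$. The quotient equation is exactly $W=0$ below, and it yields the three conditions with no further appeal to regularity.

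Second, your displayed splitting is wrong. Since $(\Phi u)'=(x-c)(\Phi_cu)'+\Phi_cu$ and $\Phi_c=\Phi'(c)+(x-c)\theta_c\Phi_c$, the bracket must contain $(\theta_c^2\Phi+\theta_c\psi)u$, not $(\psi_c+\Phi_c)u$. Your version produces $(x-c)\Phi_cu+\Phi'(c)u=\Phi u+\Phi'(c)u$ where $\Phi_cu$ is needed. The correct bracket is $W=(\Phi_cu)'+(\theta_c^2\Phi+\theta_c\psi)u+(\theta_cB)(x^{-1}u^2)$, and with it $\langle W,1\rangle=\langle u,\theta_c^2\Phi+\theta_c\psi+u\theta_0\theta_cB\rangle$ as you claim. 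The step you flag as the main obstacle is then immediate. The relation $r\,u+B(c)\,x^{-1}u^2=0$ with constants $r,B(c)$ gives $rS-B(c)S^2=0$, which forces $r=B(c)=0$ because $S=-z^{-1}+\cdots$.
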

\begin{remark}
When it is possible to simplify by the factor \( x - c \), we obtain the new functional equation
\[ 
((\theta_c\Phi)u)' + (\theta_c\psi + \theta_c^2\Phi)u + (\theta_cB)(x^{-1}u^2) = 0. 
\]
Then \( u \) is of class less than or equal to \( s - 1 \).    
\end{remark}

Based on Proposition~\ref{proposition-simplification}, one obtains an alternative criterion to determine the class using the polynomials involved in the Riccati equation~\eqref{Riccati}.
\begin{corollary} \cite{Alaya-Maroni}
Let $u$ be a Laguerre-Hahn form and let $A=\Phi$, $B$, $C$, and $D$ be non-zero polynomials satisfying \eqref{Riccati}. Then $s$ given by \eqref{s_class_LH} is the class of $u$ if and only if the polynomials $\Phi$, $B$, $C$, and $D$ are coprime or, equivalently,
$$
\prod_{c\in\mathcal{Z}_{\Phi}}{\big(|B(c)|+|C(c)|+|D(c)|\big)}\neq0.
$$
\end{corollary}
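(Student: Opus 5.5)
The plan is to reduce the Corollary to Proposition~\ref{proposition-simplification} by translating each term of the product there into the Riccati data $\Phi$, $B$, $C$, $D$. Fix a zero $c$ of $\Phi$. By item (ii) of the characterization proposition we have $C=-\Phi'-\psi$, so $|C(c)|=|\Phi'(c)+\psi(c)|$. The factor $|B(c)|$ appears unchanged in both criteria. It therefore suffices to show the following: under the conditions $C(c)=0$ and $B(c)=0$, we have $D(c)=0$ if and only if
\[
\langle u,\theta_c^2\Phi+\theta_c\psi+u\theta_0\theta_cB\rangle=0 .
\]
The proof then ends as follows. A factor of the product in Proposition~\ref{proposition-simplification} vanishes exactly when all three of $B(c)$, $C(c)$, $D(c)$ vanish. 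Since the common zeros of $\Phi,B,C,D$ are precisely such $c\in\mathcal Z_\Phi$, this is the same as coprimality.

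The key computation is $D(c)$, starting from $D=-(u\theta_0\Phi)'-(u\theta_0\psi)-(u^2\theta_0^2B)$. I would use the identity $(uf)(x)=\langle u_\xi,(xf(x)-\xi f(\xi))/(x-\xi)\rangle$. For $f=\theta_0 g$ this simplifies to $(u\theta_0 g)(x)=\langle u_\xi,(g(x)-g(\xi))/(x-\xi)\rangle=\langle u,\theta_x g\rangle$.

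Each term is then evaluated at $x=c$:
\begin{itemize}
\item $(u\theta_0\psi)(c)=\langle u,\theta_c\psi\rangle$.
\item $\frac{d}{dx}\langle u_\xi,\theta_x\Phi\rangle=\langle u_\xi,\partial_x\frac{\Phi(x)-\Phi(\xi)}{x-\xi}\rangle$. At $x=c$ with $\Phi(c)=0$, this derivative equals $\langle u,\theta_c^2\Phi\rangle$ up to a term involving $\Phi'(c)$. Precisely, $\partial_x[(\Phi(x)-\Phi(\xi))/(x-\xi)]$ at $x=c$ equals $\Phi'(c)/(c-\xi)-(\Phi(c)-\Phi(\xi))/(c-\xi)^2$. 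Combining this with $\theta_c^2\Phi$ gives $(\theta_c\Phi)'$-type terms, which are treated in the next paragraph.
\item For the $B$ term, I would similarly expand $u^2\theta_0^2B$ at $c$, using $B(c)=0$ and the definition of the Cauchy product, to obtain $\langle u,u\theta_0\theta_cB\rangle$.
\end{itemize}

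The main obstacle is this bookkeeping: matching $D(c)$ exactly with $-\langle u,\theta_c^2\Phi+\theta_c\psi+u\theta_0\theta_cB\rangle$, possibly up to a multiple of $C(c)$ or $B(c)$. Any leftover multiple of those quantities is harmless, because we work under $C(c)=B(c)=0$. For example, a stray $\Phi'(c)\langle u,\theta_c 1\rangle$-type term should combine with the $\psi$ term into something proportional to $\Phi'(c)+\psi(c)$.

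If the direct computation becomes messy, I would argue structurally instead, via the Remark after Proposition~\ref{proposition-simplification}. That remark says simplification by $x-c$ is possible exactly when the functional equation can be divided, i.e. when the Riccati equation can be divided by $z-c$. Dividing the Riccati equation by $z-c$ requires $B(c)S^2+C(c)S+D(c)=0$ identically in $z$. Since $S$ is a nonpolynomial formal series, this forces $B(c)=C(c)=D(c)=0$. Conversely, if all three vanish, division yields a Riccati equation with $\theta_c\Phi$, whose associated functional equation has class at most $s-1$. Either way, the equivalence with Proposition~\ref{proposition-simplification} follows.
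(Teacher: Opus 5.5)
Your reduction to Proposition~\ref{proposition-simplification}, via $C=-\Phi'-\psi$ and evaluating $D(c)$, is correct and follows the route the paper indicates; the paper itself gives no proof, only citing \cite{Alaya-Maroni} and stating that the criterion comes from that proposition. The bookkeeping you flag as the obstacle closes exactly: your derivative formula at $x=c$ is precisely $(\theta_c^2\Phi)(\xi)$, because $\partial_x\theta_x\Phi=\theta_x^2\Phi$, so there is no stray $\Phi'(c)$ term; likewise $(u^2\theta_0^2B)(c)=\langle u^2,\theta_c\theta_0B\rangle=\langle u,u\theta_0\theta_cB\rangle$, hence $D(c)=-\langle u,\theta_c^2\Phi+\theta_c\psi+u\theta_0\theta_cB\rangle$ for every $c$, the two products agree factor by factor, and neither the side conditions $B(c)=C(c)=0$ nor your (only heuristic) fallback argument is needed.
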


%%%%%%%%%%%%%%%%%%%%%%%%%%%%%%%%%%

Before proceeding further, we recall several relations that will be essential for the developments that follow.

\begin{proposition}\label{proposition-FSR} \cite{Dini-these-1988,Dzoumba-these-1985,Maroni-1991}
Let \(\{P_n\}_{n \geq 0}\) be a MOPS with respect to \(u\), satisfying \eqref{ic_TTRR}-\eqref{TTRR}. The following statements are equivalent.
\begin{enumerate}
\item[(i)] \(u\) is a Laguerre-Hahn form of class \(s\) satisfying the functional equation \eqref{Laguerre-Hahn-EF}.

\item[(ii)] \(\{P_n\}_{n \geq 0}\) satisfies  the following structure relation 
\begin{align}
\Phi(x) P_{n+1}'(x) - B_0(x) P_n^{(1)} (x) =& \frac{1}{2}\big(C_{n+1}(x) - C_0(x)\big) P_{n+1} (x)\nonumber\\ 
&- \gamma_{n+1} D_{n+1} (x) P_n (x), \quad n \geq 0, \label{R4}    
\end{align}
where \(C_n\) and \(D_n\) are polynomials with coefficients depending on \(n\), such that 
$$\deg C_n \leq s + 1, \qquad \deg D_n \leq s,$$ 
satisfying the recurrence relations
\begin{align}
C_{n+1}(x) =& -C_n(x) + 2(x - \beta_n) D_n(x), \label{SR-1}\\
\gamma_{n+1} D_{n+1} (x) =& -\Phi(x) + \gamma_n D_{n-1} (x) - (x - \beta_n) C_n (x) + (x - \beta_n)^2 D_n (x), \label{SR-2}
\end{align}
for every \(n \geq 0\), with the initial conditions 
$$
B_0(x)=B(x),\quad C_0(x)=C(x),\quad D_0(x)=D(x),\quad \gamma_0D_{-1}(x)=B(x).
$$
\end{enumerate}
\end{proposition}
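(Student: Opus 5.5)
The implication (i)$\Rightarrow$(ii) goes through Stieltjes functions and a Riccati-type identity for the quotients $P_n^{(1)}/P_{n+1}$.

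Let $S=S(u)$ satisfy \eqref{Riccati} with $A=\Phi$, $C=-\Phi'-\psi$ and $D$ as in the Proposition giving equivalent characterizations of Laguerre--Hahn forms. Set $R_{n}:=P_{n-1}^{(1)}/P_n$. The standard Padé property gives
\[
S(z)=-\frac{P_{n}^{(1)}(z)}{P_{n+1}(z)}+O\!\left(z^{-2n-3}\right).
\]
Equivalently, put $\varepsilon_{n+1}:=P_{n+1}S+P_n^{(1)}$; this is the second-kind function, of order $z^{-n-2}$.

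Substitute $S=(\varepsilon_{n+1}-P^{(1)}_n)/P_{n+1}$ into $\Phi S'-BS^2-CS-D=0$ and multiply by $P_{n+1}^2$. The terms not involving $\varepsilon_{n+1}$ form the polynomial
\[
W_n:=\Phi\,(P_{n+1}'P_n^{(1)}-P_{n+1}P_n^{(1)\prime})-B\,(P_n^{(1)})^2+C\,P_n^{(1)}P_{n+1}-D\,P_{n+1}^2 .
\]
The remaining terms are $\varepsilon_{n+1}$ times polynomials of degree about $n+\deg$, so they are $O(z^{s})$. Hence $W_n$ has degree $\le 2n+s+$const. More precisely, one shows that $W_n$ factors through the Christoffel--Darboux/Casorati structure as a combination of products of consecutive polynomials with low-degree coefficients.

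For the direct construction I would define $C_n$ and $D_n$ by \eqref{SR-1}--\eqref{SR-2} with the given initial data. Then I would prove \eqref{R4} by induction on $n$, using the three-term recurrences \eqref{TTRR} for $P_n$ and $P_n^{(1)}$, and the Casorati identity $P_{n+1}P_{n-1}^{(1)}-P_nP_n^{(1)}=-\gamma_1\cdots\gamma_n$ (suitably normalized).

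\textbf{Base case} $n=0$. Here $P_0^{(1)}=1$, and \eqref{R4} reads $\Phi=B_0\cdot 1+\frac12(C_1-C_0)P_1-\gamma_1D_1$. Inserting $C_1=-C_0+2(x-\beta_0)D_0$ and $\gamma_1D_1=-\Phi+B-(x-\beta_0)C_0+(x-\beta_0)^2D_0$ makes this an identity.

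\textbf{Inductive step.} Assume \eqref{R4} holds at ranks $n-1$ and $n$. Differentiate $P_{n+2}=(x-\beta_{n+1})P_{n+1}-\gamma_{n+1}P_n$, multiply by $\Phi$, and use $P_{n+1}^{(1)}=(x-\beta_{n+1})P_n^{(1)}-\gamma_{n+1}P_{n-1}^{(1)}$ to combine the $B$-terms. The $\Phi P_{n+1}$ term is not yet expressed in terms of the target basis $\{P_{n+2},P_{n+1}\}$ plus $B$-terms; its appearance is exactly what forces the $-\Phi$ in \eqref{SR-2}.

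The main obstacle is closing this step. A naive induction leaves a leftover of the form $P_n\cdot(\cdots)$ that must be rewritten via \eqref{TTRR}. It also needs the auxiliary identity
\[
\gamma_{n+1}D_{n+1}\,?\;\text{together with}\; C_{n+1}^2-C_0^2 \text{ relation},
\]
namely the first integral $C_{n+1}^2/4-\gamma_{n+1}D_{n+1}D_n$ expressed through $\Phi$, $B$, $C_0$, $D_0$. I would first derive this invariant from \eqref{SR-1}--\eqref{SR-2} by direct telescoping. With it, the leftover $P_n$-terms should cancel.

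The degree bounds come last. By the Riccati/functional equation and orthogonality, the right side of \eqref{R4} expands in at most $P_{n-s},\dots$. Comparing with the two-term form shows $\deg C_n\le s+1$ and $\deg D_n\le s$.

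For the converse (ii)$\Rightarrow$(i), divide \eqref{R4} by $P_{n+1}^2$ and let $n\to\infty$ formally in the $z^{-1}$-adic topology. The quotient $P^{(1)}_n/P_{n+1}$ converges to $-S$, and the uniform degree bounds let one extract a Riccati equation for $S$ with coefficients $\Phi,B,C_0,D_0$.
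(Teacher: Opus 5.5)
Your proposal has a genuine gap. Your induction for \eqref{R4} never uses the Riccati equation, and it cannot. Once $C_n,D_n$ are generated by \eqref{SR-1}--\eqref{SR-2} from \emph{arbitrary} polynomials $\Phi,B,C,D$, relation \eqref{R4} holds identically.

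The inductive step closes directly, with no auxiliary invariant. Start from $\Phi P_{n+2}'-BP_{n+1}^{(1)}=\Phi P_{n+1}+(x-\beta_{n+1})\bigl(\Phi P_{n+1}'-BP_n^{(1)}\bigr)-\gamma_{n+1}\bigl(\Phi P_n'-BP_{n-1}^{(1)}\bigr)$. Insert \eqref{R4} at ranks $n$ and $n-1$, then eliminate $P_{n-1}$ and afterwards $P_n$ via \eqref{TTRR}. The coefficient of $P_{n+2}$ becomes $(x-\beta_{n+1})D_{n+1}-\frac12(C_{n+1}+C_0)=\frac12(C_{n+2}-C_0)$. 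The coefficient of $P_{n+1}$ becomes $\Phi+(x-\beta_{n+1})C_{n+1}-\gamma_{n+1}D_n-(x-\beta_{n+1})^2D_{n+1}=-\gamma_{n+2}D_{n+2}$. The ``first integral'' you invoke is not one, since $\frac14C_{n+1}^2-\gamma_{n+1}D_{n+1}D_n=\frac14C_n^2-\gamma_nD_{n-1}D_n+\Phi D_n$.

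Consequently, the entire content of (i)$\Rightarrow$(ii) is the degree bound, and ``comparing with the two-term form'' does not deliver it. A polynomial has infinitely many representations $aP_{n+1}+bP_n$: replace $(a,b)$ by $(a+QP_n,\,b-QP_{n+1})$. So knowing that $\Phi P_{n+1}'-BP_n^{(1)}$ is a short combination of the $P_\mu$ says nothing about the particular $C_{n+1},D_{n+1}$ produced by the recurrences. For instance, $\Phi=1$, $B=C=D=0$ gives $\deg D_{n+1}=2n$ (see the identity below), while \eqref{R4} still holds. Your converse has the same defect: dividing \eqref{R4} by $P_{n+1}^2$ sends every term to $0$, and since \eqref{R4} is an identity, no limit of it can produce the Riccati equation.

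The missing link is already in your first paragraph, but your estimate there is wrong. Take $P_n^{(1)}\times$\eqref{R4}$\,-\,P_{n+1}\times$\eqref{R3}; \eqref{R3} is likewise an algebraic consequence of the recurrences, proved by the same induction. Using your Casorati identity, this gives
\[
W_n=-\gamma_1\gamma_2\cdots\gamma_{n+1}\,D_{n+1},\qquad n\ge 0,
\]
and for $n=0$ this is just \eqref{SR-2}. On the other hand, your substitution $S=(\varepsilon_{n+1}-P_n^{(1)})/P_{n+1}$ gives exactly
\[
W_n=-\Phi\bigl(\varepsilon_{n+1}'P_{n+1}-\varepsilon_{n+1}P_{n+1}'\bigr)+B\,\varepsilon_{n+1}\bigl(\varepsilon_{n+1}-2P_n^{(1)}\bigr)+C\,P_{n+1}\varepsilon_{n+1}.
\]
Its terms are $O(z^{\deg\Phi-2})$, $O(z^{\deg B-2})$ and $O(z^{\deg C-1})$. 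Since $\deg C\le\max(\deg\Phi-1,\deg\psi)$, this yields $\deg W_n\le s$, not $2n+s+\mathrm{const}$. Hence $\deg D_{n+1}\le s$, and $\deg C_n\le s+1$ follows from \eqref{SR-1} because $\deg C_0\le s+1$ and $\deg D_0\le s$.

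For the converse, do not assume the Riccati equation. The same substitution gives $P_{n+1}^2\bigl(\Phi S'-BS^2-CS-D\bigr)=W_n+O(z^{K})$, with $K$ independent of $n$. If the degrees of the $D_{n+1}$ stay bounded, the left-hand side has bounded order for all $n$. That forces $\Phi S'=BS^2+CS+D$.
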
 

\begin{proposition}\label{Lemma-P} \cite{Dini-these-1988,Dzoumba-these-1985}
Let \(\{P_n\}_{n \geq 0}\) be a Laguerre-Hahn MOPS. For every $n\geq0$, we have
\begin{align}
\Phi(x)(P^{(1)}_{n-1}(x))' =& D_{n}(x)P^{(1)}_{n}(x) - \frac{1}{2}\left(C_{n+1}(x) - C_0(x)\right)P^{(1)}_{n-1}(x) - D_0(x)P_{n}(x), \label{R1}\\
\Phi(x)(P_{n}(x))' =& D_{n}(x)P_{n+1}(x) - \frac{1}{2}\left(C_{n+1}(x) + C_0(x)\right)P_{n}(x) + B_0(x)P^{(1)}_{n-1}(x), \label{R2} \\
\Phi(x)(P^{(1)}_{n}(x))' =& \frac{1}{2}\left(C_{n+1}(x) + C_0(x)\right)P^{(1)}_{n}(x) - \gamma_{n+1}D_{n+1}(x)P^{(1)}_{n-1}(x) - D_0(x)P_{n+1}(x). \label{R3}
\end{align}
\end{proposition}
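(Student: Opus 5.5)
The plan is to derive the three relations \eqref{R1}--\eqref{R3} from two ingredients. The first is the structure relation \eqref{R4} of Proposition~\ref{proposition-FSR}, which we take as known for all $n\ge0$. The second is the three-term recurrences \eqref{TTRR} for $\{P_n\}$ and for the associated sequence, written as $P^{(1)}_{n}=(x-\beta_{n})P^{(1)}_{n-1}-\gamma_{n}P^{(1)}_{n-2}$ with $P^{(1)}_{-1}=0$. Alongside these we use the Liouville--Ostrogradski (Casorati) identity
\[
P_{n+1}(x)P^{(1)}_{n-1}(x)-P_n(x)P^{(1)}_n(x)=-\prod_{k=1}^{n}\gamma_k ,
\]
which follows by induction from the recurrences.

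We start with \eqref{R2}. Shift \eqref{R4} from $n$ to $n-1$, which expresses $\Phi P_n'-B P^{(1)}_{n-1}$ through $P_n$ and $P_{n-1}$. Then eliminate $P_{n-1}$ using $\gamma_nP_{n-1}=(x-\beta_n)P_n-P_{n+1}$. The coefficient of $P_{n+1}$ becomes $D_n$. The coefficient of $P_n$ becomes $\tfrac12(C_n-C_0)-(x-\beta_n)D_n$, and by \eqref{SR-1} this equals $-\tfrac12(C_{n+1}+C_0)$. This gives \eqref{R2} directly; the case $n=0$ is checked with the initial conditions $\gamma_0D_{-1}=B$ and $C_1=-C_0+2(x-\beta_0)D_0$.

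For \eqref{R1} and \eqref{R3} we proceed by induction on $n$. In each case the base step $n=0$ reduces to the definitions: for \eqref{R1} the left side is $0$ and the right side is $D_0\cdot1-D_0P_0=0$, while for \eqref{R3} one uses $P^{(1)}_0=1$ together with $C_1$ and $\gamma_1D_1$ from \eqref{SR-1}--\eqref{SR-2}.

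Observe that \eqref{R1} at index $n+1$ has left side $\Phi(P^{(1)}_n)'$, the same as \eqref{R3} at index $n$. We therefore prove \eqref{R1} first and then deduce \eqref{R3} from \eqref{R1}$_{n+1}$. To do so, rewrite $P_{n+1}$, $P^{(1)}_{n+1}$ and $C_{n+2}$ via the recurrences and \eqref{SR-1}, and $\gamma_{n+1}D_{n+1}$ via \eqref{SR-2}. A more robust alternative for \eqref{R1} avoids a fragile induction. Differentiate the Casorati identity, multiply by $\Phi$, and substitute \eqref{R4} and \eqref{R2} for $\Phi P'_{n+1}$ and $\Phi P'_n$. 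This leaves a linear relation involving $\Phi(P^{(1)}_{n-1})'$ and $\Phi(P^{(1)}_{n})'$. Combined with the derivative of the associated recurrence, it gives a $2\times2$ system whose determinant is essentially the Casorati constant, hence nonzero. Solving it yields both \eqref{R1} and \eqref{R3}.

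The main obstacle will be this last bookkeeping step. It requires verifying that the coefficients that emerge collapse exactly to $D_n$, $-\tfrac12(C_{n+1}-C_0)$ and $-D_0$. That collapse needs \eqref{SR-2} and also an auxiliary invariant, obtained from \eqref{SR-1}--\eqref{SR-2} by induction, of the form
\[
C_{n+1}^2-C_0^2=4\big(\gamma_{n+1}D_{n+1}D_n-\Phi\,\textstyle\sum\ldots\big),
\]
more precisely the conserved quantity $\tfrac14C_{n+1}^2-\gamma_{n+1}D_{n+1}D_n+\ldots$. I would first establish this invariant, equivalently $\Phi\sum_{k=0}^{n}D_k=\ldots$, and then check the $P^{(1)}$-coefficients, where the term $-D_0P_{n}$ in \eqref{R1} arises from the Stieltjes data $D=D_0$.
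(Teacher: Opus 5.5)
Your derivation of \eqref{R2} (shift \eqref{R4} to $n-1$, eliminate $P_{n-1}$ with the recurrence, then use \eqref{SR-1}) is correct. So is your observation that \eqref{R1} at index $n+1$ is equivalent to \eqref{R3} at index $n$; this needs only \eqref{SR-1} and the associated recurrence. The gap is that \eqref{R1} itself is never proved. You give no inductive step, and the Casorati route you fall back on cannot work.

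Substituting \eqref{R4} and \eqref{R2} into $\Phi$ times the derivative of the Casorati identity gives a single equation $P_{n+1}X-P_nY=(\text{known})$, where $X=\Phi(P^{(1)}_{n-1})'$ and $Y=\Phi(P^{(1)}_n)'$. This equation is unchanged under $(X,Y)\mapsto(X+cP_n,\,Y+cP_{n+1})$. In particular the terms $-D_0P_n$ and $-D_0P_{n+1}$ cancel out of it identically, so no amount of bookkeeping can recover them from it.

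The differentiated associated recurrence does not give a second equation in $(X,Y)$. It brings in a third unknown, $\Phi(P^{(1)}_{n+1})'$. If you eliminate that unknown from the Casorati equation at level $n+1$, you get back the linear form $\gamma_{n+1}(P_{n+1}X-P_nY)$, because the Casorati identity is itself a consequence of the recurrences. Your system therefore has rank one. A determinant equal to the Casorati constant would require an independent relation for something like $\Phi\bigl(P^{(1)}_{n-1}(P^{(1)}_n)'-P^{(1)}_n(P^{(1)}_{n-1})'\bigr)$, and nothing in your plan supplies one.

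The induction you set aside closes without difficulty. Prove \eqref{R1} and \eqref{R3} together by induction on $n$, with the base case $n=0$ exactly as you wrote it.

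Assume both hold at level $n$ and insert them into
$\Phi(P^{(1)}_{n+1})'=\Phi P^{(1)}_n+(x-\beta_{n+1})\Phi(P^{(1)}_n)'-\gamma_{n+1}\Phi(P^{(1)}_{n-1})'$.
Compare the result with the right-hand side of \eqref{R3} at level $n+1$, rewritten using $P^{(1)}_{n+1}=(x-\beta_{n+1})P^{(1)}_n-\gamma_{n+1}P^{(1)}_{n-1}$. Three checks then finish the step:
the $D_0$-terms recombine into $-D_0P_{n+2}$ by \eqref{TTRR};
the coefficients of $P^{(1)}_{n-1}$ agree by \eqref{SR-1} at index $n+1$;
the coefficients of $P^{(1)}_n$ agree after applying \eqref{SR-2} at index $n+1$ and then \eqref{SR-1}.
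Your equivalence then gives \eqref{R1} at level $n+1$. Neither \eqref{R4} nor any conserved quantity is needed for \eqref{R1} and \eqref{R3}.

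Alternatively, $u^{(1)}$ is itself Laguerre--Hahn with data $(\Phi,\gamma_1D_0,C_1,D_1)$ and structure coefficients $C_{n+1},D_{n+1}$. Applying \eqref{R4} to $\{P^{(1)}_n\}$ and eliminating $P^{(2)}_n$ via $\gamma_1P^{(2)}_n=(x-\beta_0)P^{(1)}_{n+1}-P_{n+2}$ yields \eqref{R3} directly.
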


The theoretical framework established above provides all the ingredients needed for the study of differential equations satisfied by Laguerre--Hahn polynomials. In the following section, we extend this framework to develop a systematic approach for deriving homogeneous linear differential equations of any fixed order for such families.

%%%%%%%%%%%%%%%%%%%%%%%%%%%%%%%%%%%%%%%%%%%%%%%%%
%%%%%%%%%%%%%%%%%%%%%%%%%%%%%%%%%%%%%%%%%%%%%%%%%
\section{Structure relations and higher-order differential equations for Laguerre-Hahn orthogonal polynomials} \label{Section3}
%%%%%%%%%%%%%%%%%%%%%%%%%%%%%%%%%%%%%%%%%%%%%%%%%
%%%%%%%%%%%%%%%%%%%%%%%%%%%%%%%%%%%%%%%%%%%%%%%%%

In this section, we begin by constructing a sequence of structure relations, taking as starting point the fundamental structure relation \eqref{R4} given in Proposition~\ref{proposition-FSR}. This relation, which characterizes Laguerre--Hahn orthogonal polynomial sequences, will serve as the foundation for the entire construction. By successively differentiating and combining it with auxiliary relations, we obtain a hierarchy of structure relations indexed by the order of derivation. This hierarchy is the key tool that will allow us, in a second step, to derive homogeneous linear differential equations of arbitrary order for such sequences.

%%%%%%%%%%%%%%%%%%%%%%%%%%%%%%%%%%%%%%%%%%%%%%%%%%
\begin{theorem}\label{th:0.1}
If \(\{P_n\}_{n \geq 0}\) is a Laguerre-Hahn monic orthogonal polynomial sequence, then it satisfies the following structure relation for each integer $i\geq 1$: 
\begin{equation}
G_{0,i}(x;n) P_{n-1}^{(1)}(x) + G_{1,i}(x;n) P_{n}^{(1)}(x) + H_i(x;n) P_n(x) = F_i(x;n),\ n\geq 0, \label{ST_RR_i}
\end{equation}
with
\begin{equation}
F_i(x;n) = \Phi^i(x) P_{n+1}^{[i]}(x) + \sum_{k=0}^{i-1} M_{k,i}(x;n) P_{n+1}^{[k]}(x), \ n\geq 0,\label{RR_F}
\end{equation}
where $P_{n+1}^{[k]}(x)$ denotes the $k$-th derivative of $P_{n+1}(x)$ with respect to  $x$.

The coefficients $G_{0,i}(x;n)$, $G_{1,i}(x;n)$, and $H_i(x;n)$,  satisfy the following initial conditions for $n\geq 0$,
\begin{align}
G_{0,1}(x;n)=&0,  \label{G01}\\
G_{1,1}(x;n)=&B_{0}(x),  \label{G11}\\
H_{1}(x;n)=&-\gamma_{n+1}D_{n+1}(x), \label{H1}
\end{align}
and recurrence relations for $i\geq 1$, and $n\geq 0$,
\begin{eqnarray}
G_{0,i+1}(x;n) &=& -\frac12\bigl(C_{n+1}(x)-C_0(x)\bigr) G_{0,i}(x;n) - \gamma_{n+1} D_{n+1}(x) G_{1,i}(x;n) \notag\\
&&+ B_0(x) H_i(x;n) + \Phi(x) G_{0,i}'(x;n) ,\label{RR_G0}\\
G_{1,i+1}(x;n) &=& D_n(x) G_{0,i}(x;n) + \frac12\bigl(C_{n+1}(x)+C_0(x)\bigr) G_{1,i}(x;n) + \Phi G_{1,i}' (x;n),\label{RR_G1}\\
H_{i+1}(x;n) &= &-D_0(x) G_{0,i}(x;n) - \frac12\bigl(C_{n+1}(x)+C_0(x)\bigr) H_i(x;n) + \Phi H_i'(x;n) .\label{RR_H}
\end{eqnarray}
The coefficients $M_{k,i}(x;n)$, satisfy the following initial condition, for $n\geq 0$,
\begin{align}
M_{0,1}(x;n)=&-\frac{1}{2}\left(C_{n+1}(x)- C_{0}(x)\right), \label{M01}
\end{align}
and recurrence relations for $i\geq 1$, and $n\geq 0$,
\begin{align}
M_{0,i+1}(x;n) = & \Phi(x) M_{0,i}' (x;n)+ D_0(x) G_{1,i}(x;n) - D_n(x) H_i(x;n),\label{RR_M0}\\
M_{k,i+1}(x;n) = &\Phi(x) M_{k-1,i}(x;n) + \Phi(x) M_{k,i}'(x;n),\  1 \le k \le i, \label{RR_Mk}\\
M_{i,i}(x;n) = & \Phi^i(x). \label{RR_Mii}
\end{align}
\end{theorem}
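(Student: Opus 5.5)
Induction on $i$ is the natural route. The base case $i=1$ is the fundamental structure relation \eqref{R4} of Proposition~\ref{proposition-FSR}. Moving $\frac12(C_{n+1}-C_0)P_{n+1}$ to the right-hand side, we rewrite \eqref{R4} as
\[
B_0P_n^{(1)}-\gamma_{n+1}D_{n+1}P_n=\Phi P_{n+1}'-\tfrac12\bigl(C_{n+1}-C_0\bigr)P_{n+1}.
\]
This is exactly \eqref{ST_RR_i} for $i=1$, with $G_{0,1}=0$, $G_{1,1}=B_0$, $H_1=-\gamma_{n+1}D_{n+1}$, $M_{1,1}=\Phi$ and $M_{0,1}=-\frac12(C_{n+1}-C_0)$. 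These are \eqref{G01}--\eqref{H1}, \eqref{M01} and \eqref{RR_Mii} for $i=1$.

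For the inductive step, assume \eqref{ST_RR_i} holds for some $i\ge1$, with $F_i$ given by \eqref{RR_F}. We differentiate the whole identity with respect to $x$ and multiply by $\Phi(x)$. On the left-hand side we get
\[
\Phi G_{0,i}'P_{n-1}^{(1)}+\Phi G_{1,i}'P_n^{(1)}+\Phi H_i'P_n
\]
together with the three terms $G_{0,i}\,\Phi(P_{n-1}^{(1)})'$, $G_{1,i}\,\Phi(P_n^{(1)})'$ and $H_i\,\Phi P_n'$. Each of these three products is replaced using Proposition~\ref{Lemma-P}:
\begin{itemize}
\item \eqref{R1} for $\Phi(P_{n-1}^{(1)})'$;
\item \eqref{R3} for $\Phi(P_n^{(1)})'$;
\item \eqref{R2} for $\Phi P_n'$.
\end{itemize}
After substitution the left-hand side is a combination of $P_{n-1}^{(1)}$, $P_n^{(1)}$, $P_n$ and $P_{n+1}$. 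We then collect coefficients.
\begin{itemize}
\item The coefficient of $P_{n-1}^{(1)}$ is $\Phi G_{0,i}'-\frac12(C_{n+1}-C_0)G_{0,i}-\gamma_{n+1}D_{n+1}G_{1,i}+B_0H_i$, which is \eqref{RR_G0}.
\item The coefficient of $P_n^{(1)}$ is $\Phi G_{1,i}'+D_nG_{0,i}+\frac12(C_{n+1}+C_0)G_{1,i}$, which is \eqref{RR_G1}.
\item The coefficient of $P_n$ is $\Phi H_i'-D_0G_{0,i}-\frac12(C_{n+1}+C_0)H_i$, which is \eqref{RR_H}.
\end{itemize}
The fourth group is a $P_{n+1}=P_{n+1}^{[0]}$ term, with coefficient $-D_0G_{0,i}$ from \eqref{R1}, $-D_0G_{1,i}$ from \eqref{R3}, and $+D_nH_i$ from \eqref{R2}.

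On the right-hand side, $\Phi F_i'$ expands by the Leibniz rule as
\[
\Phi F_i'=\sum_{k}\bigl(\Phi M_{k,i}'P_{n+1}^{[k]}+\Phi M_{k,i}P_{n+1}^{[k+1]}\bigr).
\]
Re-indexing gives coefficient $\Phi M_{k-1,i}+\Phi M_{k,i}'$ in front of $P_{n+1}^{[k]}$, which is \eqref{RR_Mk}. The top coefficient is $\Phi\cdot\Phi^i=\Phi^{i+1}$, consistent with \eqref{RR_Mii}; indeed $\Phi M_{i,i}'$ contributes to index $i$, not $i+1$. We then move the $P_{n+1}$ terms from the left-hand side to the right. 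The coefficient of $P_{n+1}^{[0]}$ becomes $\Phi M_{0,i}'+D_0G_{1,i}-D_nH_i$, up to the handling of the $G_{0,i}$ contribution. This should give \eqref{RR_M0}.

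The main obstacle is sign and bookkeeping consistency, in particular the stray term $-D_0G_{0,i}P_{n+1}$ coming from \eqref{R1}, which \eqref{RR_M0} does not display. I would first check whether \eqref{R1} was meant with $D_0P_n^{(1)}$ or whether the term must be absorbed. Alternatively, one can use $P_{n+1}=(x-\beta_n)P_n-\gamma_nP_{n-1}$ and the recurrence of the associated polynomials to redistribute terms. I would also verify the signs of \eqref{R2}, \eqref{R3} and the case $n=0$, where $P_{-1}^{(1)}=0$, against \eqref{RR_M0}, since the stated recurrences must match this substitution exactly.
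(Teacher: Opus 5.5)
Your strategy is the same as the paper's: induction, base case from \eqref{R4}, then differentiate, multiply by $\Phi$, substitute \eqref{R1}--\eqref{R3}, collect, and move the $P_{n+1}$ residue to the right. Your coefficients of $P_{n-1}^{(1)}$, $P_n^{(1)}$, $P_n$ and of $P_{n+1}^{[k]}$ for $k\ge 1$ are correct.

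The gap is in the last step. You leave the proof unfinished because of a ``stray term'' $-D_0G_{0,i}P_{n+1}$ attributed to \eqref{R1}, but that term does not exist. The last term of \eqref{R1} is $-D_0(x)P_n(x)$, not $-D_0P_{n+1}$. So $G_{0,i}\,\Phi\bigl(P_{n-1}^{(1)}\bigr)'$ contributes $-D_0G_{0,i}$ only to the coefficient of $P_n$. You already used that contribution, correctly, when you obtained \eqref{RR_H}; putting it into the $P_{n+1}$ group as well counts it twice.

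The only $P_{n+1}$ contributions are $+D_nH_i$ from \eqref{R2} and $-D_0G_{1,i}$ from \eqref{R3}. After differentiation the identity therefore reads
\[
G_{0,i+1}P_{n-1}^{(1)}+G_{1,i+1}P_n^{(1)}+H_{i+1}P_n+\bigl(D_nH_i-D_0G_{1,i}\bigr)P_{n+1}=\Phi F_i' .
\]
Moving the last term to the right gives $M_{0,i+1}=\Phi M_{0,i}'+D_0G_{1,i}-D_nH_i$, which is exactly \eqref{RR_M0}. Nothing is left over, and this closes the induction.

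You should drop the remedies you considered, because each would break something:
\begin{itemize}
\item Replacing $D_0P_n$ by $D_0P_n^{(1)}$ in \eqref{R1} would destroy the agreement with \eqref{RR_G1} and \eqref{RR_H} that you had just established.
\item Rewriting $P_{n+1}$ through the three-term recurrence would introduce $P_{n-1}$, which is not one of the functions allowed in \eqref{ST_RR_i}--\eqref{RR_F}.
\end{itemize}
No separate check at $n=0$ is needed either. Relations \eqref{R1}--\eqref{R3} hold for all $n\ge 0$ with the usual convention $P_{-1}^{(1)}=0$; for instance, \eqref{R1} at $n=0$ reduces to $0=D_0-D_0$.
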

%%%%%%%%%%%%%%%%%%%%%%%%%%%%%%%%%%%%%%%%%%%%%%%%%%
\begin{proof}
We proceed by induction on $i\ge 1$. For $i=1$, rewriting the structure relation \eqref{R4} as
\[
B_0(x)P_n^{(1)}(x)-\gamma_{n+1}D_{n+1}(x)P_n(x)
=\Phi(x)P_{n+1}'(x)-\tfrac12\big(C_{n+1}(x)-C_0(x)\big)P_{n+1}(x),
\]
and noting that $P_{n+1}^{[1]}=P_{n+1}'$, we recover exactly \eqref{ST_RR_i}-\eqref{RR_F} with $G_{0,1}=0$, $G_{1,1}=B_0$, $H_1=-\gamma_{n+1}D_{n+1}$, and $M_{0,1}=-\tfrac12(C_{n+1}-C_0)$, in agreement with \eqref{G01}--\eqref{M01}.\\
Assume \eqref{ST_RR_i}-\eqref{RR_F}  holds for some $i\ge1$:
\[
G_{0,i}P_{n-1}^{(1)}+G_{1,i}P_n^{(1)}+H_iP_n=F_i,
\qquad
F_i=\Phi^iP_{n+1}^{[i]}+\sum_{k=0}^{i-1}M_{k,i}P_{n+1}^{[k]}.
\]
Differentiating \eqref{ST_RR_i}-\eqref{RR_F}  and multiplying by $\Phi$ yields
\[
\Phi G_{0,i}'P_{n-1}^{(1)}+G_{0,i}\,\Phi\big(P_{n-1}^{(1)}\big)'
+\Phi G_{1,i}'P_n^{(1)}+G_{1,i}\,\Phi\big(P_n^{(1)}\big)'
+\Phi H_i'P_n+H_i\,\Phi P_n'
=\Phi F_i'.
\]
Substituting $\Phi(P_{n-1}^{(1)})'$, $\Phi(P_n^{(1)})'$, and $\Phi P_n'$ by their expressions \eqref{R1}--\eqref{R3} expresses the left-hand side, after regrouping, as a linear combination of $P_{n-1}^{(1)}$, $P_n^{(1)}$, $P_n$, and $P_{n+1}$ alone, with coefficients
\[
G_{0,i+1}=-\tfrac12(C_{n+1}-C_0)G_{0,i}-\gamma_{n+1}D_{n+1}G_{1,i}+B_0H_i+\Phi G_{0,i}',
\]
\[
G_{1,i+1}=D_nG_{0,i}+\tfrac12(C_{n+1}+C_0)G_{1,i}+\Phi G_{1,i}',
\]
\[
H_{i+1}=-D_0G_{0,i}-\tfrac12(C_{n+1}+C_0)H_i+\Phi H_i',
\]
respectively; multiplying $P_{n-1}^{(1)}$, $P_n^{(1)}$, and $P_n$, together with a residual term $(D_0G_{1,i}-D_nH_i)P_{n+1}$ arising from \eqref{R2}--\eqref{R3}, which is transferred to the right-hand side.

On the right-hand side, differentiating $F_i=\Phi^iP_{n+1}^{[i]}+\sum_{k=0}^{i-1}M_{k,i}P_{n+1}^{[k]}$ and re-indexing gives
\[
\Phi F_i'=\Phi^{i+1}\,P_{n+1}^{[i+1]}
+\sum_{k=1}^{i}\big(\Phi M_{k-1,i}+\Phi M_{k,i}'\big)P_{n+1}^{[k]}
+\Phi M_{0,i}'\,P_{n+1},
\]
where the boundary term $k=i$ is understood with $M_{i,i}=\Phi^i$.  Absorbing the residual term $(D_0G_{1,i}-D_nH_i)P_{n+1}$ into the $k=0$ coefficient produces
\[
F_{i+1}=\Phi^{i+1}P_{n+1}^{[i+1]}+\sum_{k=0}^{i}M_{k,i+1}P_{n+1}^{[k]},
\]
with
\[
M_{k,i+1}=\Phi M_{k-1,i}+\Phi M_{k,i}' \quad (1\le k\le i),
\qquad
M_{0,i+1}=\Phi M_{0,i}'+D_0G_{1,i}-D_nH_i.
\]

Altogether, this is precisely the relation \eqref{ST_RR_i}-\eqref{RR_F} for $i+1$
\[
G_{0,i+1}P_{n-1}^{(1)}+G_{1,i+1}P_n^{(1)}+H_{i+1}P_n=F_{i+1},
\]
together with the stated recurrences for $G_{0,i+1}$, $G_{1,i+1}$, $H_{i+1}$, $F_{i+1}$, and the $M_{k,i+1}$. This closes the induction, and \eqref{ST_RR_i}-\eqref{RR_F} holds for every $i\ge1$.
\end{proof}
%%%%%%%%%%%%%%%%%%%%%%%%%%%%%%%%%%%%%%%%%%%%%%%%%%

\begin{remark}
In particular, for \(i=1\), \(i=2\), \(i=3\), and \(i=4\), the four structure relations given in \cite[Theorem 4]{Article-1-NA} are recovered.
\end{remark}

Now, from this theorem, we will determine a homogeneous linear differential equation of arbitrary fixed order $N\geq 4$ for Laguerre-Hahn orthogonal polynomials. The main idea is to fix four integers $i_1$, $i_2$, $i_3$, and $i_4$, such that
\[
1\leq i_1<i_2<i_3<i_4:=N,
\]
for which we will obtain the corresponding four structure relations
\begin{align}
G_{0,i_1}(x;n)P^{(1)}_{n-1}(x)+G_{1,i_1}(x;n)P^{(1)}_{n}(x)+H_{i_1}(x;n) P_{n}(x)=F_{i_1}(x;n), \label{(4.2.5.1)S1}\\
G_{0,i_2}(x;n)P^{(1)}_{n-1}(x)
+ G_{1,i_2}(x;n)P^{(1)}_{n}(x)
+ H_{i_2}(x;n)P_{n}(x)
= F_{i_2}(x;n), \label{(4.2.5.1)S2}\\
G_{0,i_3}(x;n)P^{(1)}_{n-1}(x)+G_{1,i_3}(x;n)P^{(1)}_{n}(x)+H_{i_3}(x;n) P_{n}(x)=F_{i_3}(x;n), \label{(4.2.5.1)S3}\\
G_{0,i_4}(x;n)P^{(1)}_{n-1}(x)+G_{1,i_4}(x;n)P^{(1)}_{n}(x)+H_{i_4}(x;n) P_{n}(x)=F_{i_4}(x;n). \label{(4.2.5.1)S4}
\end{align}

These four relations will serve as the foundation for the next theorem, which provides the central result of this work.

%%%%%%%%%%%%%%%%%%%%%%%%%%%%%%%%%%%%%%%%%%%%%%%%%
\begin{theorem}\label{th:0.2}
Let $N$ be a fixed integer such that \(N \geq 4\). Let  $i_1$, $i_2$, $i_3$, and $i_4$ be four fixed integers such that 
\[
1 \leq i_1 < i_2 < i_3 < i_4:=N.
\]
If \(\{P_n\}_{n \geq 0}\) is a Laguerre-Hahn MOPS, then it satisfies the following
homogeneous linear differential equation of order \(N\) 
\begin{equation}\label{N_diff_LH}
\sum_{d=0}^{i_4} \mathcal{K}_d(x;n) P_{n+1}^{[d]}(x) = 0,
\end{equation}
where the coefficients $\mathcal{K}_d$ are given by :
\begin{align}
\mathcal{K}_{i_4} &= \Phi^{i_4} \Delta_4, \label{Ki4} \\[4pt]
\mathcal{K}_{i_3} &= M_{i_3,i_4} \Delta_4 - \Phi^{i_3} \Delta_3, \label{Ki3}  \\[4pt]
\mathcal{K}_{i_2} &= M_{i_2,i_4} \Delta_4 - M_{i_2,i_3} \Delta_3 + \Phi^{i_2} \Delta_2, \label{Ki2} \\[4pt]
\mathcal{K}_{i_1} &= M_{i_1,i_4} \Delta_4 - M_{i_1,i_3} \Delta_3 + M_{i_1,i_2} \Delta_2 - \Phi^{i_1} \Delta_1,\label{Ki1} \\[4pt]
\mathcal{K}_d &= M_{d,i_4} \Delta_4 - M_{d,i_3} \Delta_3 + M_{d,i_2} \Delta_2 - M_{d,i_1} \Delta_1 \quad \text{for} ~~ d \notin 
\{i_1,i_2,i_3,i_4\}, \label{Kid}
\end{align}
with 
\begin{eqnarray}
\Delta_4 &=& G_{0,i_1}\bigl(G_{1,i_2}H_{i_3} - G_{1,i_3}H_{i_2}\bigr) - G_{1,i_1}\bigl(G_{0,i_2}H_{i_3} - G_{0,i_3}H_{i_2}\bigr)  \label{Delta4}\\
&&+ H_{i_1}\bigl(G_{0,i_2}G_{1,i_3}- G_{0,i_3}G_{1,i_2}\bigr),\notag \\
\Delta_3 &=& G_{0,i_1}\bigl(G_{1,i_2}H_{i_4} - G_{1,i_4}H_{i_2}\bigr) - G_{1,i_1}\bigl(G_{0,i_2}H_{i_4} - G_{0,i_4}H_{i_2}\bigr)  \label{Delta3}\\
&&+ H_{i_1}\bigl(G_{0,i_2}G_{1,i_4}- G_{0,i_4}G_{1,i_2}\bigr), \notag\\
\Delta_2 &= &G_{0,i_1}\bigl(G_{1,i_3}H_{i_4} - G_{1,i_4}H_{i_3}\bigr) - G_{1,i_1}\bigl(G_{0,i_3}H_{i_4} - G_{0,i_4}H_{i_3}\bigr)  \label{Delta2}\\
&&+ H_{i_1}\bigl(G_{0,i_3}G_{1,i_4}- G_{0,i_4}G_{1,i_3}\bigr), \notag\\
\Delta_1 &=& G_{0,i_2}\bigl(G_{1,i_3}H_{i_4} - G_{1,i_4}H_{i_3}\bigr) - G_{1,i_2}\bigl(G_{0,i_3}H_{i_4} - G_{0,i_4}H_{i_3}\bigr) \label{Delta1}\\
&&+ H_{i_2}\bigl(G_{0,i_3}G_{1,i_4} - G_{0,i_4}G_{1,i_3}\bigr). \notag
\end{eqnarray}
\end{theorem}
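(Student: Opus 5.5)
The plan is to treat the four relations \eqref{(4.2.5.1)S1}--\eqref{(4.2.5.1)S4}, obtained from Theorem~\ref{th:0.1} for $i=i_1,\dots,i_4$, as a linear system in the three unknowns $P^{(1)}_{n-1}$, $P^{(1)}_{n}$, $P_n$. Its $4\times 4$ augmented matrix has rows $\bigl(G_{0,i_j},\,G_{1,i_j},\,H_{i_j},\,F_{i_j}\bigr)$, $j=1,\dots,4$. The first three columns are linearly dependent over the field of rational functions, because the column vector $(P^{(1)}_{n-1},P^{(1)}_{n},P_n,-1)^T$ lies in the kernel of the full matrix. Hence the determinant of this augmented matrix vanishes identically in $x$. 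This determinant is a polynomial identity, so no division is needed and the argument is valid even where some minors vanish.

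Next, we expand the determinant along the last column (the $F$'s). The cofactor of $F_{i_j}$ is $(-1)^{j+4}$ times the $3\times3$ minor obtained by deleting row $j$. Comparing with \eqref{Delta4}--\eqref{Delta1}:
\begin{itemize}
\item deleting row $4$ gives $\Delta_4$;
\item deleting row $3$ gives $\Delta_3$;
\item deleting row $2$ gives $\Delta_2$;
\item deleting row $1$ gives $\Delta_1$.
\end{itemize}
Each of these minors has been expanded along its first row, which is what the displayed formulas are. The vanishing determinant therefore becomes
\[
F_{i_4}\Delta_4-F_{i_3}\Delta_3+F_{i_2}\Delta_2-F_{i_1}\Delta_1=0 .
\]
Multiplying by $-1$ does not matter, so the sign pattern $+,-,+,-$ attached to $\Delta_4,\Delta_3,\Delta_2,\Delta_1$ is consistent with the statement.

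Finally, we substitute \eqref{RR_F} for each $F_{i_j}$ and collect the coefficient of $P^{[d]}_{n+1}$ for $0\le d\le i_4$. To present this uniformly, we adopt the convention $M_{i,i}=\Phi^i$ (as in \eqref{RR_Mii}) and $M_{d,i}=0$ for $d>i$. Then $F_{i_j}=\sum_{d=0}^{i_j}M_{d,i_j}P^{[d]}_{n+1}$, and the coefficient of $P^{[d]}_{n+1}$ is $M_{d,i_4}\Delta_4-M_{d,i_3}\Delta_3+M_{d,i_2}\Delta_2-M_{d,i_1}\Delta_1$. This yields \eqref{Kid} when $d<i_1$ or $d$ lies strictly between the indices. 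In the special cases $d=i_j$, the term $M_{i_j,i_j}$ becomes $\Phi^{i_j}$ and all terms with $i_k<d$ drop out, which gives \eqref{Ki4}--\eqref{Ki1}. For example, at $d=i_4$ only $\Phi^{i_4}\Delta_4$ survives. Thus \eqref{N_diff_LH} holds.

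The main obstacle is mild bookkeeping: matching the cofactor signs and the orientation of each $3\times3$ minor with the exact formulas \eqref{Delta4}--\eqref{Delta1}. One must check that each displayed $\Delta_j$ equals the minor with rows in increasing order of index, so that the alternating signs come out as stated. A separate point is that the equation has genuine order $N$ only if $\Phi^{N}\Delta_4\not\equiv0$, that is, only if the relations for $i_1,i_2,i_3$ are independent. The statement does not assert nondegeneracy, so I would only remark on it and not prove it.
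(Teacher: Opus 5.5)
Your proposal is correct and follows the paper's proof essentially step for step: the $4\times4$ augmented determinant of \eqref{(4.2.5.1)S1}--\eqref{(4.2.5.1)S4} vanishes, it is expanded along the $F$-column, and \eqref{RR_F} is substituted and collected by order $d$. Your sign bookkeeping, $F_{i_4}\Delta_4-F_{i_3}\Delta_3+F_{i_2}\Delta_2-F_{i_1}\Delta_1=0$, is in fact cleaner than the paper's, which writes the negative of this identity and then silently flips the sign when reading off the $\mathcal{K}_d$.

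One wording fix: the nonzero kernel vector $(P^{(1)}_{n-1},P^{(1)}_n,P_n,-1)^T$ shows that the four columns are dependent (the fourth lies in the span of the first three), not that the first three are. If the first three were dependent, every $\Delta_j$ would vanish and the equation would collapse to $0=0$. That is an extreme form of the degeneracy you rightly flag, and it is exactly what happens in the semiclassical case, where all $G$'s vanish.
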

%%%%%%%%%%%%%%%%%%%%%%%%%%%%%%%%%%%%%%%%%%%%%%%%%%
\begin{proof}
The $N$-th order differential equation is expressed in determinantal form from the system
\eqref{(4.2.5.1)S1}--\eqref{(4.2.5.1)S4}, that is,
\begin{equation}\label{BigDet}
\begin{vmatrix}
G_{0,i_1}(x;n) & G_{1,i_1}(x;n) & H_{i_1}(x;n) & F_{i_1}(x;n)\\
G_{0,i_2}(x;n) & G_{1,i_2}(x;n) & H_{i_2}(x;n) & F_{i_2}(x;n)\\
G_{0,i_3}(x;n) & G_{1,i_3}(x;n) & H_{i_3}(x;n) & F_{i_3}(x;n)\\
G_{0,i_4}(x;n) & G_{1,i_4}(x;n) & H_{i_4}(x;n) & F_{i_4}(x;n)
\end{vmatrix}=0,\quad n\geq 0.
\end{equation}

Expanding the determinant along the fourth column yields
\begin{equation}\label{StarEq}
\Delta_1(x;n)F_{i_1}(x;n)
-\Delta_2(x;n)F_{i_2}(x;n)
+\Delta_3(x;n)F_{i_3}(x;n)
-\Delta_4(x;n)F_{i_4}(x;n)=0,\ n\geq 0,
\end{equation}
where the polynomials $\Delta_k(x;n)$, for $1\leq k\leq 4$, are given by \eqref{Delta4}--\eqref{Delta1}, and depend only on the coefficients $G_{0,i_j}$, $G_{1,i_j}$, and $H_{i_j}$ ($1\le j\le 4$); in particular they do not involve $P_{n+1}$ or any of its derivatives.

We now substitute, in \eqref{StarEq}, the expressions of $F_{i_j}(x;n)$ given by Theorem~\ref{th:0.1}, namely
\begin{equation}\label{FijExpansion}
F_{i_j}(x;n)=\Phi^{i_j}(x)P_{n+1}^{[i_j]}(x)+\sum_{k=0}^{i_j-1}M_{k,i_j}(x;n)P_{n+1}^{[k]}(x),
\qquad j=1,2,3,4,
\end{equation}
with the convention $M_{d,i_j}\equiv 0$ for $d\geq i_j$, which allows every sum below to be indexed uniformly over $0\le d\le i_4$. Inserting \eqref{FijExpansion} into \eqref{StarEq} and collecting, for each fixed order $d$ with $0\le d\le i_4$, all the terms multiplying $P_{n+1}^{[d]}(x)$, we obtain
\begin{equation}\label{KdSum}
\sum_{d=0}^{i_4}K_d(x;n)\,P_{n+1}^{[d]}(x)=0,\quad n\geq 0,
\end{equation}
where $K_d$ collects, with the alternating signs inherited from \eqref{StarEq}, the contribution of order $d$ coming from each of the four expansions \eqref{FijExpansion}. Two types of contributions occur for a given $j$:

\begin{itemize}
\item if $d=i_j$, the term of order $d$ in $F_{i_j}$ is the \emph{leading} one, $\Phi^{i_j}P_{n+1}^{[i_j]}$, contributing $\pm\Phi^{i_j}\Delta_j$;
\item if $d<i_j$, the term of order $d$ in $F_{i_j}$ is the \emph{subordinate} one, $M_{d,i_j}P_{n+1}^{[d]}$, contributing $\pm M_{d,i_j}\Delta_j$;
\item if $d>i_j$, the expansion \eqref{FijExpansion} contains no term of order $d$ (equivalently $M_{d,i_j}\equiv0$), so $F_{i_j}$ contributes nothing to $K_d$.
\end{itemize}

Applying this to $d=i_4$: only $F_{i_4}$ has a term of order $i_4$ (its leading term), since $i_1,i_2,i_3<i_4$; hence
\[
K_{i_4}=\Phi^{i_4}\Delta_4.
\]

For $d=i_3$: $F_{i_3}$ contributes its leading term $\Phi^{i_3}P_{n+1}^{[i_3]}$ (with sign $+\Delta_3$ in \eqref{StarEq}), while $F_{i_4}$ contributes its subordinate term $M_{i_3,i_4}P_{n+1}^{[i_3]}$ (with sign $-\Delta_4$); $F_{i_1}$ and $F_{i_2}$ contribute nothing since $i_1,i_2<i_3$. Hence
\[
K_{i_3}=M_{i_3,i_4}\Delta_4-\Phi^{i_3}\Delta_3.
\]

For $d=i_2$: $F_{i_2}$ contributes its leading term ($\Phi^{i_2}\Delta_2$), while $F_{i_3}$ and $F_{i_4}$ contribute their respective subordinate terms of order $i_2$, namely $M_{i_2,i_3}$ and $M_{i_2,i_4}$, each carrying the sign attached to $F_{i_3}$ and $F_{i_4}$ in \eqref{StarEq}. This gives
\[
K_{i_2}=M_{i_2,i_4}\Delta_4-M_{i_2,i_3}\Delta_3+\Phi^{i_2}\Delta_2.
\]

For $d=i_1$: similarly, $F_{i_1}$ contributes its leading term ($-\Phi^{i_1}\Delta_1$), and $F_{i_2}, F_{i_3}, F_{i_4}$ each contribute a subordinate term of order $i_1$, so that
\[
K_{i_1}=M_{i_1,i_4}\Delta_4-M_{i_1,i_3}\Delta_3+M_{i_1,i_2}\Delta_2-\Phi^{i_1}\Delta_1.
\]

Finally, for any order $d\notin\{i_1,i_2,i_3,i_4\}$, none of the four $F_{i_j}$ has a leading term of order $d$; every contribution is subordinate, coming from each $F_{i_j}$ such that $i_j>d$. Hence
\[
K_d=M_{d,i_4}\Delta_4-M_{d,i_3}\Delta_3+M_{d,i_2}\Delta_2-M_{d,i_1}\Delta_1,
\]
where, consistently with the convention above, any $M_{d,i_j}$ with $i_j\le d$ is set to $0$.

Substituting these coefficients into \eqref{KdSum} yields exactly
\[
\sum_{d=0}^{i_4}K_d(x;n)\,P_{n+1}^{[d]}(x)=0,\quad n\geq 0,
\]
which is the announced differential equation~\eqref{N_diff_LH}. Since $K_{i_4}=\Phi^{i_4}\Delta_4$ multiplies the highest-order term $P_{n+1}^{[i_4]}$, this identity is a differential equation of order $N=i_4$ satisfied by $P_{n+1}$, which completes the proof.
\end{proof}
%%%%%%%%%%%%%%%%%%%%%%%%%%%%%%%%%%%%%%%%%%%%%%%%%%

\begin{remark} For the particular case $N=4$, there exists only one choice for the integers \(i_1\), $i_2$, and $i_3$ ($i_4=4$)
namely $(i_1,i_2,i_3,i_4)=(1,2,3,4)$, for which the following formulas given in \cite[Theorem 5]{Article-1-NA} are recovered
\[
\begin{aligned}
\mathcal{K}_4 &= \Phi^4 \Delta_4 = \mathcal{A}, \\
\mathcal{K}_3 &= M_{3,4} \Delta_4 - \Phi^3 \Delta_3 = \mathcal{B}, \\
\mathcal{K}_2 &= M_{2,4} \Delta_4 - M_{2,3} \Delta_3 + \Phi^2 \Delta_2 = \mathcal{C}, \\
\mathcal{K}_1 &= M_{1,4} \Delta_4 - M_{1,3} \Delta_3 + M_{1,2} \Delta_2 - \Phi \Delta_1 = \mathcal{D}, \\
\mathcal{K}_0 &= M_{0,4} \Delta_4 - M_{0,3} \Delta_3 + M_{0,2} \Delta_2 - M_{0,1} \Delta_1 = \mathcal{E}.
\end{aligned}
\]
\end{remark}

\begin{remark} 
For a fixed order \(N>4\), there exists several possibilities for the integers \(i_1\), $i_2$, and $i_3$ ($i_4=N$). 
One should begin by fixing a choice for those three integers; then, by applying the two preceding theorems, we get a specific differential equation of order \(N\).

For example, for \(N=5\) (\(i_4=5\)), there exists \(4\) possibilities for \((i_1,i_2,i_3)\) :
\[
(i_1,i_2,i_3)\in 
\Big\{
(1,2,3),\;
(1,2,4),\;
(1,3,4),\;
(2,3,4)
\Big\}.
\]

For each choice, one gets a different differential equation of order \(N=5\), as is illustrated in section \ref{Section5} for several examples.
\end{remark} 

%%%%%%%%%%%%%%%%%%%%%%%%%%%%%%%%%%%%%%%%%%%%%%%%%
\subsection{The semiclassical case} \label{Section3.1}
%%%%%%%%%%%%%%%%%%%%%%%%%%%%%%%%%%%%%%%%%%%%%%%%%

Theorem \ref{th:0.2} provides a general framework for deriving differential equations satisfied by any strict Laguerre-Hahn polynomial sequence. 
In what follows, we examine the semiclassical situation, which includes the classical case as a particular instance. 
In fact, in the semiclassical case, i.e., when $B_0(x) = 0$, all the coefficients $G_{i,j}$  in Theorem~\ref{th:0.1} vanish.
Consequently, the entries in the first two columns of the determinant \eqref{BigDet} are identically zero, and the resulting differential equation in Theorem \ref{th:0.2} reduces to the trivial identity $0=0$. 
Therefore, one must substitute $B_0(x)=0$ directly into the structure relations of Theorem 
\ref{th:0.1} (see Corollary \ref{Cor_STR_SC}), and deduce the corresponding differential equations from these reduced relations (see Proposition \ref{Cor_DEij_SC}). 

%%%%%%%%%%%%%%%%%%%%%%%%%%%%%%%%%%%%%%%%%%%%%%%%%
\begin{corollary}\label{Cor_STR_SC}
If \(\{P_n\}_{n \geq 0}\) is a semiclassical monic orthogonal polynomial sequence, then the coefficients of the structure relation \eqref{ST_RR_i} satisfy the following initial conditions and recurrence relations for $i\geq 1$ and $n\geq 0$,
\begin{eqnarray}
G_{0,i}(x;n)& =&0,   \label{SC-G0i}\\
G_{1,i}(x;n) & =&0,  \label{SC-G1i}\\
H_{1}(x;n)&=&-\gamma_{n+1}D_{n+1}(x), \label{SC-H1}\\
H_{i+1}(x;n) &= &- \frac12\bigl(C_{n+1}(x)+C_0(x)\bigr) H_i (x;n)+ \Phi(x) H_i' (x;n),\label{SC-RR_H}\\
M_{0,1}(x;n) &=&-\frac{1}{2}\left(C_{n+1}(x)- C_{0}(x)\right),  \label{SC-M01}\\
M_{0,i+1}(x;n) & = & \Phi(x) M_{0,i}'(x;n)- D_n(x) H_i(x;n),\label{SC-RR_M0}\\
M_{k,i+1}(x;n) &= &\Phi(x) M_{k-1,i}(x;n) + \Phi(x) M_{k,i}'(x;n),\  1 \le k \le i. \label{SC-RR_Mk}
\end{eqnarray}

Consequently, the structure relation of index $i\geq 1$ takes the form
\begin{equation}\label{SRi_SC}
H_i(x;n) P_n(x) = \Phi^i(x)P_{n+1}^{[i]}(x) +\sum_{k=0}^{i-1} M_{k,i}(x;n) P_{n+1}^{[k]}(x), \ n\geq 0. 
\end{equation}
%%%%%%%%%%%%%%%%%%%%%%%%%%%%%%%%%%%%%%%%%%%%%%%%%
\end{corollary}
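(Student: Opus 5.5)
The plan is to specialize Theorem~\ref{th:0.1} to $B_0=B=0$, since the semiclassical case is exactly $B=0$ in \eqref{Riccati}. The recurrences of that theorem then remain valid verbatim with this substitution, and only need to be simplified.

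First I would prove \eqref{SC-G0i}--\eqref{SC-G1i} by induction on $i$. For $i=1$, \eqref{G01} gives $G_{0,1}=0$ and \eqref{G11} gives $G_{1,1}=B_0=0$. For the inductive step, suppose $G_{0,i}=G_{1,i}=0$ for some $i\ge1$. The right-hand side of \eqref{RR_G0} is
\[
-\tfrac12(C_{n+1}-C_0)G_{0,i}-\gamma_{n+1}D_{n+1}G_{1,i}+B_0H_i+\Phi G_{0,i}'.
\]
Each term has a factor $G_{0,i}$, $G_{1,i}$, $G_{0,i}'$ or $B_0$, so it vanishes. The right-hand side of \eqref{RR_G1} contains only $G_{0,i}$, $G_{1,i}$ and $G_{1,i}'$, so it also vanishes. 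Hence $G_{0,i+1}=G_{1,i+1}=0$.

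Next I would plug these vanishings into the remaining recurrences.
\begin{itemize}
\item \eqref{RR_H} loses its $-D_0G_{0,i}$ term and becomes \eqref{SC-RR_H}.
\item \eqref{RR_M0} loses its $D_0G_{1,i}$ term and becomes \eqref{SC-RR_M0}.
\item \eqref{H1}, \eqref{M01} and \eqref{RR_Mk} do not involve $B_0$ or $G$, so they are unchanged and give \eqref{SC-H1}, \eqref{SC-M01} and \eqref{SC-RR_Mk}. The normalization \eqref{RR_Mii} is also unchanged.
\end{itemize}
Finally, in \eqref{ST_RR_i} the terms multiplying $P_{n-1}^{(1)}$ and $P_n^{(1)}$ now have zero coefficients and drop out. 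Combining this with \eqref{RR_F} yields \eqref{SRi_SC}.

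There is no real obstacle here. The only point needing care is that Theorem~\ref{th:0.1} and the underlying relations \eqref{R4} and \eqref{R1}--\eqref{R3} hold for every Laguerre--Hahn sequence, so they apply when $B=0$. This is the case because Proposition~\ref{proposition-FSR} places no nonvanishing requirement on $B$: the initial condition is simply $\gamma_0D_{-1}=B=0$. As an independent check, for $i=1$ the relation \eqref{SRi_SC} reduces to \eqref{R4} with $B_0=0$, which is the classical semiclassical structure relation.
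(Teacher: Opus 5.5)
Your proposal is correct and follows the paper's proof essentially step for step. You specialize Theorem~\ref{th:0.1} to $B_0=0$, prove $G_{0,i}=G_{1,i}=0$ by induction on $i$ via \eqref{RR_G0}--\eqref{RR_G1}, and substitute these vanishings into \eqref{H1} and \eqref{RR_H}--\eqref{RR_Mk}; you are only slightly more explicit than the paper about the base case $G_{0,1}=0$ from \eqref{G01} and about how \eqref{SRi_SC} drops out of \eqref{ST_RR_i}--\eqref{RR_F}.
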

\begin{proof}
In the semiclassical case, $B_0(x)=0$. Hence, by \eqref{G11}, we obtain \eqref{SC-G1i} for $i=1$.
Now assume that \eqref{SC-G0i} and \eqref{SC-G1i} hold for some fixed $i$. Then, by the recurrence relations \eqref{RR_G0} and \eqref{RR_G1}, it follows that \eqref{SC-G0i} and \eqref{SC-G1i} also hold for $i+1$. This completes the inductive proof of \eqref{SC-G0i}-\eqref{SC-G1i}.
The identities \eqref{SC-H1}-\eqref{SC-RR_Mk} follow by applying \eqref{SC-G0i} and \eqref{SC-G1i} to the identities
\eqref{H1}, and  \eqref{RR_H}-\eqref{RR_Mk} from Theorem \ref{th:0.1}.
\end{proof}
%%%%%%%%%%%%%%%%%%%%%%%%%%%%%%%%%%%%%%%%%%%%%%%%%

For each $i\geq 2$, the next proposition establishes $i-1$ distinct linear differential equations of order $i$ by combining the structure relations corresponding to the indices $i$ and $j$, where $1\leq j<i$.

%%%%%%%%%%%%%%%%%%%%%%%%%%%%%%%%%%%%%%%%%%%%%%%%%
\begin{proposition}\label{Cor_DEij_SC}
If \(\{P_n\}_{n \geq 0}\) is a semiclassical monic orthogonal polynomial sequence, then it satisfies  the following  homogeneous linear differential equation of order $i\geq 2$, indexed by $j$,  where $1\leq j<i$:
\begin{equation}\label{i_diff_SC}
\sum_{d=0}^{i} \mathcal{K}_d(x;n) P_{n+1}^{[d]}(x) = 0,\quad  n\geq 0,
\end{equation}
where
\begin{eqnarray}
\mathcal{K}_i(x;n) & = & H_j(x;n)\Phi^i(x), \ d=i,\label{SC_Ki}\\
\mathcal{K}_d(x;n) & = & H_j(x;n)M_{d,i}(x;n), \ j+1\leq d \leq i-1, \label{SC_Kdj1i1}\\
\mathcal{K}_j(x;n) & = & H_j(x;n)M_{j,i}(x;n)-H_i(x;n)\Phi^j(x), \ d=j, \label{SC_Kdj}\\
\mathcal{K}_d(x;n) & = & H_j(x;n)M_{d,i}(x;n)-H_i(x;n)M_{d,j}(x;n), \ 0\leq d \leq j-1, \label{SC_Kd0j1}
\end{eqnarray}
with the coefficients $H_l(x;n)$ and $M_{k,l}(x;n)$ defined in Theorem \ref{th:0.1} under the condition $B_0(x)=0$.
%%%%%%%%%%%%%%%%%%%%%%%%%%%%%%%%%%%%%%%%%%%%%%%%%
\end{proposition}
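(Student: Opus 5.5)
The plan is to eliminate $P_n$ between two of the reduced structure relations of Corollary~\ref{Cor_STR_SC}. By \eqref{SRi_SC}, for the two indices $j<i$ we have
\[
H_j P_n = \Phi^j P_{n+1}^{[j]}+\sum_{k=0}^{j-1}M_{k,j}P_{n+1}^{[k]},\qquad
H_i P_n = \Phi^i P_{n+1}^{[i]}+\sum_{k=0}^{i-1}M_{k,i}P_{n+1}^{[k]} .
\]
Multiplying the second relation by $H_j$ and the first by $H_i$, and subtracting, cancels the term $H_iH_jP_n$. This leaves
\[
H_j\Bigl(\Phi^i P_{n+1}^{[i]}+\sum_{k=0}^{i-1}M_{k,i}P_{n+1}^{[k]}\Bigr)-H_i\Bigl(\Phi^j P_{n+1}^{[j]}+\sum_{k=0}^{j-1}M_{k,j}P_{n+1}^{[k]}\Bigr)=0 .
\]
This is the $2\times2$ analogue of the determinant \eqref{BigDet}, since in the semiclassical case the first two columns vanish and only the columns $H$ and $F$ remain.

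Next I collect the coefficient of each derivative $P_{n+1}^{[d]}$, using $M_{d,l}\equiv0$ for $d\ge l$, exactly as in the proof of Theorem~\ref{th:0.2}.
\begin{itemize}
\item For $d=i$, only the leading term of the $i$-relation contributes, giving \eqref{SC_Ki}.
\item For $j<d<i$, only $H_jM_{d,i}$ contributes, giving \eqref{SC_Kdj1i1}.
\item For $d=j$, the subordinate term $H_jM_{j,i}$ combines with the leading term $-H_i\Phi^j$, giving \eqref{SC_Kdj}.
\item For $d<j$, both subordinate terms appear, giving \eqref{SC_Kd0j1}.
\end{itemize}
This yields \eqref{i_diff_SC} for every $n\ge0$.

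The main obstacle is checking that the equation is genuinely of order $i$, i.e.\ that $\mathcal{K}_i=H_j\Phi^i\not\equiv0$. This is also what makes the $i-1$ choices of $j$ meaningful. Since $\Phi$ is monic and nonzero, it suffices to show $H_j\not\equiv 0$. I would argue by contradiction: if $H_j\equiv0$, the $j$-th relation would give a nontrivial differential equation of order $j$ with no $P_n$ term. Its leading coefficient is $\Phi^j$, so comparing top-degree coefficients in $x$ gives a polynomial identity in $n$ that cannot hold for all $n$.

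A simpler route is to track degrees. One has $H_1=-\gamma_{n+1}D_{n+1}$, and $D_{n+1}\neq0$ for a regular form, as follows from comparing leading terms in \eqref{R4}. The recurrence \eqref{SC-RR_H} then propagates nonvanishing, because the leading coefficient of $-\tfrac12(C_{n+1}+C_0)H_i+\Phi H_i'$ is generically nonzero. I expect this last verification to require a case discussion on the degrees of $\Phi$ and $C$. I would record it as a nondegeneracy remark rather than as part of the identity itself, since the identity \eqref{i_diff_SC} holds regardless of whether it is nontrivial.
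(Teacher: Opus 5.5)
Your proof is correct and is essentially the paper's. The paper also multiplies the index-$i$ relation \eqref{SRi_SC} by $H_j$ and replaces $H_jH_iP_n$ using $H_i$ times the index-$j$ relation. It then reads off the $\mathcal{K}_d$ by splitting into the cases $d=i$, $j<d<i$, $d=j$ and $d<j$, exactly as you do.

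Your nondegeneracy discussion ($H_j\not\equiv 0$) goes beyond what the paper proves and, as you say, is not needed for the identity. Your sketches for it are not conclusive as written:
\begin{itemize}
\item Comparing leading terms in \eqref{R4} alone does not exclude $D_{n+1}=0$.
\item An identity that fails for generic $n$ does not rule out $H_j(\cdot;n)\equiv 0$ for a particular $n$.
\end{itemize}
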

\begin{proof}
From the Corollary \ref{Cor_STR_SC}, we know that $G_{0,l}=0$ and $G_{1,l}=0$, which simplify the structure relations in Theorem \ref{th:0.1}.
Setting $i=j$ in \eqref{ST_RR_i} together with \eqref{RR_F} in that theorem, we obtain the following structure relation of index $j$
$$H_j(x;n) P_n(x) =\Phi^j(x) P_{n+1}^{[j]}(x) + \sum_{k=0}^{j-1} M_{k,j}(x;n) P_{n+1}^{[k]}(x),\ n\geq 0.$$
Multiplying both sides of the structure relation \eqref{ST_RR_i}-\eqref{RR_F} of index $i$ by $H_j(x;n)$ and substituting the above identity,  we obtain
\begin{eqnarray}
&& H_i(x;n) \left(\Phi^j(x) P_{n+1}^{[j]}(x) + \sum_{k=0}^{j-1} M_{k,j}(x;n) P_{n+1}^{[k]}(x)\right)= \notag\\
 &&H_j(x;n) \Phi^i(x) P_{n+1}^{[i]}(x) + H_j(x;n) \sum_{k=0}^{i-1} M_{k,i}(x;n) P_{n+1}^{[k]}(x), \ n\geq 0. \notag
\end{eqnarray}
Rearranging terms yields
\begin{eqnarray}
&&H_j(x;n) \Phi^i(x) P_{n+1}^{[i]}(x) + \sum_{k=j+1}^{i-1} H_j(x;n)M_{k,i}(x;n) P_{n+1}^{[k]}(x)\notag\\
&&+\left(H_j(x;n) M_{j,i}(x;n) -H_i(x;n)\Phi^j(x)\right) P_{n+1}^{[j]}(x)\notag\\
&&+ \sum_{k=0}^{j-1}\left(H_j(x;n)M_{k,i}(x;n)-H_i(x;n)M_{k,j}(x;n)\right)P_{n+1}^{[k]}(x)=0, \ n\geq 0. \notag
\end{eqnarray}
Comparing this equation with \eqref{i_diff_SC}, we obtain the desired expressions for the coefficients $\mathcal{K}_d(x;n)$.
\end{proof}

%%%%%%%%%%%%%%%%%%%%%%%%%%%%%%%%%%%%%%%%%%%%%%%%%%%%%%%%%
%%%%%%%%%%%%%%%%%%%%%%%%%%%%%%%%%%%%%%%%%%%%%%%%%%%%%%%%%
%%%%%%%%%%%%%%%%%%%%%%%%%%%%%%%%%%%%
\section{Algorithm for the symbolic computation of structure relations and differential equations}\label{Section4}
%%%%%%%%%%%%%%%%%%%%%%%%%%%%%%%%%%%%

The theoretical developments presented thus far provide a general and constructive method for deriving structure relations and differential equations satisfied by strict Laguerre–Hahn, semiclassical, and classical orthogonal polynomial sequences. This method is implemented in the following algorithm, named {\it HoDELH.nb}, developed in {\it Mathematica$^{\circledR}$}.

We emphasize that the coefficients of the structure relations do not admit common factors, since each structure relation is normalized by a coefficient equal to a power of the monic polynomial $\Phi(x)$. In contrast, the coefficients of the differential equations generally admit common factors. In such cases, the implementation factors them out and returns the corresponding reduced coefficients.

%%%%%%%%%%%%%%%%%%%%%%%%%%%%%%%%%%%%%%%%%%%%%%%%%%

\vspace{0.5cm}

\noindent {\bf Algorithm {\it HoDELH}} ({\it Higher-order Differential Equations for Laguerre-Hahn})

\vspace{0.25cm}

\begin{enumerate} 

\item {\bf Input Data}

\vspace{0.25cm}

- For Laguerre-Hahn sequences,

\vspace{0.25cm}

1.1. Coefficients of the recurrence relation \eqref{ic_TTRR}-\eqref{TTRR}: $\beta_{n}$, $\gamma_{n+1}$, $n\geq 0$.

\vspace{0.15cm}

1.2. Coefficients of the Stieltjes equation \eqref{Riccati}: $\Phi(z)$, $B(z)$, $C(z)$, and $D(z)$.

\vspace{0.15cm}

1.3. Coefficients of the structure relation \eqref{R4}: 

$B_0(x)=B(x)$,  $C_0(x)=C(x)$, $D_0(x)=D(x)$, $C_{n+1}(x)$, 
$D_{n+1}(x)$, $n\geq 0$.

\vspace{0.25cm}

- For strict Laguerre-Hahn sequences:

\vspace{0.15cm}

1.4.  The order $N\geq 4$ of the differential equation.

\vspace{0.15cm}

1.5. Three integers $i_1$, $i_2$, $i_3$, such that  $1\leq i_1 < i_2 < i_3 < i_4:=N$.

\vspace{0.25cm}

- For semiclassical and classical sequences:

\vspace{0.15cm}

1.6. An integer $l\geq 1$.

\vspace{0.15cm}

1.7. Two integers $i$, and $j$, such that $1\leq j < i-1$, $i\geq 2$.

\vspace{0.25cm}

\item {\bf Computation of structure relations for Laguerre-Hahn orthogonal polynomial sequences}

\vspace{0.25cm}

Compute the coefficients of the structure relation of index $i$, given by \eqref{ST_RR_i}--\eqref{RR_F} in Theorem~\ref{th:0.1}, from the input data in Items 1.1--1.3 by using the initial conditions and the recurrence relations \eqref{G01}--\eqref{RR_Mii}.

\vspace{0.25cm}

\item {\bf Computation of differential equations for strict Laguerre-Hahn orthogonal polynomial sequences}

\vspace{0.25cm}

3.1. Compute the coefficients of the structure relations of indices $i_1$, $i_2$, $i_3$, and $i_4=N$ using Step 2.

\vspace{0.15cm}

3.2. Compute the coefficients of the $N$th-order differential equation \eqref{N_diff_LH} in Theorem~\ref{th:0.2}, corresponding to the indices $i_1$, $i_2$, $i_3$, and $i_4=N$, from the coefficients obtained in Step 3.1 by using the initial conditions and the recurrence relations \eqref{Ki4}--\eqref{Kid} together with \eqref{Delta4}--\eqref{Delta1}.

\vspace{0.25cm}

% 4
\item {\bf Computation of structure relations for semiclassical and classical orthogonal polynomial sequences}

\vspace{0.25cm}

Compute the coefficients of the structure relation of index $l$, $l\ge1$, from the input data in Items 1.1--1.3 by using the initial conditions and the recurrence relations \eqref{SC-G0i}--\eqref{SC-RR_Mk} in Corollary~\ref{Cor_STR_SC}. Alternatively, these coefficients may be obtained from the general procedure described in Step 2.
 
\vspace{0.25cm}

\item {\bf Computation of differential equations for semiclassical and classical orthogonal polynomial sequences} 

\vspace{0.25cm} 

5.1. Compute the coefficients of the structure relations of indices $i$ and $j$ using Step 4 or, alternatively, the general procedure in Step 2.

\vspace{0.15cm}

5.2. Compute the coefficients of the differential equation of order $i$, corresponding to the index $j$, from the coefficients of the structure relations of indices $i$ and $j$ obtained in Step 5.1 by using relations \eqref{SC_Ki}--\eqref{SC_Kd0j1} in Proposition~\ref{Cor_DEij_SC}.

\vspace{0.25cm}

\item {\bf Computation of reduced coefficients of differential equations}

\vspace{0.25cm}

Compute the greatest common divisor (GCD) of the coefficients of each differential equation and, whenever permitted by the regularity conditions, divide all coefficients by this GCD to obtain the corresponding reduced coefficients.

\vspace{0.25cm}

\item {\bf Computation of orthogonal polynomials}

\vspace{0.25cm}

Compute the polynomials $P_n(x)$, and $P^{(1)}_n(x)$, for prescribed values of $n$, using the recurrence relations \eqref{ic_TTRR}--\eqref{TTRR} and \eqref{ic_ASSTTRR}--\eqref{ASSTTRR}, respectively.

\vspace{0.25cm}

\item {\bf Presentation of results}

\vspace{0.25cm}

Express the coefficients of the structure relations and differential equations in the canonical basis $\{x^i\}_{i\geq 0}$, with factorization carried out, whenever possible, with respect to $n$ and the parameters of the sequence.

\end{enumerate}

%%%%%%%%%%%%%%%%%%%%%%%%%%%%%%%%%%%%%%%%%%%%%%%%%%%%%%%%%%%%%%
\section{Results for Laguerre-Hahn of class~0 families analogous to Hermite} \label{Section5}
%%%%%%%%%%%%%%%%%%%%%%%%%%%%%%%%%%%%%%%%%%%%%%%%%%%%%%%%%%%%%%
In this section, we investigate the structure relations and differential equations for the two cases of class-zero Laguerre--Hahn orthogonal polynomial sequences analogous to the Hermite sequence. The classical Hermite sequence is recovered in the first case. All symbolic results are obtained by implementing the algorithm {\it HoDELH} in {\it Mathematica$^{\circledR}$}. 
The characteristic elements of these sequences, which constitute the input data for the algorithm, are given in 
\cite{Bouakkaz-Maroni-1991,Article-2-Soummi}. All sequence parameters are treated symbolically.

In Case 1, we compute the coefficients of the structure relation of order 5 and the coefficients of one differential equation of order 5 also. These results illustrate the symbolic capabilities of the algorithm.

For the classical Hermite sequence and for Case 2, we deduce in Propositions \ref{Prop_SR_CLHermite} and \ref{Prop_SR_Hermite2} the initial conditions and recurrence relations satisfied by the coefficients of the structure relations of these sequences from the general results established in  Corollary \ref{Cor_STR_SC} and Theorem \ref{th:0.1}. In addition, these propositions provide closed formulas, valid for every index $i$, for several coefficients of the structure relations. The remaining coefficients are computed symbolically using the algorithm {\it HoDELH} for the first values of $i$. More precisely, for the classical Hermite sequence, we present the structure relations of indices $1\leq i\leq 10$, whereas for Case 2, we present those of indices $1\leq i\leq 9$.
 
 With respect to differential equations, for the classical Hermite sequence, Proposition  \ref{Pro_DEi_CLHermite}  provides the coefficients corresponding to $j=1$. Using the algorithm {\it HoDELH}, we compute the coefficients of the differential equations of orders $i=2,3,4,5$, and $9$. For each order $i$, we determine all differential equations corresponding to the index $j$, where $1\le j<i$. 
 For Case 2, we compute one differential equation of order five and another of order ten.

%%%%%%%%%%%%%%%%%%%%%%%%%%%%%%%%%%%%%%
\subsection{Case~1 analogous to Hermite}\label{Section5_H_Case1}
%%%%%%%%%%%%%%%%%%%%%%%%%%%%%%%%%%%%%%

The regularity conditions for this sequence are
\[
\tau,\ \lambda,\ \rho \in\mathbb{C}, \quad \rho\neq 0, \quad \tau\not=-n, \quad n\geq 1.
\]
The canonical form $u_0$ of this sequence is related to the classical Hermite form ${\cal H}$ by \cite[Proposition 4.1]{Mohamed-Imed-2025} 
$$u_{0}={\cal H}^{(\tau)}\left(\lambda;\,{0\atop\rho}\,;1\right).$$
The input data for the algorithm {\it HoDELH} are as follows:
 
\noindent  - the recurrence coefficients, 
\begin{eqnarray}
&&\beta_{0}=\lambda\ ,\ \beta_{n+1}=0,\ n\ge 0\quad ; \quad 
\gamma_{1}=\rho{{\tau+1}\over{2}},\ \gamma_{n+1}={{n+\tau+1}\over{2}},\ n\ge 1. \notag
\end{eqnarray}
- the coefficients of the functional equation, 
\begin{eqnarray}
&&\Phi(x)=1,\quad \psi(x)=2{{2-\rho}\over{\rho}}x-{{4\lambda}\over{\rho}} ,\notag\\ 
&&B(x)=2{{\rho-1}\over{\rho}}x^{2}+2\lambda{{2-\rho}\over{\rho}}x+1-\rho(\tau+1)-{{2\lambda^{2}}\over{\rho}}.\notag
\end{eqnarray}
- the coefficients of the Laguerre-Hahn structure relation,
\begin{eqnarray}
&&C_{0}(x)=2{{\rho-2}\over{\rho}}x+{{4\lambda}\over{\rho}},\quad D_{0}(x)=-{{2}\over{\rho}}, \quad
C_{n+1}(x)=-2x,\quad D_{n+1}(x)=-2,\ n\ge 0.
\notag
\end{eqnarray}

In this case, some coefficients appearing in the structure relations and the differential equations satisfy initial conditions, which can be expressed compactly using the notation
$$
\epsilon_n=1-(1-\rho)\delta_{n,0},\quad n\geq 0.
$$
However, the reduced coefficients of some of the differential equations, obtained after eliminating common factors, are free of initial conditions.

%%%%%%%%%%%%%%%%%%%%%%%%%%%%%%%%%%%%%%%%%%%%%%
\subsubsection{Structure relations}
%%%%%%%%%%%%%%%%%%%%%%%%%%%%%%%%%%%%%%%%%%%%%%

Next, we list the coefficients of the structure relation of order five obtained by applying the algorithm {\it HoDELH} using symbolic computations. 

\vspace{0.5cm}

\noindent {\bf Structure relation of order $i=5$}

%%%%%%%%%%%%%%%%%%%%%%%%%%%%%%%%%%
\begin{eqnarray}
G_{0,5}(n;x) & = &\frac{4\epsilon_n (n+\tau +1)}{\rho ^4}\Big( 
8 (\rho -1)^2 \left(\rho ^2+1\right) x^5-8 \lambda  (\rho -1)
   \left(\rho ^3-3 \rho ^2+3 \rho -5\right) x^4\notag\\
   %%%%%
&&   -4x^3 \left(\rho ^5 (\tau +1)+\rho ^4 (3 \tau -8)+\rho ^3 \left(4 \lambda ^2-7 \tau +15\right)-3 \rho ^2 \left(4 \lambda ^2-\tau
   +7\right)\right.\notag\\
   &&\left.+\left(24 \lambda ^2+13\right) \rho -20 \lambda ^2+n \left(2 \rho ^4-5 \rho
   ^3+3 \rho ^2\right) \right)\notag\\
   %%%%%
   &&-2 \lambda 
   x^2 \left(\rho ^4 (7-6 \tau )+7 \rho
   ^3 (4 \tau -5)+\rho ^2 \left(8 \lambda ^2-18 \tau +89\right)\right.\notag\\
   &&\left.-2 \left(16 \lambda ^2+39\right) \rho +40
   \lambda ^2+n \left(-4 \rho ^4+20 \rho ^3-18 \rho ^2\right) \right)\notag\\
%%%%%
   &&-2 x \left(-\rho ^5 (\tau +1) (4 \tau+1)+\rho ^4 \left(4 \tau ^2+24 \tau +1\right)\right.\notag\\
   &&+\rho ^3 \left(-14 \lambda ^2 \tau +5 \lambda ^2-13 \tau +24\right)+2 \rho^2 \left(9 \lambda ^2 \tau -26 \lambda ^2-12\right)\notag\\
   &&+2 \lambda ^2 \left(4 \lambda ^2+39\right) \rho-20 \lambda ^4 \notag\\
   &&\left.+n \left(-10 \left(\lambda ^2+1\right) \rho ^3+18 \lambda ^2 \rho
   ^2-2 \rho ^5 (\tau +1)+3 \rho ^4 (\tau +4)\right)\right)\notag\\
   &&
-\lambda  \left(-\rho ^4 (2
   \tau +1) (4 \tau +11)+\rho ^3 (26 \tau -25)-2 \rho ^2 \left(6 \lambda ^2 \tau -5 \lambda ^2-24\right)\right.\notag\\
&&\left.
   +n \left(-12 \lambda ^2 \rho ^2+\rho ^4 (-6 \tau -13)+20 \rho ^3\right)+8 \lambda ^4-52 \lambda ^2 \rho\right)\Big) ,\notag\\
%\end{eqnarray}
%%%%%%%%%%%%%%%%%%%%%%%%%%%%%%%%%%
%\begin{eqnarray}
G_{1,5}(n;x) & = &\frac{4}{\rho^5} \left(
8 (\rho -1) x^6-8 \lambda  (5 \rho -6) x^5\right.\notag\\
&&+4 \left(-2 \left(\rho ^2-3 \rho +5\right) \rho ^3
   (n+\tau +1)+\rho ^2 (6 n+5 \tau -9)\right.\notag\\
   &&\left.+5 \left(4 \lambda ^2+3\right) \rho -30 \lambda ^2\right)
   x^4\notag\\
   %%%%%%%%%%%%%%%%%%%%%%%
   &&-8 \lambda  \left(\rho ^5 (-n-\tau -1)+3 \rho ^3 (2 \rho -5) (n+\tau +1)+\rho ^2 (12 n+10
   \tau -13)\right.\notag\\
   &&\left.+10 \left(\lambda ^2+3\right) \rho -20 \lambda ^2\right) x^3\notag\\
      %%%%%%%%%%%%%%%%%%%%%%%
   &&+2 \left(2 \rho ^4 (n+\tau +1) \left(\rho ^2 (\tau +1)+(n-10) \rho+6 \lambda ^2 -n+\tau +30\right)\right.\notag\\
   &&\left.
   -3 \rho ^3 \left(20 \lambda ^2 (n+\tau +1)+14 n+12 \tau -11\right)+3 \rho ^2 \left(4 \lambda ^2 (6 n+5 \tau -4)-15\right)  \right.\notag\\
   &&\left.
   +20 \left(\lambda ^2+9\right) \lambda ^2 \rho  -60 \lambda ^4  \right) x^2\notag\\
      %%%%%%%%%%%%%%%%%%%%%%%
   &&-2 \lambda  \left(\rho ^4 (n+\tau +1) ((2 n-11) \rho -2 (2 n-2 \tau -41))\right.\notag\\
   &&
   +\rho ^3 \left(-20 \lambda ^2 (n+\tau +1)-3 (28 n+24 \tau +3)\right)\notag\\
   && \left. +2 \rho ^2 \left(2 \lambda ^2 (12 n+10 \tau -3)-45\right)
    -24 \lambda ^4+4 \left(\lambda ^2+30\right) \lambda ^2 \rho\right) x\notag\\
   %%%%%%%%%%%%%%%%%%%%%%%%%%%%%%%%%%%%
   &&-2 \rho ^5 (n+\tau +1) \left(\rho  (\tau +1) (n+2 \tau )-n+11\right)\notag\\
   &&
   +\rho ^4 \left(-4 \lambda ^2 (n-\tau -11) (n+\tau +1)+22 n+19 \tau +7\right)\notag\\
   &&-3 \rho ^3
   \left(4 \lambda ^2 (7 n+6 \tau +2)-5\right)+2 \lambda ^2 \rho ^2
   \left(2 \lambda ^2 (6 n+5 \tau +1)-45\right) \notag\\
   &&\left.+60 \lambda ^4 \rho  -8 \lambda ^6
\right),\notag\\
%%%%%%%%%%%%%%%%%%%%%%%%%%%%%%%%%%
H_5(n;x) & = & \frac{4 (n+\tau +1)}{\rho ^4}\left(4 \left(2 \rho ^3-2 \rho ^2+2 \rho -1\right) x^4-8 \lambda 
   (\rho -1) \left(\rho ^2-\rho +2\right) x^3\right.\notag\\
   &&
   -4 x^2 \left(\rho ^4 (\tau +1) +\rho ^3 (n+2 \tau -8)+\rho ^2 \left(2 \lambda ^2-n-\tau
   +10\right)+\right.\notag\\
   &&\left. -6 \left(\lambda ^2+1\right) \rho +6 \lambda ^2\right)\notag\\
   &&
   -2 \lambda  x \left(\rho ^3 (-2 n-4 \tau +7)+4 \rho ^2 (n+\tau
   -6)+ 4 \left(\lambda ^2+6\right) \rho -8 \lambda ^2\right)\notag\\
   &&
+2 \rho ^4 (\tau +1) (n+2 \tau )-2 \rho ^3 (n+2 \tau -9)+\rho ^2 \left(4 \lambda ^2 (n+\tau -2)-15\right) \notag\\
   &&\left. +24 \lambda ^2 \rho  -4 \lambda ^4
\right),\notag\\
%%%%%%%%%%%%%%%%%%%%%%%%%%%%%%%%%%
M_{4,5}(n;x) & = & \frac{2}{\rho}\left((\rho -1) x+\lambda \right) ,\notag\\
M_{3,5}(n;x) & = & -\frac{2}{\rho ^2}\left(2 (\rho +1) x^2-2 \lambda 
   (\rho -2) x+\rho ^2 (-n-2 \tau -6)+5 \rho -2 \lambda ^2\right),\notag\\
   %%%%%%%%%%%%%%%%%%%%%%
M_{2,5}(n;x) & = & \frac{4}{\rho ^3}\left(2 (\rho -1) x^3-2 \lambda  (2 \rho
   -3) x^2+x \left((n-8) \rho ^2 +\left(2 \lambda ^2+9\right) \rho -6 \lambda ^2\right) \right.\notag\\
   &&\left.+\lambda  \left((4-n) \rho ^2-9 \rho+2 \lambda ^2\right) \right),\notag\\
   %%%%%%%%%%%%%%%%%%%%%%
M_{1,5}(n;x) & = &-\frac{4}{\rho ^4}\left( 4 (\rho -1) x^4-4 \lambda  (3 \rho -4) x^3\right.\notag\\
&&
   +2 x^2 \left(-4 \rho ^3 (n+\tau +1)+\rho ^2 (3 n+2 \tau
   -9)+6 \left(\lambda ^2+2\right) \rho -12 \lambda ^2 \right)\notag\\
   &&-2 \lambda  x \left(-4 \rho ^3 (n+\tau +1)+\rho ^2 (6 n+4 \tau
   -11) +2 \left(\lambda ^2+12\right) \rho -8 \lambda ^2\right)\notag\\
   &&
 +4 \rho ^4 (\tau +1) (n+\tau +1)+\rho ^3 (-7 n-4 \tau +8)\notag\\
 &&\left.+\rho ^2 \left(4 \lambda ^2 \tau -4 \lambda ^2+6 \lambda
   ^2 n-15\right)  -4 \lambda ^4+24 \lambda ^2 \rho \right),\notag\\
   %%%%%%%%%%%%%%%%%%%%%%%%%
M_{0,5}(n;x) & = &\frac{8}{\rho ^5}\left( 4 (\rho -1) x^5-4 \lambda  (4 \rho -5) x^4\right.\notag\\
&&
   +2 x^3 \left(4 \rho ^4 (n+\tau +1)-8 \rho ^3 (n+\tau
   +1)+\rho ^2 (5 n+4 \tau -9) \right.\notag\\
   &&\left.+2 \left(6 \lambda ^2+7\right) \rho -20 \lambda ^2 \right)\notag\\
   &&
   -2 \lambda  x^2
   \left(4 \rho ^4 (n+\tau +1)-16 \rho ^3
   (n+\tau +1)+3 \rho ^2 (5 n+4 \tau -6) \right.\notag\\
   &&\left.+2 \left(4 \lambda ^2+21\right) \rho -20 \lambda ^2
   \right)\notag\\
   &&
   +x \left(-4 \rho ^5 (\tau +1) (n+\tau +1)+4 \rho
   ^4 (\tau +9) (n+\tau +1) \right.\notag\\
   &&+\rho ^3 \left(-16 \lambda ^2 (n+\tau +1)-27 n-22 \tau +6\right)+3 \rho ^2 \left(2 \lambda ^2 (5 n+4 \tau -3)-11\right)\notag\\
   &&\left.-20 \lambda ^4+4 \lambda ^2 \left(\lambda ^2+21\right) \rho \right)  \notag\\
   &&
   +\lambda  \left(-2 \rho ^4 (2 \tau +9) (n+\tau +1) +\rho ^3 (27 n+22 \tau +2) \right.\notag\\
   &&\left.\left.+\rho ^2 \left(-8 \lambda ^2 \tau -10 \lambda ^2
   n+33\right)-28 \lambda ^2 \rho +4 \lambda ^4\right) \right)
.\notag
\end{eqnarray}

%%%%%%%%%%%%%%%%%%%%%%%%%%%%%%%%%%%%%%%%%%%%%%
\subsubsection{Differential equations}
%%%%%%%%%%%%%%%%%%%%%%%%%%%%%%%%%%%%%%%%%%%%%%

Next, we list the reduced coefficients of the differential equation of order $N=5$ corresponding to $(i_1,i_2,i_3,i_4)=(1,2,3,5)$
for $n\geq 0$, obtained by applying the {\it HoDELH} using symbolic computations. 

The greatest common factor between the coefficients ${\cal K}_i(n;x)$, for $i=0,2,3,4,5$, are
$$c(0;x)=\frac{4 (\tau +1)^2}{\rho },\quad c(n;x)=\frac{4 (n+\tau +1)^2}{\rho ^3}, \ n\geq 1.$$
\begin{eqnarray}
\widehat{{\cal K}}_{5}(n;x) & = & -\rho \Big( 8 (n+1) (\rho -1)^2 x^4 -4 \lambda  (\rho -1) x^3 (4 n( \rho -2)+3 \rho -8)\notag\\
   %%%%%%%%%%%%%%%%%%%%%
   &&+4 x^2 \left(-\rho ^3 (\tau +1)+\rho ^2 \left(\lambda ^2-\tau +2\right)+\rho  \left(-10 \lambda ^2+2 \tau -1\right)\right.\notag\\
   &&\left.+n \left(2 \rho ^2 \left(\lambda ^2+\tau +2\right)-2 \left(6 \lambda
   ^2+1\right) \rho +12 \lambda ^2-2 \rho ^3 (\tau +1)\right)+12
   \lambda ^2 \right)\notag\\
    %%%%%%%%%%%%%%%%%%%%%
   && +2 \lambda  x \left(\rho^3 (\tau +1)-5 \rho ^2+2 \rho  \left(3 \lambda ^2-4 \tau +2\right)\right.\notag\\
   &&\left.+n \left(8 \left(\lambda^2+1\right) \rho -16 \lambda ^2+4 \rho ^3 (\tau +1)-4 \rho ^2 (2 \tau +3)\right)-16 \lambda ^2 \right)\notag\\
   %%%%%%%%%%%%%%%%%%%%%%%%%%%%%%%%%%%%%%%%%
   &&+2 \rho ^3 (\tau +1) (2 \tau +3)+\rho ^2 \left( \lambda ^2(4 \tau +7 )-10 \tau -12\right)+2 \rho 
   \left(2 \lambda ^2 (2\tau -1)+3\right)\notag\\
   &&+n \left(2 \rho ^4 (\tau +1)^2-4 \rho
   ^3 (\tau +1)+2 \rho ^2 \left(4\lambda ^2 (\tau +1)+1\right)-8 \lambda ^2 \rho +8 \lambda ^4\right)+8 \lambda ^4\Big),\notag\\
%%%%%%%%%%%%%%%%%%%%%%%%%%%%%%%%%%%%%%%%%%%%%%%%%%%%%%%%%%
\widehat{{\cal K}}_{4}(n;x)  & = &-2 (x(\rho  -1)+\lambda ) \Big(8 (n+1) (\rho -1)^2 x^4
-4 \lambda  (\rho -1) x^3 (4 n (\rho -2)+3 \rho -8)\notag\\
%%%%%%%%%%%%%%%%%%%%%%%%%%%%%
   &&+4 x^2 \left(-\rho ^3 (\tau +1)+\rho ^2 \left(\lambda ^2-\tau +2\right)+\rho  \left(-10 \lambda ^2+2 \tau -1\right)+12
   \lambda ^2\right.\notag\\
   &&\left.+n \left(-2 \rho ^3 (\tau +1)+2 \rho ^2 \left(\lambda ^2+\tau +2\right)-2 \left(6 \lambda
   ^2+1\right) \rho +12 \lambda ^2\right) \right)\notag\\
   %%%%%%%%%%%%%%%%%%%%%%%%%%%%%
   &&+2 \lambda  x \left(\rho
   ^3 (\tau +1)-5 \rho ^2+2 \rho  \left(3 \lambda ^2-4 \tau +2\right)\right.\notag\\
   &&\left.+n \left(8 \left(\lambda
   ^2+1\right) \rho -16 \lambda ^2+4 \rho ^3 (\tau +1)-4 \rho ^2 (2 \tau +3)\right)-16 \lambda ^2\right)\notag\\
   %%%%%%%%%%%%%%%%%%%%%%%%%%%%%
   &&+2 \rho ^3 (\tau +1) (2 \tau +3)+\rho ^2 \left( \lambda ^2(4 \tau +7)-10 \tau -12\right)+2 \rho 
   \left(2 \lambda ^2 (2\tau -1)+3\right)\notag\\
   &&+n \left(2 \rho ^4 (\tau +1)^2-4 \rho
   ^3 (\tau +1)+2 \rho ^2 \left(4
   \lambda ^2 \tau +4 \lambda ^2+1\right)-8 \lambda ^2 \rho +8 \lambda ^4\right)+8 \lambda ^4
\Big),\notag\\
%%%%%%%%%%%%%%%%%%%%%%%%%%%%%%%%%%%%%%%%%%%%%%%%%%%%%%%%%
\widehat{{\cal K}}_{3}(n;x) & = & -2\Big(
-16 (n+1) (\rho -1)^2 \rho  x^6+8 \lambda  (\rho -1) \rho  (4  n( \rho-2)+3 \rho -8)
   x^5\notag\\
   %%%%%%%%%%%%%%%%%%
   &&+8 x^4\left(\rho ^4 (\tau +1)+\rho ^3 \left(-\lambda ^2+5 \tau -2\right)+5 \rho ^2 \left(2 \lambda ^2-2 \tau
   +1\right)\right.\notag\\
   &&-4 \rho  \left(3 \lambda ^2-\tau +2\right)+n^2 \left(2 \rho ^3-4 \rho ^2+2 \rho \right)+4\notag\\
   &&+n \left(2 \rho ^4
   (\tau +1)-2 \rho ^3 \left(\lambda ^2-\tau +1\right)+2 \rho ^2 \left(6
   \lambda ^2-4 \tau +1\right)\right.\notag\\
   &&\left.\left.-2 \rho  \left(6 \lambda ^2-2 \tau +3\right)+4\right)\right) \notag\\
   %%%%%%%%%%%%%%%%%
   &&-4 x^3\lambda 
   \left(\rho ^4 (\tau +1)+12 \rho ^3 \tau +2 \rho ^2 \left(3 \lambda ^2-26 \tau +4\right)+\rho  \left(-16 \lambda ^2+32 \tau
   -41\right)\right.\notag\\
   &&+n^2 \left(8 \rho ^3-24 \rho ^2+16 \rho \right)\notag\\
   &&\left.+n \left(8 \rho ^2
   \left(\lambda ^2-6 \tau -1\right)-4 \rho  \left(4 \lambda ^2-8 \tau +7\right)+4 \rho
   ^4 (\tau +1)+8 \rho ^3 \tau +32\right)+32\right) \notag\\
   %%%%%%%%%%%%%%%%%%%%%
   &&-2  x^2\left(2
   \rho ^4 (\tau +1) (6 \tau +7)+\rho ^3 \left(\lambda ^2 (-4 \tau +1)+8 \tau ^2-30 \tau -24\right)\right.\notag\\
   &&+2 \rho ^2
   \left(11 \lambda ^2 (4\tau +1)-8 \tau ^2+6 \tau +3\right)-96 \lambda ^2\notag\\
   &&+2
   \rho  \left(4 \lambda ^4+\lambda ^2(-48 \tau +29 )-4 \tau +2\right)\notag\\
   &&\left.+n^2
   \left(8 \rho ^4 (\tau +1)-8 \rho ^3 \left(\lambda ^2+\tau +2\right)+8 \left(6 \lambda ^2+1\right) \rho
   ^2-48 \lambda ^2 \rho \right)\right.\notag\\
   &&+n \left(2 \rho ^5 (\tau +1)^2+4 \rho ^4 (\tau +1) (4 \tau +3)-2 \rho ^3 \left(4 \lambda^2 (\tau +1)+8 \tau ^2+20 \tau +11\right)\right.\notag\\
   &&\left.\left.+8 \rho ^2 \left( \lambda ^2(12 \tau+7 )+\tau \right)-96 \lambda ^2+8 \rho  \left(\lambda ^4-12 \lambda ^2 \tau
   +3 \lambda ^2+1\right)\right)\right)\notag\\
   %%%%%%%%%%%%%%%%%%%%%%
   &&+2x \lambda  \left(2 \rho ^2 \left(3 \lambda ^2 (4\tau +3 )-16 \tau ^2+6 \tau +2\right)-2 \rho 
   \left( \lambda ^2 (32\tau +1)+8 \tau -4\right)-64 \lambda ^2\right.\notag\\
   &&+n^2 \left(16
   \left(\lambda ^2+1\right) \rho ^2-32 \lambda ^2 \rho +8 \rho ^4 (\tau +1)-8 \rho ^3 (2
   \tau +3)\right)\notag\\
   &&+n \left(\rho ^4
   (\tau +1) (4 \tau +5)+\rho ^3 (-23 \tau -17)+4 \rho ^2 \left(8 \lambda ^2( \tau +1)+4 \tau
   +3\right)\right.\notag\\
   &&\left.\left.-8 \rho  \left( \lambda ^2 (8\tau +3)-2\right)-64 \lambda ^2+4 \rho
   ^4 (\tau +1) (4 \tau +5)-4 \rho ^3 \left(8 \tau ^2+19 \tau +12\right)\right)\right) \notag\\
   %%%%%%%%%%%%%%%%%%%%%%
   &&+\rho ^4 (\tau +1)+12 \rho ^3 \tau +2 \rho ^2 \left(3 \lambda ^2-26 \tau +4\right)+\rho  \left(-16 \lambda ^2+32 \tau
   -41\right)\notag\\
   &&+n^2 \left(8 \rho ^3-24 \rho ^2+16 \rho \right)\notag\\
   &&+n \left(4 \rho
   ^4 (\tau +1)+8 \rho ^3 \tau +8 \rho ^2
   \left(\lambda ^2-6 \tau -1\right)-4 \rho  \left(4 \lambda ^2-8 \tau +7\right)+32\right)+32
\Big),\notag\\
%%%%%%%%%%%%%%%%%%%%%%%%%%%%%%%%%%%%%%%%%%%%%%%%%%%%%%%%%
%%%%%%%%%%%%%%%%%%%%%%%%%%%%%%%%%%%%%%%%%%%%%%%%%%%%%%%%%
%%%%%%%%%%%%%%%%%%%%%%%%%%%%%%%%%%%%%%%%%%%%%%%%%%%%%%%%%
%%%%%%%%%%%%%%%%%%%%%%%%%%%%%%%%%%%%%%%%%%%%%%%%%%%%%%%%%
\widehat{{\cal K}}_{2}(n;x)  & = & -4\Big(
-16 (n+1) (\rho -1)^3 x^7+8 \lambda  (\rho -1)^2 ( 2 n(2 \rho-5)+3 \rho -10) x^6\notag\\
    %%%%%%%%%%%%%%%%%%%%%
&&+8 (\rho
   -1)x^5 \left(\rho ^3 (\tau +1)+\rho ^2 \left(-\lambda ^2+5 \tau -5\right)+\rho  \left(13 \lambda ^2-10 \tau \right)\right.\notag\\
   &&-4
   \left(5 \lambda ^2-\tau -1\right)+n^2 \left(2 \rho ^2-4 \rho +2\right)\notag\\
   &&\left.+n \left(2 \rho ^3 (\tau +1)+\rho ^2
   \left(-2 \lambda ^2+2 \tau -5\right)+\rho  \left(16 \lambda ^2-8 \tau -3\right)-2
   \left(10 \lambda ^2-2 \tau -3\right)\right)\right)
   \notag\\
      %%%%%%%%%%%%%%%%%%%%%
   &&-4 \lambda   x^4\left(\rho ^4 (\tau +1)+\rho ^3 (9 \tau -13)+2 \rho ^2 \left(4 \lambda ^2-37 \tau +8\right)\right.\notag\\
   &&-2 \rho  \left(21 \lambda ^2-52 \tau-18\right)+40 \left(\lambda ^2-\tau -1\right)+n^2 \left(8 \rho ^3-36 \rho ^2+48 \rho
   -20\right)\notag\\
   &&+n \left(4 \rho ^4 (\tau+1)-22 \rho ^3+2 \rho ^2 \left(6 \lambda ^2-30 \tau -1\right)-16 \rho  \left(3
   \lambda ^2-6 \tau -5\right)\right.\notag\\
   &&\left.\left.+20 \left(2 \lambda ^2-2 \tau -3\right)\right)\right)
  \notag\\
   %%%%%%%%%%%%%%%%%%%%%
   &&+2x^3 \left(-4\rho ^4 (\tau +1) (3 \tau +2)+\rho ^3 \left( \lambda ^2(2 \tau -11 )+4 \tau ^2+94 \tau +74\right)\right.\notag\\
   &&+\rho ^2\left( \lambda ^2 (-116\tau +15 )+24 \tau ^2-90 \tau -148\right)\notag\\
   &&\left.-2 \rho 
   \left(10 \lambda ^4- \lambda ^2 (144\tau -60 )+8 \tau ^2-8 \tau -53\right)+8
   \left(5 \lambda ^4-20 \lambda ^2( \tau +1)-3\right)\right.\notag\\
   &&\left.+n^2 \left(8 \rho ^3\left(\lambda ^2+2 \tau +4\right)-8 \rho ^2 \left(9 \lambda ^2+\tau +5\right)+16
   \left(9 \lambda ^2+1\right) \rho -80 \lambda ^2-8 \rho ^4 (\tau +1)\right)\right.\notag\\
   &&+n \left(-2 \rho ^5 (\tau +1)^2-2 \rho ^4 (\tau +1) (7 \tau -3)-2\rho ^3 \left(8 \lambda ^2-16 \tau ^2-42 \tau -33\right)\right.\notag\\
   && -2 \rho ^2 \left( \lambda ^2(60\tau +16)+8 \tau ^2+52 \tau +83\right)\notag\\
   &&\left.\left.-8 \rho  \left(3 \lambda ^4- \lambda
   ^2 (36\tau +33)-4 \tau -15\right)+8 \left(5 \lambda ^4-10 \lambda ^2 (2\tau +3)
-3\right)\right)\right) \notag\\
      %%%%%%%%%%%%%%%%%%%%%
 &&     -2x^2\lambda \left(-\rho ^4 (\tau +1) (4 \tau +1)+\rho ^3 \left(16 \tau^2+88 \tau +59\right)\right.\notag\\
 &&\left.+
      \rho ^2 \left( \lambda ^2 (-28\tau +3 )+40 \tau ^2-170 \tau -235\right)\right.\notag\\
      &&+\rho 
   \left( \lambda ^2 (176\tau +88)-48 \tau ^2+48 \tau +249\right)+8
   \left(\lambda ^4-20 \lambda ^2( \tau +1)-9\right)\notag\\
   &&+n^2 \left(-8 \rho ^4 (\tau +1)+4 \rho ^3 (8 \tau +13)-4 \rho ^2
   \left(6 \lambda ^2+6 \tau +23\right)+48 \left(2 \lambda ^2+1\right) \rho -80 \lambda
   ^2\right)\notag\\
   &&+n \left(-14 \rho ^4 \tau  (\tau +1)+\rho ^3 \left(64 \tau ^2+152 \tau
   +105\right)\right.\notag\\
   &&\left.+\rho ^2 \left(-20 \lambda^2(2 \tau +1)-48 \tau ^2-256 \tau -345\right)+24 \rho  \left(8 \lambda ^2(
   \tau +1)+4 \tau +13\right)\right.\notag\\
   &&\left.\left.+8 \left(\lambda ^4-10 \lambda ^2 (2\tau+3)-9\right)\right) \right)\notag\\
   %%%%%%%%%%%%%%%%%%%%%
   &&+x
   \left(\rho ^3 \left(\lambda ^2 (24 \tau ^2+60 \tau +29)-16 \tau ^3-152 \tau
   ^2-222 \tau -48\right)\right.\notag\\
   &&+\rho ^2 \left(\lambda ^2(16 \tau ^2-228\tau -269)+88 \tau ^2+236 \tau +78\right)\notag\\
   &&-16 \lambda ^2 \left(10 \lambda ^2 (\tau +1)+9\right)+4 \rho  \left( \lambda ^4 (20\tau +12 )+\lambda ^2(24 \tau
   ^2+24\tau +96)-12 \tau -9\right)\notag\\
   &&+n^2 \left(4 \rho
   ^5 (\tau +1)^2-4 \rho ^4 (\tau +1) (\tau +5)+4 \rho^3 \left(8 \lambda ^2 \tau +10 \lambda ^2+4 \tau +7\right)\right.\notag\\
   &&\left.-4 \rho ^2 \left(12 \lambda
   ^2 \tau +32 \lambda ^2+3\right)+48 \lambda ^2 \left(\lambda ^2+2\right) \rho -80 \lambda ^4\right)\notag\\
   &&+n \left(2 \rho ^5 (\tau+1)^2 (4 \tau +1)-4 \rho ^4 (\tau +1) \left(2 \tau ^2+13 \tau +2\right)\right.\notag\\
   &&\left.+2 \rho ^3 \left(\lambda^2 (32\tau ^2+68  \tau +49 )+16 \tau ^2+10 \tau -1\right)\right.\notag\\
   &&-2 \rho ^2\left( \lambda ^2 (48\tau ^2+200\tau +229 )-14 \tau -10\right)
   +4 \rho  \left( \lambda^4 (24\tau +26 )+ \lambda ^2 (48\tau +132)-3\right)\notag\\
&&\left. \left.  -16\lambda ^2 \left(10 \lambda ^2 \tau +15 \lambda ^2+9\right)\right)  +2 \rho^4 (\tau +1) (2 \tau +3) (4 \tau +1) \right) \notag\\
%%%%%%%%%%%%%%%%%%%%%  
&&+\lambda  \left(\rho ^3 \left(16\tau ^3+80 \tau ^2+122 \tau +51\right)+\rho ^2 \left(\lambda ^2 (16\tau ^2+68  \tau +71 )-88 \tau ^2-188 \tau
   -87\right)\right.\notag\\
   &&+2 \rho  \left( \lambda ^2 (16\tau ^2-16  \tau -49)+24\tau +18\right)+16 \lambda ^2 \left( 2\lambda ^2 (\tau +1)+3\right)\notag\\
   %%%%
&&+n^2 \left(4 \rho ^4 (\tau +1) (\tau +2)-4 \rho ^3 (4 \tau +5)+4 \rho ^2 \left(4 \lambda ^2 \tau +6 \lambda ^2+3\right)
-32 \lambda ^2 \rho+16 \lambda ^4 \right)\notag\\
%%%
 &&+n \left(\rho ^4 (\tau +1) \left(8 \tau ^2+28
   \tau +19\right)+\rho ^3 \left(-32 \tau ^2-36 \tau -23\right)\right.\notag\\
   &&+4\rho ^2 \left( \lambda ^2(8 \tau ^2+24 \tau +25)-7 \tau-2\right)\notag\\
 &&\left.\left.-4 \rho  \left(\lambda ^2(16 \tau +36 )-3\right)+16 \lambda ^2
   \left(2 \lambda ^2 \tau +3 \lambda ^2+3\right)\right)\right)
   \Big),\notag\\
%%%%%%%%%%%%%%%%%%%%%%%%%%%%%%%%%%%%%%%%%%%%%%%%%%%%%%%%%
%%%%%%%%%%%%%%%%%%%%%%%%%%%%%%%%%%%%%%%%%%%%%%%%%%%%%%%%%
%%%%%%%%%%%%%%%%%%%%%%%%%%%%%%%%%%%%%%%%%%%%%%%%%%%%%%%%%
%%%%%%%%%%%%%%%%%%%%%%%%%%%%%%%%%%%%%%%%%%%%%%%%%%%%%%%%%
\widehat{{\cal K}}_{1}(n;x) & = &-4\Big(
-16 (n+1) (\rho -1)^3 x^6+16 \lambda  (\rho -1)^2 ( n(3 \rho -7)+2 \rho -7) x^5\notag\\
%%%%%%%%%%%%%%%%%%%%%%%%%%%%%%%
&&+8 (\rho -1) x^4\left(\rho ^3 (\tau +1)+\rho ^2 \left(-2 \lambda ^2-3 \tau -14\right)+\rho  \left(24 \lambda ^2+10 \tau
   +21\right)\right.\notag\\
   &&-4 \left(9 \lambda ^2+2 \tau +2\right)+n^3 \left(\rho ^2-\rho \right)+n^2\left(-\rho ^2+5 \rho -4\right)\notag\\
   &&\left.+n \left(4 \rho ^3 (\tau +1)-2 \rho ^2 \left(2 \lambda ^2+6 \tau
   +11\right)+2 \rho  \left(15 \lambda ^2+8 \tau +15\right)-4 \left(9 \lambda ^2+2 \tau
   +3\right)\right)\right) \notag\\
   %%%%%%%%%%%%%%%%%%%%%%%%%%%%%%%
   &&-4 \lambda x^3 \left(2 \rho ^4 (\tau +1)-5 \rho ^3 (4 \tau +11)+2 \rho ^2 \left(10 \lambda ^2+33 \tau +96\right)\right.\notag\\
   &&+\rho  \left(-96 \lambda ^2-112 \tau
   -203\right)+8 \left(11 \lambda ^2+8 \tau +8\right)\notag\\
   &&+n^3 \left(4 \rho ^3-12 \rho ^2+8 \rho \right)+n^2 \left(-3 \rho ^3+23 \rho ^2-52 \rho +32\right)\notag\\
   &&+n \left(10 \rho ^4 (\tau +1)-2 \rho ^3 (31\tau +53)+\rho ^2\left(28 \lambda ^2+140 \tau +283\right)\right.\notag\\
   &&\left.\left.+\rho  \left(-108 \lambda ^2-152 \tau -283\right)+8 \left(11 \lambda ^2+8 \tau +12\right)\right)\right) \notag\\
   %%%%%%%%%%%%%%%%%%%%%%%%%%%%%%
   &&+4  x^2\left(\rho ^4 (\tau+1) (2 \tau +5)+\rho ^3 \left(- \lambda ^2(12 \tau +25)-10 \tau ^2-44 \tau -53\right)\right.\notag\\
   &&+\rho ^2\left(\lambda ^2 (54 \tau +156 )+8 \tau ^2+81 \tau +115\right)+\rho  \left(-28
   \lambda ^4- \lambda ^2 120 \tau +239 )-44 \tau -91\right)\notag\\
   &&+4 \left(13 \lambda^4+24 \lambda ^2 (\tau +1)+6\right)\notag\\
   &&+n^3 \left(-2 \rho ^4 (\tau +1)+2 \rho ^3 \left(\lambda^2+\tau +2\right)-2 \left(6 \lambda ^2+1\right) \rho ^2+12 \lambda ^2 \rho \right)\notag\\
   &&+n^2 \left(\rho ^4 (\tau +1)+\rho ^3 \left(-\lambda ^2-11 \tau -14\right)+5 \rho ^2
   \left(2 \lambda ^2+2 \tau +5\right)\right.\notag\\
   &&\left.-12 \left(4 \lambda ^2+1\right) \rho +48 \lambda^2\right)\notag\\
   &&+n \left(-3 \rho ^5 (\tau +1)^2+\rho ^4 (\tau +1) (11 \tau +41)\right.\notag\\
   &&+\rho ^3 \left(- \lambda ^2 (30\tau +45 )-16 \tau ^2-132 \tau -143\right)\notag\\
  && +\rho ^2 \left( \lambda ^2 (112\tau +225)+8 \tau ^2+130 \tau +199\right)\notag\\
   &&\left.\left.+\rho  \left(-32 \lambda ^4- \lambda ^2 (168\tau +323
 )-44 \tau -118\right)+4 \left(13 \lambda ^4+\lambda ^2(24  \tau +36 )+6\right)\right)\right)\notag\\
   %%%%%%%%%%%%%%%%%%%%%%%%%%%%%%%%%%%%%
   &&-2 \lambda x  \left(2 \rho ^4 (\tau +1) (2 \tau +5)-3 \rho ^3 \left(8 \tau
   ^2+36 \tau +43\right)\right.\notag\\
   &&+\rho ^2 \left(\lambda ^2 (28 \tau +68 )+32 \tau ^2+252 \tau +339\right)-2 \rho 
   \left( \lambda ^2 (48\tau +101)+88 \tau +158\right)\notag\\
   &&+8 \left(3 \lambda ^4+16
   \lambda ^2 (\tau +1)+12\right)\notag\\
   &&+n^3 \left(-4 \rho ^4 (\tau +1)+4 \rho ^3 (2 \tau +3)-8 \left(\lambda ^2+1\right) \rho
   ^2+16 \lambda ^2 \rho \right)\notag\\
   &&+n^2 \left(\rho ^4 (\tau +1)-5 \rho ^3 (4 \tau +5)-2
   \rho ^2 \left(\lambda ^2-20 \tau -36\right)-24 \left(\lambda ^2+2\right) \rho +64
   \lambda ^2\right)\notag\\
   &&+n \left(2 \rho ^4 (\tau +1) (7 \tau +22)+\rho ^3 \left(-40 \tau
   ^2-271 \tau -272\right)\right.\notag\\
   &&+4 \rho ^2 \left(\lambda ^2(14 \tau +23 )+8 \tau ^2+101 \tau +133\right)-2 \rho  \left( \lambda
   ^2 (72\tau +133)+88 \tau +200\right)\notag\\
   &&\left.\left.+8 \left(3 \lambda ^4+\lambda ^2 (16 \tau
   +24)+12\right)\right)\right) \notag\\
   %%%%%%%%%%%%%%%%%%%%%%%%%%%%%%%%%%%%
   &&-8 \rho ^5 (\tau +1)^2+4 \rho ^4 (\tau +1) (6 \tau
   +19)-2 \lambda ^2 \rho ^3 \left(4 \tau ^2+20 \tau +23\right)\notag\\
   &&+2 \lambda ^2 \rho ^2 \left(16
   \tau ^2+90 \tau +109\right)-4 \lambda ^2 \rho  \left( \lambda ^2(4 \tau +7 )+44 \tau +67\right)\notag\\
   &&+32 \lambda ^2 \left(2 \lambda ^2 (\tau +1)+3\right)\notag\\
   &&+n^3
   \left(2 \rho ^5 (\tau +1)^2-4 \rho ^4 (\tau +1) +2 \rho ^3 \left(4 \lambda ^2 \tau +4 \lambda ^2+1\right)-8
   \lambda ^2 \rho ^2+8 \lambda ^4 \rho\right)\notag\\
   &&+n^2 \left(6 \rho ^4 (\tau +1) (2 \tau +5)+\rho ^3 \left(4 \lambda ^2 \tau +11 \lambda ^2-42 \tau -60\right)\right.\notag\\
   &&\left.+2 \rho ^2
   \left( \lambda ^2(20 \tau +22 )+15\right)+8 \left(\lambda ^2-6\right) \lambda
   ^2 \rho+32\lambda ^4 \right)\notag\\
   &&+n \left(\rho ^3 \left( -\lambda ^2 (16\tau ^2+86  \tau +63 )-84 \tau -128\right)+4 \rho ^2 \left(
   \lambda ^2 (8\tau ^2+72  \tau +79 )+15\right)\right.\notag\\
   &&\left.-4 \lambda ^2 \rho 
   \left( \lambda ^2 (8\tau +9)+44 \tau +82\right)+32 \lambda ^2 \left( \lambda
   ^2 (2\tau +3 )+3\right)\right)
\Big) ,\notag\\
%%%%%%%%%%%%%%%%%%%%%%%%%%%%%%%%%%%%%%%%%%%%%%%%%%%%%%%%%
%%%%%%%%%%%%%%%%%%%%%%%%%%%%%%%%%%%%%%%%%%%%%%%%%%%%%%%%%
%%%%%%%%%%%%%%%%%%%%%%%%%%%%%%%%%%%%%%%%%%%%%%%%%%%%%%%%%
%%%%%%%%%%%%%%%%%%%%%%%%%%%%%%%%%%%%%%%%%%%%%%%%%%%%%%%%%
\widehat{{\cal K}}_{0}(n;x)  & = &-8(n+1)\Big(
8 (n+1)^2 (\rho -1)^3 x^5\notag\\
&&-4x^4 \lambda  (\rho -1)^2 \left(n^2 (4 \rho -10)+n (5 \rho -20)+2 \rho -12\right) \notag\\
%%%%%%%%%%%%%%%%%%%
&&+4 x^3(\rho -1) \left(\rho ^3 (-\tau -1)+\rho ^2 (3 \tau +14)+\rho  \left(-10 \lambda ^2-10 \tau -21\right)\right.\notag\\
&&+8 \left(3 \lambda ^2+\tau +1\right)+n^2 \left(-2 \rho ^3 (\tau +1)+2 \rho ^2 \left(\lambda ^2+\tau +4\right)-2 \left(8 \lambda ^2+3\right) \rho +20
   \lambda ^2\right)\notag\\
   &&\left.+n \left(-\rho ^3 (\tau +1)+\rho ^2 \left(\lambda ^2-5 \tau +14\right)+\rho  \left(-23 \lambda ^2+6 \tau -17\right)+4 \left(10 \lambda^2+1\right)\right)\right) \notag\\
   %%%%%%%%%%%%%%%%%%%
   &&+2 \lambda x^2 \left(\rho ^3
   (-12 \tau -25)+2 \rho ^2 (20 \tau +61)+\rho  \left(-28 \lambda ^2-76 \tau -145\right)\right.\notag\\
   &&+8 \left(5 \lambda ^2+6 \tau +6\right)\notag\\
   &&+n^2
   \left(4 \rho ^4 (\tau +1)-16 \rho ^3 (\tau +2)+4 \rho ^2 \left(3 \lambda ^2+3 \tau +16\right)-12 \left(4 \lambda ^2+3\right)
   \rho +40 \lambda ^2\right)\notag\\
   &&+n \left(-\rho ^4 (\tau +1)+\rho ^3 (7 \tau -27)+6 \rho ^2
   \left(2 \lambda ^2-7 \tau +19\right)-2 \rho  \left(39 \lambda ^2-18 \tau +55\right)\right.\notag\\
   &&\left.\left.+8\left(10 \lambda ^2+3\right)\right)\right) \notag\\
      %%%%%%%%%%%%%%%%%%%
   &&+x\left(-2 \rho ^4 (\tau +1) (2 \tau +5)+2 \rho ^3 \left(10 \tau
   ^2+44 \tau +53\right)\right.\notag\\
   &&-2 \rho ^2 \left(\lambda ^2 (14 \tau +34)+8 \tau ^2+81 \tau +115\right)+2 \rho 
   \left( \lambda ^2(44 \tau +94)+44 \tau +91\right)\notag\\
   &&-24 \left(\lambda ^4+4
   \lambda ^2 (\tau +1)+2\right)\notag\\
   &&+n^2 \left(2 \rho ^5 (\tau+1)^2-2 \rho ^4 (\tau +1) (\tau +7)+2 \rho ^3 \left(8
   \lambda ^2 \tau +12 \lambda ^2+6 \tau +11\right)\right.\notag\\
   &&\left.-2 \rho ^2 \left(12 \lambda ^2 \tau
   +44 \lambda ^2+5\right)+24 \lambda ^2 \left(\lambda ^2+3\right) \rho -40 \lambda ^4\right)\notag\\
   %%%%
    &&+n \left(-2 \rho ^5 (\tau +1)^2+2 \rho ^4
   (\tau +1) (7 \tau +12)+\rho ^3 \left(2 \lambda ^2 \tau +21
   \lambda ^2-12 \tau ^2-92 \tau -48\right)\right.\notag\\
   &&+\rho ^2 \left(36 \lambda ^2 \tau -133 \lambda
   ^2+58 \tau +32\right)-16 \lambda ^2 \left(5 \lambda ^2+3\right)\notag\\
   &&\left.\left.+2 \rho  \left(18
   \lambda ^4-36 \lambda ^2 \tau +94 \lambda ^2-3\right)\right)\right) \notag\\
   %%%%%%%%%%%%%%%%%%%%%%%%%%%%
   &&+\lambda  \left(\rho ^3 \left(-4 \tau ^2-20 \tau -23\right)+\rho ^2
   \left(16 \tau ^2+90 \tau +109\right)-2 \rho  \left( \lambda ^2 (4\tau +7 )+44 \tau +67\right)\right.\notag\\
   &&\left.+16 \left(2 \lambda ^2(
   \tau +1)+3\right)\right.\notag\\
   &&+n^2 \left(2 \rho ^4 (\tau +1) (\tau +3)-4 \rho ^3 (3
   \tau +4)+2 \rho ^2 \left(4 \lambda ^2 \tau
   +8 \lambda ^2+5\right)-24 \lambda ^2 \rho+8 \lambda ^4 \right)\notag\\
   &&+n \left( -\rho ^4 (\tau +1) (2 \tau +3)+\rho ^3 \left(12
   \tau ^2+46 \tau +15\right)+\rho ^2 \left(\lambda ^2(4 \tau +29)-58 \tau-18\right)\right.\notag\\
   &&\left.\left. 
   +2 \rho  \left(12 \lambda ^2 \tau -26 \lambda ^2+3\right)
   +16 \lambda ^2\left(\lambda ^2+1\right) \right)\right)\Big) .\notag
\end{eqnarray}
%%%%%%%%%%%%%%%%%%%%%%%%%%%%%%%%%%%%%%%%%%%%%%%%%%%%%%%%%

%%%%%%%%%%%%%%%%%%%%%%%%%%%%%%%%%%%%%%%%%%%%%%%%%%%%%%%%%%
\subsubsection{Classical Hermite family}
%%%%%%%%%%%%%%%%%%%%%%%%%%%%%%%%%%%%%%%%%%%%%%%%%%%%%%%%%%

The classical Hermite monic orthogonal sequence is recovered as the particular case of the preceding family corresponding to 
 $\tau=0$, $\lambda =0$, and $\rho=1$. The input data for the algorithm {\it HoDELH} are as follows:
 
\noindent  - the recurrence coefficients, 
\begin{eqnarray}
&&\beta_{n}=0, \quad  \gamma_{n+1}={{n+1}\over{2}},\quad n\ge 0; \label{RC_CLHermite}
\end{eqnarray}
- the coefficients of the functional equation, 
\begin{eqnarray}
&&\Phi(x)=1,\quad\psi(x)=2x ,\quad B(x)=0;\label{FE_CLHermite}
\end{eqnarray}
-  the coefficients of the main Laguerre-Hahn structure relation, 
\begin{eqnarray}
&&C_{n}(x)=-2x,\quad D_{n}(x)=-2,\quad n\ge 0.\label{SE_CLHermite}
\end{eqnarray}

%%%%%%%%%%%%%%%%%%%%%%%%%%%%%%%%%%%%%%%%%%%%%%%%%%%%%%
\subsubsection{Structure relations}
%%%%%%%%%%%%%%%%%%%%%%%%%%%%%%%%%%%%%%%%%%%%%%%%%%%%%%

%%%%%%%%%%%%%%%%%%%%%%%%%%%%%%%%%%%%%%%%%%%%%%%%%%%%%%
\begin{proposition}\label{Prop_SR_CLHermite}
If \(\{P_n\}_{n \geq 0}\) is the classical Hermite monic orthogonal polynomial sequence with recurrence coefficients given by \eqref{RC_CLHermite}, then, for $n\geq 0$ and $i\geq 1$, the coefficients of the structure relation \eqref{ST_RR_i}-\eqref{RR_F} in Theorem \ref{th:0.1} satisfy 
\begin{eqnarray}
&& H_{1}(x;n)=n+1,  \label{H1-Hermite}\\
&& H_{i+1}(x;n) = 2x H_i (x;n)+  H_i' (x;n).\label{RR_CLHermite}\\
&& M_{0,1}(x;n)=0, \label{H1-M01}\\
&& M_{0,i+1}(x;n) =  M_{0,i}'(x;n)+2H_i(x;n),\label{H1-RR_M0}\\
&& M_{k,i+1}(x;n) = M_{k-1,i}(x;n) +  M_{k,i}'(x;n),\  1 \le k \le i. \label{RR_Mk_CLHermite}\\
&& M_{i,i}(x;n) =1,\ i\geq 1.  \label{RR_Mii_CLHermite}
\end{eqnarray}
Moreover, for $n\geq 0$,
\begin{eqnarray}
&& M_{i-1,i}(x;n) =0,\ i\geq 1. \label{RR_Mi1i_CLHermite}\\
&& M_{i-2,i}(x;n) =2(n+1),\ i\geq 2. \label{RR_Mi2i_CLHermite}\\
&& M_{i-3,i}(x;n) =4(n+1)x,\ i\geq 3. \label{RR_Mi3i_CLHermite}
\end{eqnarray}
Consequently, the structure relation of index $i\geq 1$ takes the form
\begin{eqnarray}
H_i(x;n) P_n(x) & = & P_{n+1}^{[i]}(x) + 2(n+1) P_{n+1}^{[i-2]}(x)+ 4(n+1)x P_{n+1}^{[i-3]}(x)\notag\\
&&+\sum_{k=0}^{i-4} M_{k,i}(x;n) P_{n+1}^{[k]}(x), \ n\geq 0. \label{SRi_CLHermite}
\end{eqnarray}
\end{proposition}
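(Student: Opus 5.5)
The plan is to specialize Corollary~\ref{Cor_STR_SC} to the Hermite data \eqref{RC_CLHermite}--\eqref{SE_CLHermite}, and then to derive the three closed formulas by short inductions along the diagonals $k=i-1$, $k=i-2$, $k=i-3$ of the array $M_{k,i}$. Since $B(x)=0$, the sequence is semiclassical. Hence Corollary~\ref{Cor_STR_SC} applies, with $G_{0,i}=G_{1,i}=0$, and the structure relation has the form \eqref{SRi_SC}.

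\textbf{Step 1: the recurrences.} Insert $\Phi=1$, $C_n=-2x$, $D_n=-2$ and $\gamma_{n+1}=(n+1)/2$ into Corollary~\ref{Cor_STR_SC}.
\begin{itemize}
\item From \eqref{SC-H1}, $H_1=-\gamma_{n+1}D_{n+1}=n+1$.
\item Since $C_{n+1}+C_0=-4x$, \eqref{SC-RR_H} becomes $H_{i+1}=2xH_i+H_i'$.
\item Since $C_{n+1}-C_0=0$, \eqref{SC-M01} gives $M_{0,1}=0$.
\item From \eqref{SC-RR_M0} with $D_n=-2$, $M_{0,i+1}=M_{0,i}'+2H_i$.
\item With $\Phi=1$, \eqref{SC-RR_Mk} becomes \eqref{RR_Mk_CLHermite}, and \eqref{RR_Mii} gives $M_{i,i}=1$.
\end{itemize}
This yields \eqref{H1-Hermite}--\eqref{RR_Mii_CLHermite} directly.

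\textbf{Step 2: the closed formulas.} Each is proved by induction on $i$. The base case uses the $M_{0,\cdot}$ recurrence, and the inductive step uses \eqref{RR_Mk_CLHermite} with $k\ge 1$.
\begin{itemize}
\item For \eqref{RR_Mi1i_CLHermite}, the base case is $M_{0,1}=0$. The step is $M_{i,i+1}=M_{i-1,i}+M_{i,i}'=0+0$, because $M_{i,i}=1$ is constant.
\item For \eqref{RR_Mi2i_CLHermite}, the base case is $M_{0,2}=M_{0,1}'+2H_1=2(n+1)$. The step is $M_{i-1,i+1}=M_{i-2,i}+M_{i-1,i}'=2(n+1)$, using the previous item.
\item For \eqref{RR_Mi3i_CLHermite}, first compute $H_2=2x(n+1)$. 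The base case is then $M_{0,3}=M_{0,2}'+2H_2=4(n+1)x$. The step is $M_{i-2,i+1}=M_{i-3,i}+M_{i-2,i}'=4(n+1)x+0$, since $M_{i-2,i}$ is constant in $x$. Here $i-2\ge1$ holds, so \eqref{RR_Mk_CLHermite} applies.
\end{itemize}

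\textbf{Step 3: the explicit relation.} Substitute these three values into \eqref{SRi_SC} with $\Phi^i=1$ to obtain \eqref{SRi_CLHermite}. For small $i$, terms with negative derivative index are simply absent.

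\textbf{Main obstacle.} The only delicate point is bookkeeping: each diagonal must start from the correct base case. The $k=0$ entries follow the separate recurrence \eqref{H1-RR_M0}, not \eqref{RR_Mk_CLHermite}, so the index ranges $i\ge2$ and $i\ge3$ must be respected. Everything else is direct substitution.
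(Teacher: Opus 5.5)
Your proposal is correct and follows the paper's proof essentially step for step: you specialize Corollary~\ref{Cor_STR_SC} to the Hermite data, then prove the three diagonal formulas by induction, taking the base cases from the $M_{0,\cdot}$ recurrence \eqref{H1-RR_M0} and the inductive steps from \eqref{RR_Mk_CLHermite}, using that the previous diagonal is constant in $x$. You also check explicitly that $k\ge 1$ in each inductive step and note that terms with negative derivative index are absent for small $i$; the paper leaves both points implicit.
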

%%%%%%%%%%%%%%%%%%%%%%%%%%%%%%%%%%%%%%%%%%%%%%%%%%%%%%
\begin{proof}
The formulas \eqref{H1-Hermite}--\eqref{RR_Mk_CLHermite} follow directly from the corresponding identities in Corollary \ref{Cor_STR_SC} by using the characteristic elements of the sequence given in \eqref{RC_CLHermite}, \eqref{FE_CLHermite}, and \eqref{SE_CLHermite}. The formula \eqref{RR_Mii_CLHermite} follows from \eqref{RR_Mii}  in Theorem \ref{th:0.1}, and \eqref{FE_CLHermite}.

For the additional coefficients, the proofs are analogous in all cases and will be established by induction.
Let us begin with \eqref{RR_Mi1i_CLHermite}.  
The formula \eqref{H1-M01} is precisely  \eqref{RR_Mi1i_CLHermite} for $i=1$.
Now assume that \eqref{RR_Mi1i_CLHermite} holds for some fixed $i \geq 1$. Setting $k=i$ in \eqref{RR_Mk_CLHermite}, we obtain
$M_{i,i+1}(x;n)=M_{i-1,i}(x;n)$,
since $M_{i,i}'(x;n)=0$ by \eqref{RR_Mii_CLHermite}. By the induction hypothesis, $M_{i-1,i}(x;n)=0$, therefore
$M_{i,i+1}(x;n)=0$, as required. 

We now prove \eqref{RR_Mi2i_CLHermite}.
Setting $i=1$ in \eqref{H1-RR_M0}, we obtain
$M_{0,2}(x;n)=M_{0,1}'(x;n)+2H_1(x;n)=2(n+1)$,
which is precisely \eqref{RR_Mi2i_CLHermite} for $i=2$. Now assume that
$M_{i-2,i}(x;n)=2(n+1)$
for some fixed $i\geq 2$. Taking $k=i-1$ in \eqref{RR_Mk_CLHermite}, we obtain
$M_{i-1,i+1}(x;n)=M_{i-2,i}(x;n)$,
since $M_{i-1,i}'(x;n)=0$ by \eqref{RR_Mi1i_CLHermite}. Hence,
$M_{i-1,i+1}(x;n)=2(n+1)$,
which completes the induction.

Finally, let us prove \eqref{RR_Mi3i_CLHermite}.
Setting $i=2$ in \eqref{H1-RR_M0}, we obtain \eqref{RR_Mi2i_CLHermite} for $i=3$. In fact, 
$M_{0,3}(x;n)=M_{0,2}'(x;n)+2H_2(x;n)=4(n+1)x$, since $M_{0,2}'(x;n)=0$, and $H_2(x;n)=2(n+1)x$  by \eqref{RR_CLHermite} for $i=2$. 
Now assume that $M_{i-3,i}(x;n)=4(n+1)x$ for some fixed $i\geq 3$. Taking $k=i-2$ in \eqref{RR_Mk_CLHermite}, we obtain $M_{i-2,i+1}(x;n)=M_{i-3,i}(x;n)$, since $M_{i-2,i}'(x;n)=0$ by \eqref{RR_Mi2i_CLHermite} which completes the induction.

The structure relation \eqref{SRi_CLHermite} follows directly from the coefficients just proved.
\end{proof}
%%%%%%%%%%%%%%%%%%%%%%%%%%%%%%%%%%%%%%%%%%%%%%%%%%%%%%

The following table lists the coefficients of the structure relations of indices $i=1,2,3$, and $4$ satisfied by the classical Hermite monic orthogonal polynomials. These coefficients are obtained by symbolic computation and were first reported in \cite{Article-1-NA}.
\begin{center}
\begin{tabular}{ l || l || l | l | l | l | l | l | l | l | | }
$i$ & $H_i(x;n)$ & $M_{0,i}(x;n)$ & $M_{1,i}(x;n)$ & $M_{2,i}(n)$  &$M_{3,i}$ & $M_{4,i}$ \\ \hline \hline
1 & $n+1$ & 0 & 1 & & &   \\ \hline
2 & $2(n+1)x$ & $2(n+1)$ & 0 & 1 & &   \\ \hline
3 & $2(n+1)(x^2+1)$ & $4(n+1)x$ & $2(n+1)$ & 0 & 1 &  \\ \hline
4 &  $4(n+1)x(2x^2+3)$ &  $8(n+1)(x^2+3)$ &  $4(n+1)x$ & $2(n+1)$ & 0 & 1   \\ \hline
\end{tabular}
\end{center}
We observe that the coefficients $M_{k,i}(x;n)$ are identical along each of the first four descending diagonals, counted from right to left. This property does not extend to the remaining diagonals.

Next, we present the coefficients of the structure relations and differential equations for $i=5,\ldots,10$, obtained by applying the algorithm \textit{HoDELH} with symbolic computations.
 
 \vspace{0.5cm}
 
\noindent $\bullet$ \noindent {\bf Structure relation for} $i=5$
 \begin{eqnarray}
&& H_5(x;n)=4 (n+1) \left(4 x^4+12 x^2+3\right),\notag\\
&& M_{0,5}(x;n)= 8 (n+1) x \left(2 x^2+5\right),\ M_{1,5}(x;n)=4 (n+1) \left(2 x^2+3\right).\notag
 \end{eqnarray}
 
\noindent $\bullet$ \noindent {\bf Structure relation for} $i= 6$
 \begin{eqnarray}
&& H_6(x;n)=8 (n+1) x \left(4 x^4+20 x^2+15\right),\notag\\
&& M_{0,6}(x;n)= 16 (n+1) \left(x^2+4\right) \left(2 x^2+1\right),\ M_{1,6}(x;n)=8 (n+1) x \left(2 x^2+7\right),\notag\\
&&M_{2,6}(x;n)=8 (n+1) \left(x^2+2\right).\notag
 \end{eqnarray}
 
\noindent $\bullet$ \noindent {\bf Structure relation for} $i= 7$
 \begin{eqnarray}
&& H_7(x;n)=8 (n+1) \left(8 x^6+60 x^4+90 x^2+15\right),\notag\\
&& M_{0,7}(x;n)= 16 (n+1) x \left(2 x^2+3\right) \left(2 x^2+11\right),\  M_{1,7}(x;n)=8 (n+1) \left(4 x^4+24 x^2+15\right),\notag\\
&&M_{2,7}(x;n)=8 (n+1) x \left(2 x^2+9\right),\ M_{3,7}(x;n)=4 \left(2 x^2+5\right).\notag
 \end{eqnarray}
 
\noindent $\bullet$ \noindent {\bf Structure relation for} $i= 8$
 \begin{eqnarray}
&& H_8(x;n)=16 (n+1) x \left(8 x^6+84 x^4+210 x^2+105\right),\notag\\
&& M_{0,8}(x;n)= 32 (n+1) \left(4 x^6+40 x^4+87 x^2+24\right),\notag\\
&&  M_{1,8}(x;n)=16 (n+1) x \left(4 x^4+36 x^2+57\right),\ M_{2,8}(x;n)=16 (n+1) \left(2 x^4+15 x^2+12\right),\notag\\
&& M_{3,8}(x;n)=8 x \left(2 x^2+11\right),\ M_{4,8}(x;n)=8 (n+1) \left(x^2+3\right).\notag
 \end{eqnarray}

\noindent $\bullet$ \noindent {\bf Structure relation for} $i= 9$
 \begin{eqnarray}
&& H_9(x;n)=16 (n+1) \left(16 x^8+224 x^6+840 x^4+840 x^2+105\right),\notag\\
&& M_{0,9}(x;n)= 32 (n+1) x \left(8 x^6+108 x^4+370 x^2+279\right),\notag\\
&&  M_{1,9}(x;n)=16 (n+1) \left(2 x^2+7\right) \left(4 x^4+36 x^2+15\right),\notag\\
&& M_{2,9}(x;n)=16 (n+1) x \left(4 x^4+44 x^2+87\right),\ M_{3,9}(x;n)=8 \left(4 x^4+36 x^2+35\right),\notag\\
&& M_{4,9}(x;n)=8 (n+1) x \left(2 x^2+13\right),\ M_{5,9}(x;n)=4 (n+1) \left(2 x^2+7\right),\notag
 \end{eqnarray}

\noindent $\bullet$ \noindent {\bf Structure relation for} $i=10$
 \begin{eqnarray}
&& H_{10}(x;n)=332 (n+1) x \left(16 x^8+288 x^6+1512 x^4+2520 x^2+945\right),\notag\\
&& M_{0,10}(x;n)=64 (n+1) \left(8 x^8+140 x^6+690 x^4+975 x^2+192\right) ,\notag\\
&&  M_{1,10}(x;n)=32 (n+1) x \left(8 x^6+132 x^4+570 x^2+561\right),\notag\\
&& M_{2,10}(x;n)=32 (n+1) \left(4 x^6+60 x^4+207 x^2+96\right),\notag\\
&& M_{3,10}(x;n)=16 (n+1) x \left(4 x^4+52 x^2+123\right),\notag\\
&& M_{4,10}(x;n)=16 (n+1) \left(2 x^4+21 x^2+24\right),\notag\\
&& M_{5,10}(x;n)=8 (n+1) x \left(2 x^2+15\right),\notag\\
&& M_{6,10}(x;n)=8 (n+1) \left(x^2+4\right).\notag
 \end{eqnarray}
 
%%%%%%%%%%%%%%%%%%%%%%%%%%%%%%%%%%%%%%%%%%%%%%%%%%%%%%
\subsubsection{Differential equations}
%%%%%%%%%%%%%%%%%%%%%%%%%%%%%%%%%%%%%%%%%%%%%%%%%%%%%%

%%%%%%%%%%%%%%%%%%%%%%%%%%%%%%%%%%%%%%%%%%%%%%
\begin{proposition}\label{Pro_DEi_CLHermite}
If \(\{P_n\}_{n \geq 0}\) is the classical Hermite monic orthogonal polynomial sequence with recurrence coefficients given by \eqref{RC_CLHermite}, then \(\{P_n\}_{n \geq 0}\) satisfies the following homogeneous linear differential equation of order 
$i\geq 2$:
\begin{eqnarray}
&& P_{n+1}^{[i]}(x) + 2(n+1) P_{n+1}^{[i-2]}(x)+ 4(n+1)x P_{n+1}^{[i-3]}(x)+\sum_{k=2}^{i-4} M_{k,i}(x;n) P_{n+1}^{[k]}(x)\notag\\
&&+\left(M_{1,i}(x;n)-\frac{1}{n+1} H_i(x;n) \right)P_{n+1}^{[1]}(x)+M_{0,i}(x;n) P_{n+1}(x)=0, \ n\geq 0, \notag
\end{eqnarray}
where the coefficients $H_i(x;n) $ and $M_{0,i}(x;n)$ are given in Proposition \ref{Prop_SR_CLHermite}.
\end{proposition}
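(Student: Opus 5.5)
This follows from Proposition \ref{Cor_DEij_SC} with the choice $j=1$, specialized to the Hermite data through Proposition \ref{Prop_SR_CLHermite}.

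For $j=1$, the structure relation \eqref{SRi_CLHermite} of index $1$ reads
\[
H_1(x;n)P_n(x)=P_{n+1}'(x)+M_{0,1}(x;n)P_{n+1}(x).
\]
By \eqref{H1-Hermite} and \eqref{H1-M01}, this is $(n+1)P_n=P_{n+1}'$. Since $H_1=n+1\neq 0$, we have
\[
P_n=\frac{1}{n+1}P_{n+1}'.
\]

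Now take the structure relation \eqref{SRi_CLHermite} of index $i\ge 2$, and replace $H_i(x;n)P_n(x)$ on its left-hand side by $\frac{1}{n+1}H_i(x;n)P_{n+1}^{[1]}(x)$. Moving this term to the right-hand side gives a homogeneous equation in $P_{n+1}$ and its derivatives up to order $i$. Equivalently, one can read off the coefficients \eqref{SC_Ki}--\eqref{SC_Kd0j1} of Proposition \ref{Cor_DEij_SC}:
\begin{itemize}
\item $\mathcal K_i=H_1\Phi^i=n+1$;
\item $\mathcal K_d=(n+1)M_{d,i}$ for $2\le d\le i-1$;
\item $\mathcal K_1=(n+1)M_{1,i}-H_i$;
\item $\mathcal K_0=(n+1)M_{0,i}-H_iM_{0,1}=(n+1)M_{0,i}$.
\end{itemize}
Dividing every coefficient by $n+1$ gives the normalized form in the statement.

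The remaining step is to identify the middle coefficients with the explicit values from Proposition \ref{Prop_SR_CLHermite}:
\begin{itemize}
\item $M_{i-1,i}=0$ by \eqref{RR_Mi1i_CLHermite};
\item $M_{i-2,i}=2(n+1)$ by \eqref{RR_Mi2i_CLHermite};
\item $M_{i-3,i}=4(n+1)x$ by \eqref{RR_Mi3i_CLHermite}.
\end{itemize}
The remaining orders $2\le k\le i-4$ stay in the sum, while the orders $1$ and $0$ are written separately. This matches the displayed equation.

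The main obstacle is the bookkeeping for small $i$, where the index ranges overlap.
\begin{itemize}
\item For $i=2$: the term $P_{n+1}^{[i-3]}$ does not exist, and the term $P_{n+1}^{[i-2]}$ is the order-$0$ term.
\item For $i=3$: the term $4(n+1)xP_{n+1}^{[0]}$ coincides with the $M_{0,3}$ term.
\item For $i=4$: the term $2(n+1)P_{n+1}^{[2]}$ and the term $4(n+1)xP_{n+1}^{[1]}=M_{1,4}P_{n+1}^{[1]}$ fall on the separately written orders.
\end{itemize}
So the statement must be read with the convention that each order is counted exactly once. I would fix this by stating that convention explicitly, namely that empty sums vanish and overlapping terms are not duplicated. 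I would then check $i=2,3,4$ directly against the table of coefficients. For example, for $i=2$ the equation is $P_{n+1}''-2xP_{n+1}'+2(n+1)P_{n+1}=0$, the classical Hermite equation, which confirms the signs.
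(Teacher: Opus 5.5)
Your proof is correct and follows the paper's own argument. You substitute the Appell relation $P_n=\frac{1}{n+1}P_{n+1}'$ (the structure relation of index $1$) into \eqref{SRi_CLHermite} and read off the coefficient $M_{1,i}-\frac{1}{n+1}H_i$ of $P_{n+1}'$; this is just the case $j=1$ of Proposition~\ref{Cor_DEij_SC} divided by $n+1$.

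Your caveat about $i\le 4$ is a fair point that the paper does not address: the displayed terms must not be double counted. Your case list is slightly off, though. For $i=3$, the order-$1$ term $2(n+1)P_{n+1}'$ also coincides with $M_{1,3}P_{n+1}'$, not only the order-$0$ term. For $i=4$, only the order-$1$ term overlaps, since order $2$ is not one of the separately written orders.
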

%%%%%%%%%%%%%%%%%%%%%%%%%%%%%%%%%%%%%%%%%%%%%
\begin{proof}
The first structure relation is \cite{Article-1-NA}
\begin{equation}
(n+1)P_{n}(x) =P'_{n+1}(x)\Leftrightarrow P_n(x)=\frac{1}{n+1}P'_{n+1}(x),\label{Appell_CLHermite}
\end{equation}
which expresses the Appell property of the Hermite sequence.
Substituting the second equality in \eqref{Appell_CLHermite} into the structure relation of index $i$, \eqref{SRi_CLHermite}, eliminates $P_n(x)$ and yields the required differential equation. The coefficient of $P'_{n+1}(x)$ is therefore
$${\cal K}_1(x;n)=M_{1,i}(x;n)-\frac{1}{n+1} H_i(x;n).$$
\end{proof}
%%%%%%%%%%%%%%%%%%%%%%%%%%%%%%%%%%%%%%%%%%%%%%
%%%%%%%%%%%%%%%%%%%%%%%%%%%%%%%%%%%%%%%%%%%%%%
\begin{remark}
This proposition provides the first and most straightforward approach for eliminating 
$P_n(x)$ from the structure relation of index $i$ thereby yielding an 
 $i$th-order differential equation. By applying the procedure described in Proposition \ref{Cor_DEij_SC}, it is also possible to derive additional $i-2$ differential equations of order $i$.
\end{remark}
%%%%%%%%%%%%%%%%%%%%%%%%%%%%%%%%%%%%%%%%%%%%%%

Next, we present the differential equations corresponding to the orders $i=2,3,4,5,$ and $9$, and to $1\leq j<i-1$, as computed symbolically by the algorithm \textit{HoDELH}. 

\vspace{0.25cm}
\noindent $\bullet$ \noindent {\bf Differential equation of order} $i=2$
\begin{eqnarray}
&&j=1,\quad   P''_{n+1}(x)-2xP'_{n+1}(x)+2(n+1)P_{n+1}(x)=0.\notag
\end{eqnarray}

We observe that this equation is the well-known second-order homogeneous linear differential equation characterizing the Hermite polynomial sequence \cite{Maroni-1994}.

\vspace{0.25cm}

\noindent $\bullet$ \noindent {\bf Differential equations of order} $i=3$
\begin{eqnarray}
&& j=1,\quad P^{(3)}_{n+1}(x)+2 (-2x^2+n)P'_{n+1}(x)+4 (n+1) xP_{n+1}(x)=0,\notag\\
&& j=2,\quad xP^{(3)}_{n+1}(x)-(2 x^2+1)P''_{n+1}(x)+2 (n+1) xP'_{n+1}(x)-2 (n+1)P_{n+1}(x)=0.\notag
\end{eqnarray}
%%%%%%%%%%%%%%%%%%%%%%%%%%%%%%%%%%%%

\noindent $\bullet$ \noindent {\bf Differential equations of order} $i=4$
\begin{eqnarray}
 j=1, \quad && P^{(4)}_{n+1}(x)+2 ( n+1) P''_{n+1}(x)+4 x(-2x^2+n-2) P'_{n+1}(x)\notag\\
&&+8 (n+1) (x^2+1)P_{n+1}(x)=0,\notag\\
 j=2, \quad &&  P^{(4)}_{n+1}(x)+2 \left(-2 x^2+n-2\right)P''_{n+1}(x)+4 (n+1) x P'_{n+1}(x)\notag\\
&&-4 (n+1)P_{n+1}(x)=0.\notag\\
j=3, \quad && ( 2 x^2+1)P^{(4)}_{n+1}(x)-2 x \left(2 x^2+3\right)P^{(3)}_{n+1}(x)+2 (n+1) \left(2 x^2+1\right)P''_{n+1}(x)\notag\\
&&-8 (n+1) x P'_{n+1}(x)+8 (n+1)P_{n+1}(x)=0.\notag
\end{eqnarray}
%%%%%%%%%%%%%%%%%%%%%%%%%%%%%%%%%%%%

\noindent $\bullet$ \noindent {\bf Differential equations of order} $i=5$
\begin{eqnarray}
 j=1, \ && P^{(5)}_{n+1}(x)+2 (n+1)P^{(3)}_{n+1}(x)+4 (n+1) xP''_{n+1}(x)\notag\\
&& +4\left(-4 x^4+2 (n-5) x^2+3 n\right)P'_{n+1}(x)+8 (n+1) x \left(2 x^2+5\right)P_{n+1}(x)=0,\notag\\ 
%%%%%%%%%%%%%%%%%%
 j=2, \ && xP^{(5)}_{n+1}(x)+2 (n+1) xP^{(3)}_{n+1}(x)+2 \left(-4 x^4+2 (n-5) x^2-3\right)P''_{n+1}(x)\notag\\
&& +4 (n+1) x \left(2 x^2+3\right)P'_{n+1}(x)-4 (n+1) \left(2 x^2+3\right)P_{n+1}(x)=0,\notag\\ 
%%%%%%%%%%%%%%%%%%
j=3, \ &&(2 x^2+1) P^{(5)}_{n+1}(x)+2\left(-4 x^4+2 (n-5) x^2+n-2 \right)P^{(3)}_{n+1}(x)\notag\\
&&+4 (n+1) x \left(2 x^2+1\right)P''_{n+1}(x) -16 (n+1) x^2P'_{n+1}(x)\notag\\
&&+16 (n+1) xP_{n+1}(x)=0,\notag\\
%%%%%%%%%%%%%%%%%%
j=4, \ &&x \left(2 x^2+3\right)P^{(5)}_{n+1}(x)-\left(4 x^4+12 x^2+3\right)P^{(4)}_{n+1}(x)\notag\\
&&+2 (n+1) x \left(2 x^2+3\right)P^{(3)}_{n+1}(x)\notag\\
&&-6 (n+1) \left(2 x^2+1\right)P''_{n+1}(x) +24 (n+1) xP'_{n+1}(x)\notag\\
&&-24 (n+1)P_{n+1}(x)=0.\notag
\end{eqnarray}
%%%%%%%%%%%%%%%%%%%%%%%%%%%%%%%%%%%%
 
 \noindent $\bullet$ \noindent {\bf Differential equations of order} $i=9$
 \begin{eqnarray}
 j=1, \  && P^{(9)}_{n+1}(x) +2 (n+1)P^{(7)}_{n+1}(x)+ 4 (n+1) xP^{(6)}_{n+1}(x)\notag\\
 &&+4 (n+1) \left(2 x^2+7\right)P^{(5)}_{n+1}(x) + 8 (n+1) x \left(2 x^2+13\right)P^{(4)}_{n+1}(x)\notag\\
&&+8 (n+1) \left(4 x^4+36 x^2+35\right)P^{(3)}_{n+1}(x)\notag\\
&& +16 (n+1) x \left(4 x^4+44 x^2+87\right)P''_{n+1}(x) \notag\\
&&+16\left(-16 x^8+8 (n-27) x^6+20 (5 n-37) x^4+6 (47 n-93) x^2+105 n\right)P'_{n+1}(x)\notag\\
&&+32 (n+1) x \left(8 x^6+108 x^4+370 x^2+279\right) P_{n+1}(x)=0,\notag\\
%%%%%%%%%%%%%%%%%%%%%%%%%%%%%%%%%%%%%%%%%%%%%%%%
j=2, \  && xP^{(9)}_{n+1}(x) +2 (n+1)P^{(7)}_{n+1}(x)+4 (n+1) x^2P^{(6)}_{n+1}(x)\notag\\
 &&+4 (n+1) x \left(2 x^2+7\right)P^{(5)}_{n+1}(x) +8 (n+1) x^2 \left(2 x^2+13\right)P^{(4)}_{n+1}(x)\notag\\
&&+8 (n+1) x \left(4 x^4+36 x^2+35\right)P^{(3)}_{n+1}(x)\notag\\
&& +8\left(-16 x^8+8 (n-27) x^6+8 (11 n-94) x^4+6 (29 n-111) x^2-105\right)P''_{n+1}(x) \notag\\
&&+16 (n+1) x \left(2 x^2+7\right) \left(4 x^4+36 x^2+15\right)P'_{n+1}(x)\notag\\
&&-16 (n+1) \left(2 x^2+7\right) \left(4 x^4+36 x^2+15\right) P_{n+1}(x)=0,\notag\\
%%%%%%%%%%%%%%%%%%%%%%%%%%%%%%%%%%%%%%%%%%%%%%%%
j=3, \  && \left(2 x^2+1\right)P^{(9)}_{n+1}(x) +2 (n+1) \left(2 x^2+1\right)P^{(7)}_{n+1}(x)+4 (n+1) x \left(2 x^2+1\right)P^{(6)}_{n+1}(x)\notag\\
&&4 (n+1) \left(2 x^2+1\right) \left(2 x^2+7\right)P^{(5)}_{n+1}(x)+8 (n+1) x \left(2 x^2+1\right) \left(2 x^2+13\right)P^{(4)}_{n+1}(x)\notag\\
&&+8 \left(8 (n-27) x^6+4 (19 n-191) x^4+2 (53 n-367) x^2+35 (n-2)-16 x^8\right)P^{(3)}_{n+1}(x)\notag\\
&&16 (n+1) x \left(2 x^2+1\right) \left(4 x^4+44 x^2+87\right)P''_{n+1}(x) \notag\\
&&-64 (n+1) x^2 \left(4 x^4+44 x^2+87\right)P'_{n+1}(x)\notag\\
&&+64 (n+1) x \left(4 x^4+44 x^2+87\right) P_{n+1}(x)=0,\notag\\
%%%%%%%%%%%%%%%%%%%%%%%%%%%%%%%%%%%%%%%%%%%%%%%%
 j=4, \  && x \left(2 x^2+3\right)P^{(9)}_{n+1}(x) +2 (n+1) x \left(2 x^2+3\right)P^{(7)}_{n+1}(x)\notag\\
 &&+4 (n+1) x^2 \left(2 x^2+3\right)P^{(6)}_{n+1}(x)+4 (n+1) x \left(2 x^2+3\right) \left(2 x^2+7\right)P^{(5)}_{n+1}(x) \notag\\
 &&+ 4 \left(8 (n-27) x^6+8 (8 n-97) x^4+6 (13 n-127) x^2-16 x-105\right)P^{(4)}_{n+1}(x)\notag\\
&&+8 (n+1) x \left(2 x^2+3\right) \left(4 x^4+36 x^2+35\right)P^{(3)}_{n+1}(x)\notag\\
&& -24 (n+1) \left(2 x^2+1\right) \left(4 x^4+36 x^2+35\right)P''_{n+1}(x) \notag\\
&&+96 (n+1) x \left(4 x^4+36 x^2+35\right)P'_{n+1}(x)\notag\\
&&-96 (n+1) \left(4 x^4+36 x^2+35\right)P_{n+1}(x)=0,\notag\\
%%%%%%%%%%%%%%%%%%%%%%%%%%%%%%%%%%%%%%%%%%%%%%%%
 j=5, \  && \left(4 x^4+12 x^2+3\right)P^{(9)}_{n+1}(x) +2 (n+1) \left(4 x^4+12 x^2+3\right)P^{(7)}_{n+1}(x)\notag\\
 &&+4 (n+1) x \left(4 x^4+12 x^2+3\right)P^{(6)}_{n+1}(x)\notag\\
 &&+4 \left(-16 x^8+8 (n-27) x^6+4 (13 n-197) x^4+30 (3 n-25) x^2+21 (n-4)\right)P^{(5)}_{n+1}(x) \notag\\
 &&+8 (n+1) x \left(2 x^2+13\right) \left(4 x^4+12 x^2+3\right)P^{(4)}_{n+1}(x)\notag\\
&& -64 (n+1) x^2 \left(2 x^2+3\right) \left(2 x^2+13\right)P^{(3)}_{n+1}(x)\notag\\
&& +192 (n+1) x \left(2 x^2+1\right) \left(2 x^2+13\right)P''_{n+1}(x) \notag\\
&&-768 (n+1) x^2 \left(2 x^2+13\right)P'_{n+1}(x)+768 (n+1) x \left(2 x^2+13\right)P_{n+1}(x)=0,\notag\\
%%%%%%%%%%%%%%%%%%%%%%%%%%%%%%%%%%%%%%%%%%%%%%%%
 j=6, \  &&x \left(4 x^4+20 x^2+15\right)P^{(9)}_{n+1}(x)+2 (n+1) x \left(4 x^4+20 x^2+15\right)P^{(7)}_{n+1}(x)\notag\\
 &&+2 \left(-16 x^8+8 (n-27) x^6+40 (n-20) x^4+30 (n-27) x^2-105\right)P^{(6)}_{n+1}(x)\notag\\
 &&+4 (n+1) x \left(2 x^2+7\right) \left(4 x^4+20 x^2+15\right)P^{(5)}_{n+1}(x) \notag\\
 && -20 (n+1) \left(2 x^2+7\right) \left(4 x^4+12 x^2+3\right)P^{(4)}_{n+1}(x)\notag\\
&& +160 (n+1) x \left(2 x^2+3\right) \left(2 x^2+7\right)P^{(3)}_{n+1}(x)\notag\\
&& -480 (n+1) \left(2 x^2+1\right) \left(2 x^2+7\right)P''_{n+1}(x) \notag\\
&&+1920 (n+1) x \left(2 x^2+7\right)P'_{n+1}(x)-1920 (n+1) \left(2 x^2+7\right)P_{n+1}(x)=0,\notag\\
%%%%%%%%%%%%%%%%%%%%%%%%%%%%%%%%%%%%%%%%%%%%%%%%
 j=7, \  && \left(8 x^6+60 x^4+90 x^2+15\right)P^{(9)}_{n+1}(x) \notag\\
 &&+2  \left(-16 x^8+8 (n-27) x^6+60 (n-13) x^4+30 (3 n-25) x^2+15 (n-6)\right)P^{(7)}_{n+1}(x)\notag\\
 &&+4 (n+1) x \left(8 x^6+60 x^4+90 x^2+15\right)P^{(6)}_{n+1}(x)\notag\\
 &&-48 (n+1) x^2 \left(4 x^4+20 x^2+15\right)P^{(5)}_{n+1}(x) \notag\\
 && +240 (n+1) x \left(4 x^4+12 x^2+3\right)P^{(4)}_{n+1}(x)-1920 (n+1) x^2 \left(2 x^2+3\right)P^{(3)}_{n+1}(x)\notag\\
&& +5760 (n+1) x \left(2 x^2+1\right)P''_{n+1}(x) -23040 (n+1) x^2P'_{n+1}(x)\notag\\
&&+23040 (n+1) xP_{n+1}(x)=0,\notag\\
%%%%%%%%%%%%%%%%%%%%%%%%%%%%%%%%%%%%%%%%%%%%%%%%
 j=8, \  && x \left(8 x^6+84 x^4+210 x^2+105\right)P^{(9)}_{n+1}(x) \notag\\
 &&-\left(16 x^8+224 x^6+840 x^4+840 x^2+105 \right)P^{(8)}_{n+1}(x)\notag\\
 &&+2 (n+1) x \left(8 x^6+84 x^4+210 x^2+105\right)P^{(7)}_{n+1}(x)\notag\\
 &&-14 (n+1) \left(8 x^6+60 x^4+90 x^2+15\right)P^{(6)}_{n+1}(x)\notag\\
 &&+168 (n+1) x \left(4 x^4+20 x^2+15\right)P^{(5)}_{n+1}(x) \notag\\
 && -840 (n+1) \left(4 x^4+12 x^2+3\right)P^{(4)}_{n+1}(x)+6720 (n+1) x \left(2 x^2+3\right)P^{(3)}_{n+1}(x)\notag\\
&& -20160 (n+1) \left(2 x^2+1\right)P''_{n+1}(x) +80640 (n+1) xP'_{n+1}(x)\notag\\
&&-80640 (n+1) P_{n+1}(x)=0.\notag
\end{eqnarray}

%%%%%%%%%%%%%%%%%%%%%%%%%%%%%%%%%%%%%%%%%%%%%%
%%%%%%%%%%%%%%%%%%%%%%%%%%%%%%%%%%%%%%%%%%%%%%
\subsection{Case~2 analogous to to Hermite}
%%%%%%%%%%%%%%%%%%%%%%%%%%%%%%%%%%%%%%%%%%%%%%
%%%%%%%%%%%%%%%%%%%%%%%%%%%%%%%%%%%%%%%%%%%%%%
\vspace{0.5cm}
The regularity conditions for this sequence are:
$
\lambda, \rho \in\mathbb{C}, \ \rho\neq 0.
$
The canonical form $u_0$ of this sequence is related to the classical Hermite form ${\cal H}$ by 
$$
u_{0}^{(1)}={\cal H}.
$$
 The input data for the algorithm {\it HoDELH} are as follows:
 
\noindent  - the recurrence coefficients, 

\begin{equation}\label{RC_LH0Hermite2}
\beta_{0}=\lambda,\quad \beta_{n+1}=0,\ n\geq 0;\qquad 
\gamma_{1}=\frac{\rho}{2},\quad \gamma_{n+1}=\frac{n}{2},\ n\geq 1.
\end{equation}
- the coefficients of the Stieltjes equation,
\begin{equation}\label{SE_LH0Hermite2}
 \Phi(x)=1,\quad B(x)=2x^2-2\lambda x +1-\rho,\quad  C(x)=2x,\quad D(x)=0.
 \end{equation}
- the coefficients of the Laguerre-Hahn structure relation, \cite{Article-2-Soummi}
\begin{equation}\label{SR_LH0Hermite2}
C_0(x)=C(x), \ D_0(x)=D(x)\ ;\quad C_{n+1}(x)=-2x,\quad  D_{n+1}(x)=-2,\quad n\geq 0.
\end{equation}

As in the preceding case, some coefficients appearing in the structure relations and the differential equations satisfy initial conditions, which can be expressed compactly using the notation
$$
\eta_n=n+\rho\delta_{n,0},\quad n\geq 0.
$$
However, the reduced coefficients of some of the differential equations, obtained after eliminating common factors, are free of initial conditions.

%%%%%%%%%%%%%%%%%%%%%%%%%%%%%%%%%%%%%%%%%%%%%%
\subsubsection{Structure relations}
%%%%%%%%%%%%%%%%%%%%%%%%%%%%%%%%%%%%%%%%%%%%%%

\vspace{0.25cm}

\begin{proposition}\label{Prop_SR_Hermite2}
If \(\{P_n\}_{n \geq 0}\) is the Laguerre-Hahn analogous to the Hermite Case 2 monic orthogonal polynomial sequence with recurrence coefficients given by \eqref{RC_LH0Hermite2}, then, for $n\geq 0$ and $i\geq 1$, the coefficients of the structure relation \eqref{ST_RR_i}-\eqref{RR_F} in Theorem \ref{th:0.1} satisfy
\begin{eqnarray}
&&G_{0,1}(x;n)=0,\label{G01_Hermite2}\\
&& G_{1,1}(x;n)=B_0(x)=2x^2-2\lambda x+1-\rho ,\label{G11_Hermite2}\\
&& G_{0,i+1}(x;n)=2xG_{0,i}(x;n)+\eta_n G_{1,i}(x,n)+B_0(x)H_i(x;n)+G'_{0,i}(x;n),\label{RR_G01_Hermite2}\\
&&G_{1,i+1}(x;0)=G'_{1,i}(x,0),\label{RRG10_Hermite2}\\
&& G_{1,i+1}(x;n)=-2G_{0,i}(x;n)+G'_{1,i}(x;n),\ n\geq 1,\label{RRG1_Hermite2}\\
&& H_{1}(x;n)=\eta_n, \label{H1_Hermite2}\\
&& H_{i+1}(x;n) = H'_i(x;n),\label{RR_Hi_CLHermite}\\
&& M_{0,1}(x;n)=2x, \label{M01_Hermite2}\\
%&& M_{0,2}(x;n)=2(\eta_n+1), \label{M02_Hermite2}\\
&&M_{0,i+1}(x;0) =M'_{0,i}(x;0), \ i\geq 2,\label{RR_M0i_Hermite2}\\
%%%%%%%%%%%%%%%%%%%%%%%%%%%%%%%%%%%
&& M_{0,i+1}(x;n)=M'_{0,i}(x;n)+2H_i(x;n),\ n\geq 1,\ i\geq 1, \label{RR_M0i1_Hermite2}\\
&&M_{k,i+1}(x;n) = M_{k-1,i}(x;n) +  M'_{k,i}(x;n),\  1 \le k \le i, \label{RR_Mk_Hermite2}\\
&& M_{i,i}(x;n) =1.  \label{RR_Mii_Hermite2}
\end{eqnarray}
Moreover,  for $n\geq 0$, 
\begin{eqnarray}
&&H_{i+1}(x;n)=0, \quad i\geq 1, \label{Hi_Hermite2}\\
&&M_{0,i+1}(x;n)=0,\quad i\geq 2, \label{M0i_Hermite2}\\
&& M_{i-1,i}(x;n)=2x,\quad i\geq 2, \label{Mi1i_Hermite2}\\
%&& M_{0,2}(x;n)=2(\eta_n+1),  \label{M02_Hermite2}\\
&&M_{i-2,i}(x;n)= 2(\eta_n+i-1) ,\quad i\geq 2,\label{Mi2i_Hermite2}\\
&&M_{i-j,i}(x;n)= 0, \quad j\geq 3 ,\quad i\geq j.\label{Miji_Hermite2}
\end{eqnarray}
\vspace{0.15cm}
Consequently, the structure relations take the form

\noindent - for  $i=1$,
\begin{eqnarray}
&&  \left(2x^2-2\lambda x +1-\rho\right) P_{n}^{(1)}(x) + \eta_n P_n(x)=
 P'_{n+1}(x)+2xP_{n+1}(x).\label{SR1_Hermite2}
\end{eqnarray}

\noindent - for  $i\geq 2$,
\begin{eqnarray}
&& G_{0,i}(x;n) P_{n-1}^{(1)}(x) + G_{1,i}(x;n) P_{n}^{(1)}(x) =
 P^{[i]}_{n+1}(x)+2xP^{[i-1]}_{n+1}(x)\label{SRi_Hermite2}\\
 &&+ 2(n+i-1)P^{[i-2]}_{n+1}(x).\notag
\end{eqnarray}
\end{proposition}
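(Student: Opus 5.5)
The plan is to specialize the general recurrences of Theorem~\ref{th:0.1} to the data \eqref{RC_LH0Hermite2}--\eqref{SR_LH0Hermite2}, and then obtain the closed forms by induction along the diagonals $k=i-j$ of the array $M_{k,i}$, as in the proof of Proposition~\ref{Prop_SR_CLHermite}.

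\textbf{Step 1: elementary coefficients.} Since $\Phi=1$, we have $C_{n+1}+C_0=-2x+2x=0$ and $-\tfrac12(C_{n+1}-C_0)=2x$. Moreover $-\gamma_{n+1}D_{n+1}=2\gamma_{n+1}$, which equals $\rho$ for $n=0$ and $n$ for $n\ge1$, i.e.\ $\eta_n$. Finally $D_0=0$ and $D_n=-2$ for $n\ge1$.

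\textbf{Step 2: the specialized recurrences.} Inserting these into \eqref{G01}--\eqref{RR_Mii} gives the listed relations directly:
\begin{itemize}
\item $G_{0,1}=0$, $G_{1,1}=B_0$, $H_1=\eta_n$, $M_{0,1}=2x$;
\item \eqref{RR_G0} becomes \eqref{RR_G01_Hermite2};
\item \eqref{RR_G1} becomes $G_{1,i+1}=D_nG_{0,i}+G_{1,i}'$, which splits into \eqref{RRG10_Hermite2} and \eqref{RRG1_Hermite2} according to $D_0=0$ or $D_n=-2$;
\item \eqref{RR_H} becomes $H_{i+1}=H_i'$, because $D_0=0$;
\item \eqref{RR_M0} becomes $M_{0,i+1}=M_{0,i}'-D_nH_i$, again splitting according to $n=0$ or $n\ge1$;
\item \eqref{RR_Mk} and \eqref{RR_Mii} hold with $\Phi=1$.
\end{itemize}

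\textbf{Step 3: closed forms.}
\begin{itemize}
\item Since $H_1=\eta_n$ is constant in $x$, $H_2=0$, and hence $H_{i+1}=0$ for all $i\ge1$.
\item $M_{0,2}=M_{0,1}'-D_nH_1$ is constant, and for $i\ge2$ the term $-D_nH_i$ vanishes. So $M_{0,i+1}=M_{0,i}'$, giving $M_{0,3}=0$ and then $M_{0,i+1}=0$ for $i\ge2$.
\item For the first diagonal, \eqref{RR_Mk_Hermite2} with $k=i$ and $M_{i,i}'=0$ gives $M_{i,i+1}=M_{i-1,i}$. Hence $M_{i-1,i}=M_{0,1}=2x$.
\item For the second diagonal, $k=i-1$ gives $M_{i-1,i+1}=M_{i-2,i}+(2x)'=M_{i-2,i}+2$. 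Starting from $M_{0,2}$, this yields $M_{i-2,i}=M_{0,2}+2(i-2)$, which is $2(\eta_n+i-1)$ once $M_{0,2}=2(\eta_n+1)$.
\item For $j\ge3$, I would induct on $j$. The base is $M_{0,j}=0$. The step is $M_{i-j+1,i+1}=M_{i-j,i}+M_{i-j+1,i}'$, where $M_{i-j+1,i}$ lies on diagonal $j-1$ and is therefore a constant or zero, so its derivative vanishes. This gives $M_{i-j,i}=0$ for $j\ge3$.
\end{itemize}

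\textbf{Step 4: the explicit structure relations.} Substituting these values into \eqref{ST_RR_i}--\eqref{RR_F} gives \eqref{SR1_Hermite2} for $i=1$ (recall $G_{0,1}=0$). For $i\ge2$, $H_i=0$ removes the $P_n$ term, and the only surviving $M$'s are the diagonals $M_{i-1,i}=2x$ and $M_{i-2,i}$, which gives \eqref{SRi_Hermite2}.

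\textbf{Main obstacle: the case $n=0$ in $M_{0,2}$.} The recurrence gives $M_{0,2}(x;n)=-D_n\eta_n=2\eta_n=2n$ for $n\ge1$, but $M_{0,2}(x;0)=0$ because $D_0=0$. So I must reconcile this base value with the stated $2(\eta_n+1)$ and with the coefficient $2(n+i-1)$ in \eqref{SRi_Hermite2}. I expect this to force a shift: either the diagonal starts at $M_{0,2}=2\eta_n$ or $2n$ and the closed form reads $2(n+i-2)$, or the exact value follows from the paper's conventions for $\gamma_0D_{-1}$ and $\eta_n$.

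I would settle this first by direct substitution at $n=0,1$ before running the diagonal induction. That means checking $M_{0,2}$ against \eqref{RR_M0i_Hermite2}, \eqref{RR_M0i1_Hermite2} and \eqref{Mi2i_Hermite2}, and verifying \eqref{SR1_Hermite2} and \eqref{SRi_Hermite2} numerically from the recurrence \eqref{RC_LH0Hermite2} for small $n$ and $i$. I would then fix the constant accordingly before completing the induction.
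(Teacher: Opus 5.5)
Your route is the paper's own: specialize Theorem~\ref{th:0.1} using $\Phi=1$, $C_{n+1}+C_0=0$, $-\tfrac12(C_{n+1}-C_0)=2x$, $-\gamma_{n+1}D_{n+1}=\eta_n$, $D_0=0$ and $D_n=-2$ for $n\ge1$, then induct along the diagonals of $(M_{k,i})$. Steps 1--4 are sound.

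The gap is in your ``main obstacle'' paragraph. There you evaluate $M_{0,2}(x;n)=-D_n\eta_n$, which drops the term $M_{0,1}'=(2x)'=2$ that you had correctly kept in Step~3. The correct base values are
\[
M_{0,2}(x;0)=M_{0,1}'(x;0)=2,\qquad M_{0,2}(x;n)=M_{0,1}'(x;n)+2H_1(x;n)=2+2n,\quad n\ge1,
\]
so $M_{0,2}(x;n)=2(n+1)$ for every $n\ge0$. Your recursion $M_{i-1,i+1}=M_{i-2,i}+2$ then gives $M_{i-2,i}(x;n)=2(n+i-1)$ for all $n\ge0$. This is exactly the coefficient in \eqref{SRi_Hermite2}.

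Both of your tentative repairs are therefore wrong: starting the diagonal at $2\eta_n$ or $2n$, and the closed form $2(n+i-2)$. The small-case check you planned would have shown this. For $n=0$, $i=2$ one has $H_2=0$, $P_{-1}^{(1)}=0$ and $G_{1,2}(x;0)=B_0'=2(2x-\lambda)$. The relation then reads $2(2x-\lambda)=P_1''+2xP_1'+M_{0,2}(x;0)P_1=2x+M_{0,2}(x;0)(x-\lambda)$, which forces $M_{0,2}(x;0)=2$.

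Your suspicion about $n=0$ is still justified, but it affects only \eqref{Mi2i_Hermite2}, not the structure relation. With $\eta_0=\rho$, \eqref{Mi2i_Hermite2} would assert $M_{i-2,i}(x;0)=2(\rho+i-1)$, whereas the recurrences give $2(i-1)$. The paper's proof also computes $M_{0,2}(x;0)=2$. It then writes $M_{0,2}=2(\eta_n+1)$ by applying \eqref{RR_M0i1_Hermite2}, which holds only for $n\ge1$, and uses this for all $n$.

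So what can and should be proved is $M_{i-2,i}(x;n)=2(n+i-1)$ for all $n\ge0$. This agrees with \eqref{Mi2i_Hermite2} for $n\ge1$ and with \eqref{SRi_Hermite2} for all $n$. Once the base value is fixed this way, the rest of your argument goes through unchanged, including the $j\ge3$ diagonals, which only need $M_{i-2,i}$ to be constant in $x$.
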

%%%%%%%%%%%%%%%%%%%%%%%

\begin{proof}
The formulas \eqref{G01_Hermite2}--\eqref{RR_Mii_Hermite2} are obtained by substituting the characteristic elements \eqref{RC_LH0Hermite2}, \eqref{SE_LH0Hermite2}, and \eqref{SR_LH0Hermite2} of this sequence into the corresponding formulas \eqref{G01}--\eqref{RR_Mii} of Theorem~\ref{th:0.1}.

It remains to prove the remaining formulas.

Applying recursively the recurrence relation \eqref{RR_Hi_CLHermite}, and taking into account \eqref{H1_Hermite2}, we obtain
$
H_{i+1}(x;n)=H_1^{(i)}(x;n)=(\eta_n)^{(i)}=0,\ i\ge1,
$
which proves \eqref{Hi_Hermite2}.

Next, applying \eqref{RR_M0i_Hermite2} with $i=1$ and using \eqref{M01_Hermite2}, we obtain
$
M_{0,2}(x;0)=M'_{0,1}(x;0)=2.
$
For $i\ge2$, applying \eqref{RR_M0i_Hermite2} yields
$
M_{0,i+1}(x;0)=M'_{0,i}(x;0)=M''_{0,i-1}(x;0).
$
We now prove by induction that $M_{0,i+1}(x;0)=0$ for all $i\ge2$. For $i=2$, we have
$
M_{0,3}(x;0)=M''_{0,1}(x;0)=0.
$
Assume that $M_{0,i+1}(x;0)=0$ for some fixed $i\ge2$. Then
$
M_{0,i+2}(x;0)=M'_{0,i+1}(x;0)=0,
$
which completes the proof of \eqref{M0i_Hermite2}.

We next prove \eqref{Mi1i_Hermite2} and \eqref{Mi2i_Hermite2} by induction.
Applying \eqref{RR_M0i1_Hermite2} with $i=1$, and using \eqref{M01_Hermite2} and \eqref{H1_Hermite2}, we obtain
$
M_{0,2}(x;n)=M'_{0,1}(x;n)+2H_1(x;n)
=2+2\eta_n
=2(\eta_n+1),
$
which is precisely \eqref{Mi2i_Hermite2}, corresponding to the case $i=2$.
Next, applying \eqref{RR_Mk_Hermite2} with $k=i=1$, and using \eqref{M01_Hermite2} together with \eqref{RR_Mii_Hermite2} for $i=1$, we obtain
$
M_{1,2}(x;n)=M_{0,1}(x;n)+M'_{1,1}(x;n)
=2x+0
=2x,
$
which proves \eqref{Mi1i_Hermite2} for $i=2$.
Now assume that \eqref{Mi1i_Hermite2} and \eqref{Mi2i_Hermite2} hold for some fixed $i\ge2$. Applying \eqref{RR_Mk_Hermite2} with $k=i$, we obtain
$
M_{i,i+1}(x;n)
=M_{i-1,i}(x;n)+M'_{i,i}(x;n)
=2x+0
=2x,
$
by the induction hypothesis and \eqref{RR_Mii_Hermite2}. Likewise, applying \eqref{RR_Mk_Hermite2} with $k=i-1$, we get
$
M_{i-1,i+1}(x;n)
=M_{i-2,i}(x;n)+M'_{i-1,i}(x;n)
=2(\eta_n+i-1)+2
=2(\eta_n+i),
$
by the induction hypothesis and \eqref{Mi2i_Hermite2}.

Let us now prove the last formula, \eqref{Miji_Hermite2}. We first consider the case $j=3$, namely $M_{i-3,i}=0$, $i\geq 3$.   It follows directly from \eqref{M0i_Hermite2} with $i=2$. Assume that the statement holds for some fixed $i\geq 3$. We show that it also holds for $i+1$. Setting $k=i-2$ in \eqref{RR_Mk_Hermite2}, we obtain $M_{i-2,i+1} = M_{i-3,i} +  M'_{i-2,i}$.
By the induction hypothesis, $M_{i-3,i}=0$ , while $M'_{i-2,i}=0$ follows from \eqref{Mi2i_Hermite2}. Hence $M_{i-2,i+1} =0$,
which completes the induction for the case $j=3$.

Now,  fix $j\geq 3$, and assume that 
\begin{equation}\label{Mil_Hermite2}
M_{i-l,i}(x;n)=0,\quad 3\leq l \leq j, \quad i\geq l.
\end{equation}
We prove that the statement also holds for $l=j+1$, that is, $M_{i-(j+1),i}=0$ for $i\geq j+1$. For the base case $i=j+1$, we have $M_{0,j+1}=0$, which follows from \eqref{M0i_Hermite2}.
Now assume that $M_{i-(j+1),i}=0$. Setting $k=i+1-(j+1)$ in \eqref{RR_Mk_Hermite2}, we obtain
$M_{i-j,i+1}=M_{i-j-1,i}+M'_{i-j,i}$. The first term vanishes by the induction hypothesis, while the second vanishes by 
\eqref{Mil_Hermite2} with $l=j$. Therefore, $M_{i-j,i+1}=0$, which completes the proof.
%%%%%%%%%%%%%%%%%%%%%%%
\end{proof}

Next, we present the coefficients $G_{0,i}(x;n)$ and $G_{1,i}(x;n)$ for $n\geq 0$ and $i=2,...,9$, obtained by applying the algorithm \textit{HoDELH} with symbolic computations. These results for $i=2,3$, and $4$ were first reported in \cite{Article-1-NA}.

\vspace{0.25cm}
%%%%%%%%%%%%%%%%%%%%%%%%%%%%%%%%%%%%%%%%%%%%%%%
\noindent {$\bullet$ \bf $i=2$} 
%%%%%%%%%%%%%%%%%%%%%%%%%%%%%%%%%%%%%%%%%%%%%%%
 \begin{eqnarray}
&&  G_{0,2}(x;n)  = 2\eta_n\left( 2 x^2-2x \lambda   - (\rho -1)\right),\quad G_{1,2}(x;n)  =2(2 x- \lambda).\notag
\end{eqnarray}

%%%%%%%%%%%%%%%%%%%%%%%%%%%%%%%%%%%%%%%%%%%%%%%
\noindent {$\bullet$ \bf $i=3$} 
%%%%%%%%%%%%%%%%%%%%%%%%%%%%%%%%%%%%%%%%%%%%%%%
\begin{eqnarray}
G_{0,3}(x;n)&= &2\eta_n\left( 4 x^3-4 x^2\lambda   -2 x(\rho -4) -3 \lambda  \right),\notag\\
G_{1,3}(x;n)&= & 4\left(-2 n x^2+2x \lambda  n + (n (\rho -1)+1)\right).\notag
\end{eqnarray}

%%%%%%%%%%%%%%%%%%%%%%%%%%%%%%%%%%%%%%%%%%%%%%%
\noindent {$\bullet$ \bf $i=4$} 
%%%%%%%%%%%%%%%%%%%%%%%%%%%%%%%%%%%%%%%%%%%%%%%
\begin{eqnarray}
G_{0,4}(x;n) & = &4\eta_n\left(
4 x^4-4 x^3\lambda   -2x^2  (n+\rho -7)+x\lambda   (2n-7) + (n (\rho -1)-\rho +5)\right),n\geq 1;\notag\\
G_{1,4}(x;n) & = & 4n\left(-4 x^3+4 \lambda   x^2+2 x(\rho -6) +5\lambda  \right).\notag
\end{eqnarray}

%%%%%%%%%%%%%%%%%%%%%%%%%%%%%%%%%%%%%%%%%%%%%%%
\noindent {$\bullet$ \bf $i=5$} 
%%%%%%%%%%%%%%%%%%%%%%%%%%%%%%%%%%%%%%%%%%%%%%%
\begin{eqnarray}
%G_{0,5}(x;0)  & = & 4\rho\left( 8 x^5-8 \lambda  x^4-4 (\rho -11) x^3-26 \lambda  x^2-2 (3 \rho-19) x-7 \lambda\right), \notag\\
G_{0,5}(x;n)  & = & 4\eta_n\left(8 x^5-8 \lambda  x^4-4 x^3(2 n+\rho -11) +2 \lambda  (4 n-13)x^2 \right. \notag\\
&&\left.+2  ( n( 2\rho-9 )-3 \rho +19)x+7 \lambda  (n-1)\right),\notag\\
G_{1,5}(x;n)  & = &8n\left(-4 x^4+4 \lambda x^3 +2 x^2 (n+\rho -10)-\lambda  (2 n-11) x+n(1- \rho)+2 \rho -11\right).\notag
\end{eqnarray}

%%%%%%%%%%%%%%%%%%%%%%%%%%%%%%%%%%%%%%%%%%%%%%%
\noindent {$\bullet$ \bf $i=6$}
%%%%%%%%%%%%%%%%%%%%%%%%%%%%%%%%%%%%%%%%%%%%%%%
\begin{eqnarray}
%G_{0,6}(x;0)  & = & 8\rho\left( 8 x^6-8 \lambda  x^5-4 (\rho -16) x^4-42 \lambda  x^3-4 (3 \rho -26)
%   x^2-33 \lambda  x-3 \rho +19\right), \notag\\
   %%%%%%%%%%%%%%%%%%%%%%%%%%%%%%%%%%%%%%%%%%%
G_{0,6}(x;n)  & = & 8\eta_n\left(8 x^6-8 \lambda  x^5-4 x^4 (3 n+\rho -16)+6 \lambda (2 n-7) x^3\right.\notag\\
&&+2 x^2 \left(n^2+ n(3 \rho -25)-6 \rho +52\right)-\lambda  \left(2 n^2-26 n+33\right) x\notag\\
   &&\left.-(n-1) (n (\rho -1)-3 \rho +19)\right),\notag\\
   %%%%%%%%%%%%%%%%%%%%%%%%%%%%%%%%%%%%%%%%%%%%
G_{1,6}(x;n)  & = &8n\left(-8 x^5+8 \lambda  x^4+4 x^3 (2 n+\rho -15)-2 \lambda  (4 n-19) x^2\right.\notag\\
&&\left.-2 x ( n(2 \rho-11) -5 \rho +39)-9 \lambda  (n-2)\right).\notag
\end{eqnarray}

%%%%%%%%%%%%%%%%%%%%%%%%%%%%%%%%%%%%%%%%%%%%%%%
\noindent {$\bullet$ \bf $i=7$}
%%%%%%%%%%%%%%%%%%%%%%%%%%%%%%%%%%%%%%%%%%%%%%%
\begin{eqnarray}
%G_{0,7}(x;0)  & = & \rho\left( 9\right), \notag\\
   %%%%%%%%%%%%%%%%%%%%%%%%%%%%%%%%%%%%%%%%%%%
G_{0,7}(x;n)  & = & 8\eta_n\left(16 x^7-16 \lambda  x^6-8 x^5 (4 n+\rho -22)+4 x^3 \left(3 n^2+ n (4\rho -52) -10 \rho +116\right)\right.\notag\\
&&-6 \lambda  \left(2 n^2-21n+32\right) x^2-2 x \left(n^2 (3 \rho -14)+n (109-15 \rho )+3 (5 \rho -41)\right)\notag\\
   &&\left.-11 \lambda  (n-3) (n-1)+4 \lambda  (8n-31) x^4  )\right),\notag\\
   %%%%%%%%%%%%%%%%%%%%%%%%%%%%%%%%%%%%%%%%%%%%
G_{1,7}(x;n)  & = &-16n\left( 8 x^6-8 \lambda  x^5-4 x^4 (3 n+\rho -21)+2 \lambda 
   (6 n-29) x^3\right.\notag\\
   &&+2 x^2 \left(n^2+n(3 \rho -31)-9 \rho +97\right)-\lambda  \left(2 n^2-34
   n+71\right) x\notag\\
   &&\left.-(n-2) (n \rho -n-4 \rho +29) \right).\notag
\end{eqnarray}

%%%%%%%%%%%%%%%%%%%%%%%%%%%%%%%%%%%%%%%%%%%%%%%
\noindent {$\bullet$ \bf $i=8$}
%%%%%%%%%%%%%%%%%%%%%%%%%%%%%%%%%%%%%%%%%%%%%%%
\begin{eqnarray}
%G_{0,8}(x;0)  & = & \rho\left( 9\right), \notag\\
   %%%%%%%%%%%%%%%%%%%%%%%%%%%%%%%%%%%%%%%%%%%
G_{0,8}(x;n)  & = & -16\eta_n\left(-16 x^8+16 \lambda  x^7+8 x^6 (5 n+\rho -29)-4 \lambda  (10 n-43)
   x^5\right.\notag\\
  && -4 x^4 \left(6 n^2+ n(5 \rho -93)-15 \rho +226\right)+8 \lambda  \left(3
   n^2-31 n+55\right) x^3\notag\\
   &&+2 x^2 \left(n^3+6 n^2 (\rho -9)-2 n (18 \rho -181)+3 (15 \rho -157)\right)\notag\\
   &&\left.-\lambda  \left(2 n^3-57 n^2+241 n-225\right) x-(n-3)
   (n-1) (n (\rho -1)-5 \rho +41)  )\right),\notag\\
   %%%%%%%%%%%%%%%%%%%%%%%%%%%%%%%%%%%%%%%%%%%%
G_{1,8}(x;n)  & = &-16n\left( 16 x^7-16 \lambda  x^6-8 x^5 (4 n+\rho -28)+4 \lambda  (8
   n-41) x^4\right.\notag\\
   &&+4 x^3 \left(3 n^2+4 n (\rho -16)-2 (7 \rho -100)\right)\notag\\
   &&-6 \lambda  \left(2 n^2-27n+61\right) x^2-2 x \left(n^2 (3 \rho -16)-3 n (7 \rho -57)+33 \rho -317\right)\notag\\
   &&\left.-13 \lambda  (n-4) (n-2) \right).\notag
\end{eqnarray}

%%%%%%%%%%%%%%%%%%%%%%%%%%%%%%%%%%%%%%%%%%%%%%%
\noindent {$\bullet$ \bf $i=9$}
%%%%%%%%%%%%%%%%%%%%%%%%%%%%%%%%%%%%%%%%%%%%%%%
\begin{eqnarray}
%G_{0,9}(x;0)  & = & \rho\left( 9\right), \notag\\
   %%%%%%%%%%%%%%%%%%%%%%%%%%%%%%%%%%%%%%%%%%%
G_{0,9}(x;n)  & = & -16\eta_n\left( -32 x^9+32 \lambda  x^8+16 x^7 (6
   n+\rho -37)-24 \lambda  (4 n-19) x^6\right.\notag\\
   &&-8 x^5 \left(10 n^2+n (6 \rho -151)-21 \rho +400\right)+20 \lambda  \left(4
   n^2-43 n+87\right) x^4\notag\\
   &&+4 x^3 \left(4 n^3+2 n^2 (5 \rho -71)-2 n (35 \rho -467)+5 (21 \rho -275)\right)\notag\\
   &&-2 \lambda  \left(8 n^3-174 n^2+796
   n-885\right) x^2\notag\\
   &&-2 x \left(n^3 (4 \rho -19)-6 n^2 (7 \rho -54)+8 n (16 \rho -151)-15 (7 \rho -71)\right)\notag\\
   &&\left.-15 \lambda  (n-5) (n-3) (n-1) )\right),\notag\\
   %%%%%%%%%%%%%%%%%%%%%%%%%%%%%%%%%%%%%%%%%%%%
G_{1,9}(x;n)  & = &32n\left(-16 x^8+16 \lambda  x^7+8 x^6 (5 n+\rho -36)-20 \lambda  (2
   n-11) x^5 \right.\notag\\
   &&-4 x^4 \left(6 n^2+n (5 \rho -113)-2 (10 \rho -183)\right)+24 \lambda 
   \left(n^2-13 n+32\right) x^3\notag\\
   &&+2 x^2 \left(n^3+3 n^2 (2 \rho -21)-2 n (24 \rho -277)+3 (29 \rho -357)\right)\notag\\
   &&\left.-\lambda  \left(2 n^3-69 n^2+403 n-591\right)x-(n-4) (n-2) (n (\rho -1)-6 \rho +55) \right).\notag
\end{eqnarray}

%%%%%%%%%%%%%%%%%%%%%%%%%%%%%%%%%%%%%%%%%%%%%%
\subsubsection{Differential equations}
%%%%%%%%%%%%%%%%%%%%%%%%%%%%%%%%%%%%%%%%%%%%%%

Next, we present the reduced coefficients $\widehat{\mathcal{K}}_d(x;n)$, $0\leq d \leq N$, for a differential equation of order $N=5$  and another of order $N=10$, together with the greatest common factor $c(x;n)$ between the coefficients $\mathcal{K}_d(x;n)$,
$0\leq d \leq N$, for Case 2 of the Laguerre-Hahn family analogous to Hermite. These results were obtained by symbolic computations using the algorithm \textit{HoDELH}.

\vspace{0.5cm}

%%%%%%%%%%%%%%%%%%%%%%%%%%%%%%%%%%%%%%%%%%%%%%%
\noindent  {\bf $\bullet$ Fifth-order homogeneous linear differential equation for} 
\newline ${\bf (i_1,i_2,i_3,i_4)=(1,2,3,5=N)}$:
%%%%%%%%%%%%%%%%%%%%%%%%%%%%%%%%%%%%%%%%%%%%%%%

\vspace{0.25cm}

\noindent {\bf Greatest common factor:}
\begin{eqnarray}
 c(x;0)=4 \rho ^2,\quad  c(x;n)= 4n^2,\quad n\geq 1. \notag
\end{eqnarray}

%\vspace{0.25cm}

\noindent  {\bf Reduced coefficients $\widehat{\mathcal{K}}_d(x;n)$, $0\leq d \leq 5=N$, for $n\geq 0$ :}

\begin{eqnarray}
%%%%%%%%%%%%%%%%%%%%%%%%
\widehat{\mathcal{K}}_5(x;n)& = &  -8 (n+1) x^4+4 \lambda  (4 n+3) x^3-4 x^2
   \left(\lambda ^2+2 n \left(\lambda ^2-\rho +1\right)-\rho +3\right)\notag\\
   &&-2 \lambda  x
   (4 n (\rho -1)+\rho -5)-2 n (\rho -1)^2-3 \lambda ^2-2 \rho +2,\notag\\
%\end{eqnarray}
%%%%%%%%%%%%%%%%%%%%%%%%
%\begin{eqnarray}
%%%%%%%%%%%%%%%%%%%%%%%%
\widehat{\mathcal{K}}_4(x;n)& = &-2x\left( 8 (n+1) x^4-4 \lambda  (4 n+3) x^3+4 x^2
   \left(2 n \left(\lambda ^2-\rho +1\right)+\lambda ^2-\rho +3\right)\right.\notag\\
   &&\left.+2 \lambda  x(4 n (\rho -1)+\rho -5)+2 n (\rho -1)^2+3 \lambda ^2+2 \rho -2\right) ,\notag\\
%%%%%%%%%%%%%%%%%%%%%%%%
%\end{eqnarray}
%%%%%%%%%%%%%%%%%%%%%%%%
%\begin{eqnarray}
%%%%%%%%%%%%%%%%%%%%%%%%
\widehat{\mathcal{K}}_3(x;n)& = &-2\left(-16 (n+1) x^6+8 \lambda  (4 n+3) x^5+8 x^4 \left(2 n^2-2 n \left(\lambda ^2-\rho +2\right)-\lambda ^2+\rho -7\right)\right.\notag\\
&&-4 \lambda  x^3 \left(8 n^2+4 n (\rho -2)+\rho -12\right)\notag\\
&&+2 x^2 \left(-5 \lambda ^2+8 n^2 \left(\lambda ^2-\rho +1\right)-2 n (\rho -1)^2-2 \rho
   -14\right)\notag\\
   &&+2 \lambda  x\left(8 n^2 (\rho -1)+4 n (\rho -1)+\rho +6\right) \notag\\
   &&\left.4 n^2 (\rho -1)^2+2 n \left(5 \lambda ^2+2 \rho ^2+6 \rho
   -8\right)+5 \lambda ^2+8 \rho -8\right) ,\notag\\
%%%%%%%%%%%%%%%%%%%%%%%%
%\end{eqnarray}
%%%%%%%%%%%%%%%%%%%%%%%%
%\begin{eqnarray}
%%%%%%%%%%%%%%%%%%%%%%%%
\widehat{\mathcal{K}}_2(x;n)& = & -4\left(-16 (n+1) x^7+8 \lambda  (4 n+3) x^6\right.\notag\\
&&+8 x^5 \left(2 n^2+n \left(-2 \lambda ^2+2 \rho -7\right)-\lambda ^2+\rho -10\right)\notag\\
&&-4 \lambda  x^4\left(8 n^2+2 n (2 \rho -11)+\rho -22\right)\notag\\
&&+2 x^3 \left(8 n^2 \left(\lambda ^2-\rho +2\right)-2 n \left(8 \lambda
   ^2+\rho ^2-10 \rho -7\right)-13 \lambda ^2+4 \rho -16\right)\notag\\
   &&+2 \lambda  x^2 \left(4 n^2 (2 \rho -5)+n (-14 \rho -17)-3 \rho +13\right)\notag\\
   &&+x \left(4 n^2 \left(2 \lambda ^2+\rho ^2-4 \rho +3\right)+2 n \left(13
   \lambda ^2-3 \rho ^2+18 \rho +5\right)-7 \lambda ^2-6 \rho +38\right)\notag\\
   &&\left.+\lambda  \left(4 n^2 (\rho -1)+n (-\rho -19)-7\right)\right) ,\notag\\
%%%%%%%%%%%%%%%%%%%%%%%%
%\end{eqnarray}
%%%%%%%%%%%%%%%%%%%%%%%%
%\begin{eqnarray}
%%%%%%%%%%%%%%%%%%%%%%%%
\widehat{\mathcal{K}}_1(x;n)& = &-4\left( -16 (n+1) x^6+16 \lambda  (3 n+2) x^5\right.\notag\\
&&+8 x^4 \left(n^3-n^2-2 n \left(2 \lambda ^2-2 \rho +5\right)-2 \lambda ^2+\rho -11\right)\notag\\
&&-4 \lambda  x^3 \left(4 n^3-3 n^2+2 n (5 \rho -22)+2 \rho -35\right)\notag\\
&&+4 x^2 \left(2 n^3 \left(\lambda ^2-\rho +1\right)+n^2 \left(-\lambda
   ^2+\rho -3\right)\right.\notag\\
   &&\left.-3 n \left(5 \lambda ^2+\rho ^2-10 \rho +9\right)-13 \lambda ^2+3 \rho
   -19\right)\notag\\
   &&+2 \lambda  x
   \left(4 n^3 (\rho -1)+n^2 (5-\rho )+n (41-30 \rho )-3 (2 \rho -15)\right)\notag\\
   &&\left.+(n+2) \left(2 n^2 (\rho -1)^2+n \left(7 \lambda ^2-4 \rho ^2+26 \rho
   -22\right)-7 \lambda ^2\right)\right),\notag\\
%%%%%%%%%%%%%%%%%%%%%%%%
%\end{eqnarray}
%%%%%%%%%%%%%%%%%%%%%%%%
%\begin{eqnarray}
%%%%%%%%%%%%%%%%%%%%%%%%
\widehat{\mathcal{K}}_0(x;n)& = & -8(n+1)\left( 8 (n+1)^2 x^5-4 \lambda x^4 \left(4n^2+5 n+2\right) \right.\notag\\
&&+4 x^3 \left(2 n^2 \left(\lambda ^2-\rho +3\right)+n \left(\lambda ^2-\rho
   +19\right)-\rho +11\right)\notag\\
   &&+2 \lambda  x^2 \left(4 n^2 (\rho -4)+n (-\rho -34)-13\right)\notag\\
   &&+x \left(2 n^2 \left(4 \lambda ^2+\rho ^2-6 \rho +5\right)+n \left(19 \lambda ^2-2
   \rho ^2+10 \rho +32\right)-2 (3 \rho -19)\right)\notag\\
   &&\left.+\lambda  \left(4 n^2 (\rho -1)+n (-\rho -19)-7\right)\right),\notag
%%%%%%%%%%%%%%%%%%%%%%%%
\end{eqnarray}

\vspace{0.25cm}

%%%%%%%%%%%%%%%%%%%%%%%%%%%%%%%%%%%%%%%%%%%%%%%
%%%%%%%%%%%%%%%%%%%%%%%%%%%%%%%%%%%%%%%%%%%%%%%
\noindent  {\bf $\bullet$ Tenth-order homogeneous linear differential equations for} 
\newline${\bf (i_1,i_2,i_3,i_4)=(1,8,9,10=N)}$:
%%%%%%%%%%%%%%%%%%%%%%%%%%%%%%%%%%%%%%%%%%%%%%%
%%%%%%%%%%%%%%%%%%%%%%%%%%%%%%%%%%%%%%%%%%%%%%%
\vspace{0.25cm}

\noindent {\bf Greatest common factor:}
\begin{eqnarray}
c(x;n)= 256 n^3 \left(n^4-10 n^3+35 n^2-50 n+24\right),\ n\geq 0. \notag
\end{eqnarray}

%\vspace{0.25cm}

\noindent  {\bf Reduced coefficients $\widehat{\mathcal{K}}_d(x;n)$, $0\leq d \leq 10=N$, for $n\geq 0$ :}

\begin{eqnarray}
%%%%%%%%%%%%%%%%%%%%%%%%
\widehat{\mathcal{K}}_{10}(x;n)& = &  -8 (n+1)^2 x^4+4 \lambda  \left(4 n^2-5 n-3\right)
   x^3\notag\\
   &&-4 x^2 \left(2 n^2 \left(\lambda ^2-\rho +1\right)+n \left(-9 \lambda ^2+9
   \rho +101\right)-5 \lambda ^2-31 \rho +57\right)\notag\\
   &&-2 \lambda  x \left(4 n^2 (\rho -1)+n (-31 \rho -189)+55 \rho +433\right)\notag\\
   &&-2 n^2 (\rho -1)^2+n \left(-195 \lambda ^2+22 \rho ^2-214 \rho
   +192\right)\notag\\
   &&+975 \lambda ^2-60 \rho ^2+1042 \rho -4510,\notag\\
%   \end{eqnarray}
%%%%%%%%%%%%%%%%%%%%%%%%
%%%%%%%%%%%%%%%%%%%%%%%%
%\begin{eqnarray}
\widehat{\mathcal{K}}_9(x;n)& = &2\left(16 (n+1)^2 x^3+-6 \lambda  \left(4 n^2-5 n-3\right) x^2\right.\notag\\
&&+4 x \left(2 n^2 \left(\lambda ^2-\rho +1\right)+n \left(-9 \lambda ^2+9
   \rho +101\right)-5 \lambda ^2-31 \rho +57\right)\notag\\
   &&\left.+\lambda  \left(4 n^2 (\rho -1)+n (-31 \rho -189)+55 \rho +433\right) \right) ,\notag\\
%      \end{eqnarray}
%%%%%%%%%%%%%%%%%%%%%%%%
%%%%%%%%%%%%%%%%%%%%%%%%
%\begin{eqnarray}
\widehat{\mathcal{K}}_8(x;n)& = &-2\left(-16 (n+1)^2 x^6+8 \lambda  \left(4 n^2-5 n-3\right) x^5\right.\notag\\
&&
   +8x^4 \left(2 n^3-2 n^2 \left(\lambda ^2-\rho -1\right)+9 n \left(\lambda
   ^2-\rho -11\right)+5 \lambda ^2+31 \rho -577\right)\notag\\
   && -4 \lambda  x^3 \left(8 n^3+4 n^2 (\rho -3)+n (-31 \rho -191)+5 (11 \rho +87)\right)\notag\\
  &&  +2 x^2 \left(8 n^3 \left(\lambda ^2-\rho +1\right)-2 n^2 \left(16 \lambda
   ^2+\rho ^2-18 \rho -235\right)\right.\notag\\
   &&\left.+n \left(-233 \lambda ^2+22 \rho ^2-346
   \rho +556\right)+965 \lambda ^2-60 \rho ^2+1038 \rho -44422\right)\notag\\
   && +2 \lambda  x \left(8 n^3 (\rho -1)-56 n^2 (\rho +8)+n (70 \rho +851)+50 \rho -11\right)\notag\\
   && 
  +4 n^3 (\rho -1)^2+2 n^2 \left(225 \lambda ^2-20 \rho ^2+238
   \rho -218\right)\notag\\
   &&+n \left(-2025 \lambda ^2+76 \rho ^2-2128 \rho
   +11852\right)\notag\\
 &&  \left.  -1125 \lambda ^2+120 \rho ^2-1268 \rho +1540 \right) ,\notag\\
 %   \end{eqnarray}
%%%%%%%%%%%%%%%%%%%%%%%%
%%%%%%%%%%%%%%%%%%%%%%%%
%\begin{eqnarray}
\widehat{\mathcal{K}}_7(x;n)& = & 4\left(104 (n+1)^2 x^5   -8 \lambda  \left(27 n^2-37 n-22\right) x^4\right.\notag\\
&& +4 x^3 \left(4 n^3+4 n^2 \left(7 \lambda ^2-7 \rho +9\right)+n \left(-126
   \lambda ^2+139 \rho +1515\right) \right.\notag\\
   &&\left. -70 \lambda ^2-463 \rho +853\right)\notag\\
   &&-2 \lambda  x^2 \left(12 n^3+n^2 (75-58 \rho )+24 n (19 \rho +120)-830 \rho -7011\right)\notag\\
   &&+2 x \left(4 n^3 \left(\lambda ^2-\rho +1\right)+n^2 \left(-16 \lambda
   ^2+15 \rho ^2-72 \rho +277\right)\right.\notag\\
   &&+n \left(1541 \lambda ^2-165 \rho
   ^2+1923 \rho -1318\right)\notag\\
   && \left.-7805 \lambda ^2+450 \rho ^2-8719 \rho +38477\right)\notag\\
   &&\left.+\lambda  (n+8) \left(4 n^2 (\rho -1)+n (-31 \rho -189)+55 \rho +433\right)
\right),\notag\\
%   \end{eqnarray}
%%%%%%%%%%%%%%%%%%%%%%%%
%%%%%%%%%%%%%%%%%%%%%%%%
%\begin{eqnarray}
\widehat{\mathcal{K}}_6(x;n)& = &-4 (n-5) (n+7)\left( 8 (n+1)^2 x^4-4 \lambda  \left(4 n^2-7
   n-4\right) x^3\right.\notag\\
   &&+4 x^2 \left(2 n^2 \left(\lambda ^2-\rho +1\right)+n \left(-11 \lambda
   ^2+11 \rho +131\right)-6 \lambda ^2-43 \rho +73\right)\notag\\
   &&+2 \lambda  x
   \left(4 n^2 (\rho -1)+n (-37 \rho -247)+78 \rho +695\right)\notag\\
   &&+2 n^2 (\rho -1)^2+n
   \left(255 \lambda ^2-26 \rho ^2+278 \rho -252\right)\notag\\
   &&\left.-2 \left(765 \lambda ^2-42 \rho ^2+811 \rho -3905\right)\right) ,\notag\\
%%%%%%%%%%%%%%%%%%%%%%%%
%%%%%%%%%%%%%%%%%%%%%%%%
\widehat{\mathcal{K}}_i(x;n)& = &0,\ i=0,...,5.\notag
%%%%%%%%%%%%%%%%%%%%%%%%
\end{eqnarray}

%%%%%%%%%%%%%%%%%%%%%%%%%%

\section*{Conclusions}
The main achievement of this work is to show that the method developed in \cite{Article-1-NA} for deriving the fourth-order differential equation of Laguerre--Hahn orthogonal polynomials is not an isolated result, but the first term of a complete hierarchy of differential equations of arbitrary order. This reveals a recursive structure inherent to the Laguerre--Hahn class, which had remained unnoticed until now. The case $N=4$, which has been the subject of numerous studies over the past decades, appears here as a particular instance of a much more general phenomenon.

From an algorithmic point of view, the method is fully constructive and its implementation in {\it Mathematica$^{\circledR}$}  makes it possible to compute these equations for any family of the Laguerre--Hahn class. The applications presented in this paper, particularly the results for the class-zero Hermite-analogous families and the classical Hermite sequence, demonstrate the practical relevance and efficiency of the approach. 

Further work is currently in progress to illustrate the method on other families and to explore the algebraic structure of the hierarchy.

In conclusion, this work establishes a new and unifying perspective on the differential equations of Laguerre--Hahn orthogonal polynomials, showing that the theory naturally extends to a complete hierarchy of equations of all orders, thereby opening new directions for both theoretical and computational research in the field.

%%%%%%%%%%%%%%%%%%%%%%%%%%%%%%%%%%%%%%%%%%%%%%%%%%%%

%%%%%%%%%%%%%%%%%%%%%%%%%%%%%%%%%%%%%%%%%%%%%%%%%%%%%%%%%%%%%%%%

\section*{Declarations}

\noindent  {\bf Data Availability} No datasets were generated or analysed during the current study.

\noindent  {\bf Conflicts of Interest} The authors have no conflicts of interest to declare.

\noindent  {\bf Competing interests} The authors declare no competing interests.

\noindent {\bf Funding:} The third author, Zélia da Rocha,  was partially supported by CMUP, a member of LASI, which is financed by national funds through FCT -- Funda\c c\~ao para a Ci\^encia e a Tecnologia, I.P., under the projects with reference UID/00144/2025 and associated DOI given by \url{https://doi.org/10.54499/UID/00144/2025}.

%\noindent {\bf Acknowledgment:} The authors would like to express their sincere gratitude to the referee for the careful and thorough review of the manuscript. The referee’s constructive comments have contributed to improving the paper.

\bibliographystyle{plain}
\bibliography{sn-bibliography-4oDELH}

%%%%%%%%%%%%%%%%%%%%%%%%%%%%%%%%%%%%%%%%%%%%%%%%%%%%%%%%
%%%%%%%%%%%%%%%%%%%%%%%%%%%%%%%%%%%%%%%%%%%%%%%%%%%%%%%%
\end{document}